\DeclareOldFontCommand{\sc}{\normalfont\scshape}{\@nomath\sc}
\newtheorem*{resumo}{Abstract}
\newtheorem{propp}{Proposition}
\newtheorem*{thmII}{Theorem}
\newtheorem*{thmGao}{Proposition (Shuhong Gao)}
\newtheorem{theorem}{Theorem}
\newtheorem{pro}{Problem}
\newtheorem{thm}{Theorem}[section]
\newtheorem{lemma}[thm]{Lemma}
\newtheorem{cor}[thm]{Corollary}
\newtheorem{prop}[thm]{Proposition}
\newtheorem{OBS}[thm]{Remark}
\newtheorem{ex}[thm]{Example}
\newtheorem{dfn}[thm]{Definition}
\DeclareMathOperator{\corpo}{k}
\DeclareMathOperator{\sing}{sing}
\DeclareMathOperator{\Aut}{Aut}
\DeclareMathOperator{\Div}{Div}
\DeclareMathOperator{\ord}{ord}
\DeclareMathOperator{\h}{H^{0}}
\DeclareMathOperator{\spm}{\textbf{Spm}}
\DeclareMathOperator{\Projdois}{\mathbb{P}_{k}^{2}}
\title{Arithmetic aspects of the Jouanolou foliation}
\author{Wodson  Mendson}
\def\keywords{\xdef\@thefnmark{}\@footnotetext}
\begin{document}



\maketitle
\keywords{2010 \emph{Mathematics Subject Classification.} 32S65; 13N15; 13A35}%
    \keywords{\emph{Keywords.} Foliations; Foliations over positive characteristic; Reduction modulo $p$; Jouanolou foliation; $p$-divisor}%
     \footnotetext{This work was developed at Université de Rennes 1. W. Mendson acknowledges support of CAPES}

\begin{resumo} \normalfont We investigate the structure  of the $p$-divisor for the Jouanolou foliation where we show, under some conditions, that it can be irreducible or has a $p$-factor. We study the reduction modulo $p$ of foliations on the projective plane and its applications to the problems of holomorphic foliations. We give new proof, via reduction modulo $2$, of the fact that the Jouanolou foliation on the complex projective plane of odd degree, under some arithmetic conditions, has no algebraic solutions.
\end{resumo}

\setcounter{tocdepth}{1}
\tableofcontents

\section{Introduction}

On the work \cite{mendson2022foliations} the author introduces the notion of the $p$-divisor for foliations on a smooth algebraic surface defined over a field characteristic $p>0$. This $p$-divisor appears as the degeneration divisor of the curvature morphism: this morphism takes a local section of the tangent bundle of the foliation and send it to its $p$-power.  The notion of $p$-divisor has been shown to be relevant to undestand the nature of the algebraic curves that are invariant by the foliation with applications to the study of codimension one holomorphic foliations on projective spaces. For instance, the struture of the $p$-divisor for generic foliations on $\mathbb{P}_{\corpo}^1\times \mathbb{P}_{\corpo}^1$ was a important step in the construction of new irreducible components of the space of codimension one holomorphic foliations on projective spaces done in the work \cite{mendson2022codimension} (see \cite[Theorem B]{mendson2022foliations} and \cite[Theorem 11.7]{mendson2022codimension}).

A fundamental property is the following: any algebraic invariant curve should appear in the support of the $p$-divisor and conversely, any prime divisor on the support of the $p$-divisor with multiplicity not zero modulo $p$ should be an invariant curve (see \cite[Proposition 3.8]{mendson2022foliations}).

In the study of the $p$-divisor appears naturally the following questions:

\begin{pro}\label{problem11} What we can say about the structure of the $p$-divisor for generic foliations? Is it irreducible, reduced?
\end{pro}

\begin{pro}\label{problem22} How many algebraic solutions can a foliation on a smooth projective surface have?
\end{pro}

A simple instance of the Problem \ref{problem22} occurs when we consider $p$-closed foliations. In this case, there are infinitely many solutions and the $p$-divisor is the zero divisor. In the work \cite{mendson2022foliations}, the author
gives an answer to the Problem \ref{problem11} in the case of foliations on the projective plane and in the Hirzebruch surfaces. It is explored the structure of the $p$-divisor where it is shown, under some conditions, that it is reduced, generically. On the projective plane, this is the following theorem (see \cite[Theorem A]{mendson2022foliations}).

\begin{thmII} Let $\corpo$ be a field of characteristic $p>0$. A generic foliation on the projective
plane $\mathbb{P}_{\corpo}^2$ of degree $d\geq 1$ with $p\nmid d-1$ has reduced $p$-divisor.
\end{thmII}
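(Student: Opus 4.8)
The plan is to reduce the statement to the construction of a single well-chosen foliation, using that reducedness is an open condition on the space of foliations. Degree-$d$ foliations on $\mathbb{P}^2_{k}$ form an open subset $U$ of the projective space $\mathbb{P}(H^{0}(\mathbb{P}^2_{k}, T\mathbb{P}^2_{k}(d-1)))$, and for each $\mathcal{F}\in U$ the curvature morphism $\psi_{\mathcal{F}}\colon T_{\mathcal{F}}^{\otimes p}\to N_{\mathcal{F}}$ is a global section of the fixed line bundle $N_{\mathcal{F}}\otimes T_{\mathcal{F}}^{-p}=\mathcal{O}_{\mathbb{P}^2}(p(d-1)+d+2)$, whose zero locus is the $p$-divisor $\Delta_{\mathcal{F}}$. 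Since $\psi_{\mathcal{F}}$ is obtained from $\mathcal{F}$ by iterating the defining derivation $p$ times and taking a determinant against the radial vector field, the homogeneous polynomial $\Psi_{\mathcal{F}}$ of degree $N:=p(d-1)+d+2$ cutting out $\Delta_{\mathcal{F}}$ has coefficients that are polynomial in the coefficients of $\mathcal{F}$. Thus $\mathcal{F}\mapsto\Psi_{\mathcal{F}}$ is a morphism from $U$ to $H^{0}(\mathbb{P}^2_{k},\mathcal{O}(N))$, and the locus of non-squarefree forms is closed (it is the image of the multiplication map sending $(g,h)$ to $g^{2}h$). Hence $\{\mathcal{F}:\Delta_{\mathcal{F}}\text{ reduced}\}$ is open, and it remains to prove it is nonempty.

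First I would record that the generic foliation is not $p$-closed, so that $\psi_{\mathcal{F}}\not\equiv 0$ and $\Delta_{\mathcal{F}}$ really is a divisor; this is again an open condition, so it suffices to produce a single non-$p$-closed example. The heart of the argument is therefore to exhibit one foliation $\mathcal{F}_{0}$ of degree $d$ whose $p$-divisor is reduced. For this I would choose $\mathcal{F}_{0}$ with enough structure to make the $p$-th iterate $v^{[p]}$ of its defining vector field computable in closed form---for instance a logarithmic (Darboux) foliation $\omega=\prod_{i}\ell_{i}\cdot\sum_{i}\lambda_{i}\,d\ell_{i}/\ell_{i}$ associated to $d+2$ lines $\ell_{i}$ in general position with $\sum_{i}\lambda_{i}=0$, or a triangular/monomial model for which $v^{[p]}$ can be evaluated via the Jacobson--Hochschild formulas. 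For such a model one computes $\Psi_{\mathcal{F}_{0}}$ explicitly: the invariant lines contribute components whose multiplicities are governed by the residues $\lambda_{i}$ modulo $p$, and the residual factor can be checked to be squarefree and coprime to the invariant lines by a direct discriminant computation.

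The main obstacle is the explicit evaluation of $v^{[p]}$ and the verification that $\Psi_{\mathcal{F}_{0}}$ is squarefree; the delicate point is controlling the multiplicity with which the distinguished lines (the invariant lines of $\mathcal{F}_{0}$, or the line at infinity in an affine model) occur in $\Delta_{\mathcal{F}_{0}}$. This is exactly where the hypothesis $p\nmid d-1$ is used: it forces these multiplicities to be coprime to $p$---more precisely, it prevents the curvature section from acquiring a forced $p$-th-power factor coming from the weight $d-1$ of $v$ under the radial field---so that, after choosing the parameters $\lambda_{i}$ (respectively the coefficients of the monomial model) generically subject to the descent constraint, every component of $\Delta_{\mathcal{F}_{0}}$ appears with multiplicity one. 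Once one such $\mathcal{F}_{0}$ is in hand, the openness established above upgrades it to the generic statement.
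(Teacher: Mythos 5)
Note first that this paper does not actually prove the quoted theorem: it is imported verbatim from \cite[Theorem A]{mendson2022foliations}, so the only comparison available is with the strategy of that work, whose shape is visible in the Remark following Proposition \ref{local_global} (an explicit foliation with four invariant lines whose $p$-divisor is $l_1+l_2+l_3+l_\infty+C_{\mathfrak{p}}$ with $C_{\mathfrak{p}}$ irreducible of degree $p$, citing \cite[Proposition 4.3]{mendson2022foliations}). Your reduction to ``openness plus one example'' is correct and is indeed the expected architecture: the coefficients of $\Psi_{\mathcal{F}}$ are polynomial in those of $\mathcal{F}$, the non-squarefree forms of degree $p(d-1)+d+2$ form a closed cone containing $0$ (so squarefreeness of $\Psi_{\mathcal{F}}$ is open and simultaneously excludes the $p$-closed locus), and irreducibility of the parameter space upgrades a nonempty open set to a dense one. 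All of that is sound, if routine.

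The genuine gap is that the nonemptiness step --- which is the entire mathematical content of the theorem --- is asserted rather than proved. You name two candidate families (logarithmic models, monomial/triangular models) and state that $v^{[p]}$ ``can be evaluated'' and the residual factor ``can be checked to be squarefree\ldots by a direct discriminant computation,'' but neither computation is carried out for a single $d$, let alone uniformly in $(d,p)$ with $p\nmid d-1$. Note how strong the required conclusion is: by \cite[Proposition 3.8]{mendson2022foliations}, every component of $\Delta_{\mathcal{F}}$ of multiplicity prime to $p$ is invariant, so a foliation with \emph{reduced} $p$-divisor must carry invariant curves of total degree $p(d-1)+d+2$; for your logarithmic model this means producing, beyond the $d+2$ lines, residual invariant curves of total degree $p(d-1)$ each occurring with multiplicity exactly one --- precisely the hard verification you defer. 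Moreover, your proposed mechanism for the hypothesis $p\nmid d-1$ (that $p\mid d-1$ would force a $p$-th-power factor of the curvature section via the weight of $v$) is refuted inside this very paper: for $d=3$, $p=2$ one has $p\mid d-1$, yet Proposition \ref{2Jou} together with Corollary \ref{2divisor} shows that the Jouanolou foliation $\mathcal{J}_3$ has irreducible, hence reduced, $2$-divisor of degree $9=p(d-1)+d+2$; combined with your own openness argument this would even give generic reducedness for $(d,p)=(3,2)$. So $p\nmid d-1$ cannot play the structural role you assign it --- it is a hypothesis internal to whatever explicit model is computed in \cite{mendson2022foliations} --- and until the model computation is actually performed, the proof is incomplete at its decisive step.
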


In this work, we investigate the struture of the $p$-divisor for the \textbf{Jouanolou foliations} where we give results in the direction of the Problem \ref{problem11} and Problem \ref{problem22}. The \textbf{Jouanolou foliation} of degree $d\geq 2$, $\mathcal{J}_d$, on the
projective plane over a field $K$ is given by the planar vector field:
$$
v_d =  (xy^d-1)\partial_x-(x^d-y^{d+1})\partial_y.
$$

The interesting fact about $\mathcal{J}_d$ is that when $K = \mathbb{C}$ the foliation $\mathcal{J}_d$ has no algebraic solutions for every $d\in\mathbb{Z}_{>1}$. This was proved by Jouanolou in his monograph \cite{MR537038}. On the other hand, when $K$ has characteristic $p>0$ it follows from \cite[Corollary 3.8]{mendson2022foliations} that the Jouanolou foliation has an algebraic solution for every $d\in \mathbb{Z}_{>1}$ such that $p\nmid d+2$. The main result is the following theorem.

\begin{theorem}\label{TeoremaA} Let $\corpo$ be an algebraically closed field of characteristic $p>0$. Let $d$ be a positive integer such that
\begin{itemize}
    \item $p<d$ and $p\not\equiv 1\mod 3$;

    \item $d^2+d+1$ is prime.
\end{itemize}
Then the Jouanolou foliation $\mathcal{J}_d$ on $\mathbb{P}_{\corpo}^2$ has irreducible $p$-divisor \textbf{or} $\Delta_{\mathcal{J}_d} = C+pR$ with $(\mathcal{J}_d,C)$ special and $R = \sum_{i}\alpha_i P_i$ with $P_i$ not $\mathcal{J}_d$-invariant for every $i$.
\end{theorem}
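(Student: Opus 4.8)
The plan is to combine the large automorphism group of $\mathcal{J}_d$ with the degree of the $p$-divisor and Proposition 3.8. First I would record that, since $\mathcal{J}_d$ has degree $d$, its tangent sheaf is $T_{\mathcal{J}_d}=\mathcal{O}(1-d)$ and its normal sheaf is $N_{\mathcal{J}_d}=\mathcal{O}(d+2)$; as the $p$-divisor is the vanishing divisor of the curvature morphism $T_{\mathcal{J}_d}^{\otimes p}\to N_{\mathcal{J}_d}$, it is the zero locus of a section of $N_{\mathcal{J}_d}\otimes T_{\mathcal{J}_d}^{\otimes(-p)}$, so $\deg\Delta_{\mathcal{J}_d}=(d+2)+p(d-1)$. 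Next I would recall the symmetry group: writing $q=d^2+d+1$ and fixing a primitive $q$-th root of unity $\zeta\in\corpo$ (which exists since $\mdc(p,q)=1$, as $q$ is prime and $q>d>p$), the diagonal map $\sigma\colon(x:y:z)\mapsto(\zeta^{-d}x:\zeta y:z)$ has order $q$ and preserves $\mathcal{J}_d$, while the order-$3$ cyclic coordinate permutation $\tau$ also preserves it; together they generate a group $G$ of order $3q$. Because $\Delta_{\mathcal{J}_d}$ is canonically attached to $\mathcal{J}_d$, the group $G$ preserves $\Delta_{\mathcal{J}_d}$ and permutes its irreducible components.

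The key lever is a degree count against the prime $q$. Since $p<d$ and $d\ge 2$, one has $\deg\Delta_{\mathcal{J}_d}=p(d-1)+d+2\le(d-1)^2+d+2=d^2-d+3<d^2+d+1=q$. The cyclic group $\langle\sigma\rangle\cong\mathbb{Z}/q$ acts on the set of components of $\Delta_{\mathcal{J}_d}$ with orbits of size $1$ or $q$ (here the primality of $q$ is essential); a single orbit of size $q$ would already contribute degree at least $q>\deg\Delta_{\mathcal{J}_d}$, which is impossible. Hence every irreducible component of $\Delta_{\mathcal{J}_d}$ is $\sigma$-invariant. I would then classify $\sigma$-invariant curves: a curve $\{F=0\}$ is $\sigma$-invariant iff $F$ is a $\sigma$-semi-invariant, i.e. all monomials $x^iy^jz^k$ of $F$ share one value of $-di+j\pmod q$; a short estimate shows that for degree $\le d$ this forces $F$ to be a single monomial, so the only such curves are the coordinate lines, none of which is $\mathcal{J}_d$-invariant (indeed $v_d(x)=xy^d-1$ and $v_d(y)=y^{d+1}-x^d$ do not vanish on $x=0$, $y=0$).

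I would then split $\Delta_{\mathcal{J}_d}=\sum_i m_iC_i$ according to whether $C_i$ is $\mathcal{J}_d$-invariant. By Proposition 3.8 every non-invariant component satisfies $p\mid m_i$, so these components assemble into a term $pR$ with $R=\sum_i\alpha_iP_i$ and each $P_i$ not $\mathcal{J}_d$-invariant, exactly as in the statement. It remains to analyse the invariant components, which are genuine algebraic solutions of $\mathcal{J}_d$ that are simultaneously $\sigma$-invariant, and whose reduced sum is a $G$-invariant curve of degree $<q$. Using that $\tau$ permutes these finitely many invariant components in orbits of size $1$ or $3$, and that the $q$ singular points of $\mathcal{J}_d$ form a single $\langle\sigma\rangle$-orbit, I would argue that there is at most one invariant component, that it must be $\tau$-fixed, and that it occurs with multiplicity one, yielding the special pair $(\mathcal{J}_d,C)$; if instead $\Delta_{\mathcal{J}_d}$ is supported on a single prime we are in the irreducible alternative of the dichotomy.

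The hard part will be this last step: controlling the $\mathcal{J}_d$-invariant part, namely (i) ruling out two or more distinct invariant components and (ii) showing the surviving component is irreducible and enters with multiplicity one. Both are where the hypothesis $p\not\equiv1\pmod 3$ enters: since $q\equiv1\pmod3$, the order-$3$ map $\tau$ fixes at least one singular point, and $p\not\equiv1\pmod 3$ is what forces $\tau$ to act with the correct fixed-point behaviour on the singular locus while keeping the candidate $\tau$-fixed invariant curve irreducible rather than splitting off cube-root factors. Concretely, I expect to relate the multiplicity of a would-be invariant component $C$ to the Camacho--Sad data of $\mathcal{J}_d$ along $C$ at the symmetric singular points, and to use $p\not\equiv1\pmod 3$ together with $\mdc(p,q)=1$ to pin these indices into $\corpo$ in the pattern characterising a special pair. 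Converting this local index bookkeeping at the $G$-orbit of singularities into the clean global conclusion $\Delta_{\mathcal{J}_d}=C+pR$ is the crux of the argument.
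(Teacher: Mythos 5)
Several pieces of your plan are correct and even elegant: the degree formula $\deg\Delta_{\mathcal{J}_d}=p(d-1)+d+2$, the observation that $p<d$ gives $\deg\Delta_{\mathcal{J}_d}\le d^2-d+3<q:=d^2+d+1$, so that primality of $q$ forces every irreducible component of $\Delta_{\mathcal{J}_d}$ to be fixed by the order-$q$ automorphism (this repackages the degree count of Proposition \ref{autbao}), and the use of \cite[Proposition 3.8]{mendson2022foliations} to put $p\mid m_i$ on the non-invariant components. But there are two genuine gaps, and they sit exactly where you defer. First, you never show that $\mathcal{J}_d$ is not $p$-closed, so $\Delta_{\mathcal{J}_d}$ is not yet defined in your argument; and you misattribute the hypothesis $p\not\equiv 1\bmod 3$. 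It has nothing to do with fixed points of the order-$3$ permutation $\tau$ --- $\tau$ plays no role whatsoever in the paper's proof. In the paper (Lemma \ref{Jounaopfechada}) all singularities of $\mathcal{J}_d$ share one eigenvalue $\alpha$ satisfying, by Baum--Bott, $(d+2)^2=(d^2+d+1)(\alpha+\alpha^{-1}+2)$, whose discriminant is $-3d^2(d+2)^2/(d^2+d+1)^2$; quadratic reciprocity then says $\alpha\bmod p\notin\mathbb{F}_p$ precisely when $p\not\equiv 1\bmod 3$. This gives simultaneously that $\mathcal{J}_d$ is not $p$-closed \emph{and} that every singularity is $p$-reduced, which is the standing hypothesis feeding the whole special-pair machinery (Proposition \ref{speciallll}, Corollary \ref{grauspecial}, Proposition \ref{chavechave}). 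Without it your proposal has no foothold at all.

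Second, the step you yourself flag as ``the crux'' --- showing the invariant part is a single component $C$ forming a special pair, with the rest assembling into $pR$ --- is left as an unexecuted Camacho--Sad sketch, and the mechanism you guess at is not the one that works. The paper's argument is: for an invariant curve $C=\{F=0\}$ of degree $<\deg\Delta_{\mathcal{J}_d}$ fixed by a power $\Phi^l$ (and by your orbit count every component is so fixed), apply $\Phi^l$ to the relation $dF\wedge\omega=F\bigl((\deg(C)/(d+2))\,d\omega+i_R\beta\bigr)$ to get $(\Phi^l)^{*}\beta=\gamma^{l}\beta$; writing $\beta=b\,dx\wedge dy\wedge dz$ with $b$ homogeneous of degree $d-1$, each surviving monomial forces $q\mid dj-i+j+d$ with $0<dj-i+j+d<q$, impossible since $q$ is prime --- this, not the orbit-size count, is the decisive use of primality, and it yields $\beta=0$, i.e.\ $(\mathcal{J}_d,C)$ is \emph{special} (the non-fixed case being excluded by Proposition \ref{autbao} together with Remark \ref{retasJOU}). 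Once specialness is in hand, Proposition \ref{speciallll} gives $\sing(\mathcal{J}_d)\subset\sing(C)$ with $m_q(C)=2$ at every singular point, while $p$-reducedness gives $m_q(\Delta_{\mathcal{J}_d})=2$; hence no second invariant component (nor multiplicity $\ge 2$ for $C$) can occur, the $P_i$ are non-invariant, and \cite[Proposition 3.8]{mendson2022foliations} gives $p\mid\alpha_i$. So your proposal reproduces the easy outer shell of Theorem \ref{irredutibilidade} but stops short of its core: without the semi-invariance argument killing $\beta$, you have neither the special pair nor the multiplicity control, and your planned index bookkeeping at the singular orbit is a hope, not a proof.
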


The notion of special pair $(\mathcal{J}_d,C)$ is introduced in the Section \ref{automorfismoIIII} (see Definition \ref{parspecial}). Under some arithmetic conditions on the characteristic of $\corpo$ we prove that the $p$-divisor of the Jouanolou foliation of \textbf{degree two} is always irreducible.

\begin{theorem} \label{TeoremaB} Let $p>2$ be a prime number such that $7\nmid p+4$ and such that $p\not\equiv 1\mod 3$. Then, the Jouanolou foliation of degree two, $\mathcal{J}_2$, defined over a field of characteristic $p$ has irreducible $p$-divisor.
\end{theorem}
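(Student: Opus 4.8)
The plan is to combine the degree of the $p$-divisor with the large automorphism group of $\mathcal{J}_2$, and then to eliminate the second alternative of the dichotomy of Theorem~\ref{TeoremaA} by a degree count special to $d=2$. First I would record the numerics: for a degree $d$ foliation on $\mathbb{P}^2_{\corpo}$ one has $T_{\mathcal{F}}=\mathcal{O}(1-d)$ and $N_{\mathcal{F}}=\mathcal{O}(d+2)$, and the $p$-divisor is the zero scheme of the $\mathcal{O}$-linear curvature morphism $F^{*}T_{\mathcal{F}}\to N_{\mathcal{F}}$, hence a section of $(F^{*}T_{\mathcal{F}})^{\vee}\otimes N_{\mathcal{F}}=\mathcal{O}\bigl(p(d-1)+d+2\bigr)$, so $\deg\Delta_{\mathcal{J}_2}=p+4$. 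Since $\Delta_{\mathcal{J}_2}$ is canonically attached to the foliation it is invariant under $\Aut(\mathcal{J}_2)$, which by Section~\ref{automorfismoIIII} contains the group $G=\langle\sigma,\tau\rangle$ of order $3(d^2+d+1)=21$, where $\sigma\colon[x:y:z]\mapsto[x:\lambda y:\lambda^{3}z]$ with $\lambda^{7}=1$ primitive and $\tau\colon[x:y:z]\mapsto[y:z:x]$, satisfying $\tau\sigma\tau^{-1}=\sigma^{4}$; this is the non-abelian group $\mathbb{Z}/7\rtimes\mathbb{Z}/3$, and $\mathbb{P}^2_{\corpo}=\mathbb{P}(V)$ with $V$ an irreducible three-dimensional $G$-representation, so $G$ fixes no point and no line.

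Next I would obtain the dichotomy. The features of Theorem~\ref{TeoremaA} that actually drive its proof, namely that $d^2+d+1=7$ is prime and $p\not\equiv1\bmod 3$, are in force here; only the inequality $p<d$ fails, and its role is confined to the degree bookkeeping of the non-irreducible case. Running the same argument for $d=2$ therefore yields that either $\Delta_{\mathcal{J}_2}$ is irreducible, in which case we are done, or $\Delta_{\mathcal{J}_2}=C+pR$ with $(\mathcal{J}_2,C)$ special in the sense of Definition~\ref{parspecial} and $R=\sum_i\alpha_iP_i$ with every $P_i$ non-$\mathcal{J}_2$-invariant.

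The heart of the proof is to eliminate the second alternative using the smallness of $\deg\Delta_{\mathcal{J}_2}=p+4$. Because the splitting $\Delta_{\mathcal{J}_2}=C+pR$ is canonical, with $C$ the reduced invariant part and $pR$ the remainder, both $C$ and $R$ are $G$-invariant divisors. If $R\neq0$ then its support is a non-empty union of $G$-orbits of prime divisors; since $G$ fixes no line, an orbit of a line has size at least $3$ and any $G$-invariant prime has degree at least $2$, so $\deg R\geq 2$. Then $\deg C=(p+4)-p\deg R\leq 4-p<0$ for $p\geq5$, a contradiction, while the degenerate case $p=3$ is excluded by the hypothesis $7\nmid p+4$ (indeed $p+4=7$). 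Hence $R=0$ and $\Delta_{\mathcal{J}_2}=C$ is reduced and $\mathcal{J}_2$-invariant of degree $p+4$. It remains to prove that $C$ is irreducible: decomposing $C$ into $G$-orbits, the condition $7\nmid\deg C=p+4$ forces an orbit of size $1$ or $3$, i.e. a prime component fixed by the order-$7$ element $\sigma$. I would then classify the $\sigma$-fixed prime divisors, namely the coordinate lines (which are not $\mathcal{J}_2$-invariant, as one checks on $\{x=0\}$, where the field reduces to $-\partial_x+y^3\partial_y$) and the curves cut out by $\sigma$-eigenforms, and use $p\not\equiv1\bmod3$ together with the $\tau$-action to show that a single $\mathcal{J}_2$-invariant component occurs and exhausts the degree $p+4$.

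The main obstacle is this last step: the precise classification of the $G$-semi-invariant curves that are simultaneously $\mathcal{J}_2$-invariant, and the proof that no reducible configuration is compatible with the orbit-and-degree constraints once $7\nmid p+4$ and $p\not\equiv1\bmod3$ are imposed. Equivalently, one must understand the special pair $(\mathcal{J}_2,C)$ well enough to see that its structure is incompatible with a proper $G$-stable decomposition of a reduced invariant curve of degree $p+4$. In this reading the hypothesis $p\not\equiv1\bmod3$ prevents the invariant curve from splitting under the order-$3$ symmetry, while $7\nmid p+4$ both excludes the degenerate prime $p=3$ and forces the $\sigma$-fixed component on which the irreducibility argument is anchored.
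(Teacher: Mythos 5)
There is a genuine gap, and it sits exactly where you wave it away. You import the dichotomy of Theorem \ref{TeoremaA} into the case $d=2$ on the grounds that the hypothesis $p<d$ only enters ``degree bookkeeping.'' But in the paper's proof of that dichotomy, the case of a non-special invariant curve $C$ fixed by no power of the order-$(d^2+d+1)$ automorphism is closed by Proposition \ref{autbao}, whose hypothesis $p<d$ is vacuous for $d=2$, $p>2$: the inequality it would provide degenerates to $p+4\geq 7\deg(C)$, which is no contradiction once $p\geq 11$. So for $d=2$ the dichotomy is simply not available, and the free-orbit case is precisely where the hypothesis $7\nmid p+4$ has to do its work. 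The paper's actual proof of Theorem \ref{JouJouJou} handles it directly: a $p$-factor $pP$ is impossible by degree ($p\deg P+7\deg C\leq p+4$ fails), hence every prime in the support of $\Delta_{\mathcal{J}_2}$ is invariant; if none is fixed by a nontrivial power of $\Phi$, all $\Phi$-orbits have size $7$, forcing $7\mid\deg\Delta_{\mathcal{J}_2}=p+4$, contradiction. You never run this count --- your orbit considerations appear only inside the (unestablished) special horn, where, incidentally, they are redundant: Lemma \ref{speciallll2} shows the special alternative is vacuous in degree two, since Corollary \ref{grauspecial} forces $\deg C=4$ while Proposition \ref{speciallll} places a double point of $C$ at each of the $7$ singularities of the $p$-reduced foliation $\mathcal{J}_2$, and an irreducible quartic carries at most $3$ double points.

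The second gap is one you concede yourself: after reducing to ``$\Delta_{\mathcal{J}_2}=C$ reduced invariant of degree $p+4$, prove $C$ irreducible,'' you declare the classification of $\sigma$-fixed invariant components to be ``the main obstacle'' and leave it open, whereas this is the substantive half of the paper's argument. The paper closes it with the eigenform computation: if $\Phi^{l}$ ($l\not\equiv 0\bmod 7$) fixes an invariant component $C$, then since $(\mathcal{J}_2,C)$ is not special one has $dF\wedge\omega=F\bigl(\tfrac{\deg C}{4}d\omega+i_R\beta\bigr)$ with $\beta=(ax+by+cz)\,dx\wedge dy\wedge dz\neq 0$, and applying $\Phi^{l}$ gives $(\Phi^{l})^{*}\beta=\gamma^{l}\beta$, whence $\gamma^{10l}a=a$, $\gamma^{6l}b=b$, $\gamma^{5l}c=c$ and $a=b=c=0$, a contradiction. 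Note also that everything above needs $\mathcal{J}_2$ to be $p$-reduced, which comes from the Baum--Bott eigenvalue computation of Lemma \ref{Jounaopfechada} and the hypothesis $p\not\equiv 1\bmod 3$; and that the paper gets by with the cyclic group of order $7$ alone --- your order-$21$ group $\mathbb{Z}/7\rtimes\mathbb{Z}/3$ is correct and mildly convenient for ruling out invariant lines, but it does not substitute for the two missing steps.
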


As a consequence of the Theorem \ref{TeoremaA} and Theorem \ref{TeoremaB} we give a answer to the Problem \ref{problem22} to the Jouanolou foliations (see Corollary \ref{respostaproblemII}).

\begin{propp} In the conditions of Theorem \ref{TeoremaA}, the Jouanolou foliation of degree $d\geq 2$ in characteristic $p>0$ has an unique invariant curve and that curve has degree $pl+d+2$, with $0<l\leq d-1$.
\end{propp}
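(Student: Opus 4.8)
The plan is to extract both assertions — uniqueness of the invariant curve and its degree — from the degree of the $p$-divisor together with the dichotomy of Theorem \ref{TeoremaA} and the correspondence between invariant curves and the support of $\Delta_{\mathcal{J}_d}$ recalled in the introduction (see \cite[Proposition 3.8]{mendson2022foliations}). First I would record $\deg\Delta_{\mathcal{J}_d}$. Since $\mathcal{J}_d$ has degree $d$ we have $T_{\mathcal{J}_d}=\mathcal{O}_{\mathbb{P}^2}(1-d)$ and $N_{\mathcal{J}_d}=\mathcal{O}_{\mathbb{P}^2}(d+2)$, and the curvature morphism defining the $p$-divisor is a global section of $N_{\mathcal{J}_d}\otimes T_{\mathcal{J}_d}^{\otimes(-p)}$; hence $\deg\Delta_{\mathcal{J}_d}=(d+2)+p(d-1)$. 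The single arithmetic input I will use repeatedly is the congruence $\deg\Delta_{\mathcal{J}_d}\equiv d+2\pmod p$.

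Next I would prove uniqueness. By \cite[Proposition 3.8]{mendson2022foliations}, every invariant curve is a prime component of $\Delta_{\mathcal{J}_d}$ and every prime component with multiplicity prime to $p$ is invariant, so the non-invariant components are exactly those whose multiplicity is divisible by $p$. In the first alternative of Theorem \ref{TeoremaA} the support of $\Delta_{\mathcal{J}_d}$ is a single prime divisor, hence there is at most one invariant curve; combined with the existence of an algebraic solution in characteristic $p$ (see \cite[Corollary 3.8]{mendson2022foliations}) this gives exactly one. In the second alternative $\Delta_{\mathcal{J}_d}=C+pR$ with the components of $R$ non-invariant, so every invariant curve is a component of $C$, and here I would invoke the definition of a special pair (Definition \ref{parspecial}) to conclude that $C$ is a single reduced prime divisor. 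Either way $\mathcal{J}_d$ has a unique invariant curve $C_0$.

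For the degree I would write $\Delta_{\mathcal{J}_d}=\mu C_0+pR$, where $pR$ is the (possibly empty) non-invariant part and $\mu$ is the multiplicity of $C_0$. Once $\mu=1$ is known, reducing the degree identity modulo $p$ gives $\deg C_0\equiv d+2\pmod p$, and comparison with $\deg\Delta_{\mathcal{J}_d}=p(d-1)+d+2$ yields $\deg C_0=p\bigl((d-1)-\deg R\bigr)+d+2$; setting $l=(d-1)-\deg R$ this is exactly $\deg C_0=pl+d+2$. The upper bound $l\le d-1$ is immediate from $\deg R\ge 0$, with equality precisely in the irreducible case, where $R=0$ and $C_0=\Delta_{\mathcal{J}_d}$ is reduced of degree $p(d-1)+d+2$.

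The main obstacle is the strict lower bound $l>0$, which is equivalent to the two facts that $\mu=1$ and that $\mathcal{J}_d$ has no invariant curve of degree $\le d+2$; in particular it rules out small-degree solutions such as invariant lines. An invariant curve of degree exactly $d+2=\deg N_{\mathcal{J}_d}$ would make the rational $1$-form $\omega/F$, with $F$ a defining equation of $C_0$, a regular logarithmic form, forcing either a rational first integral or a second invariant curve and hence contradicting the finiteness of invariant curves guaranteed by $\Delta_{\mathcal{J}_d}\ne 0$. I expect the clean way to exclude the degenerate configurations $l\le 0$ is through the action of the automorphism group of $\mathcal{J}_d$ on the unique curve $C_0$: the hypotheses that $d^2+d+1$ is prime and $p\not\equiv 1\pmod 3$ force the relevant cyclic group to act without small invariant configurations, which is precisely the rigidity encoded in the special-pair analysis used for Theorem \ref{TeoremaA}. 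This is the step I would develop in the greatest detail, the remainder being bookkeeping with degrees modulo $p$.
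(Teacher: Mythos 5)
Your degree bookkeeping (computing $\deg\Delta_{\mathcal{J}_d}=p(d-1)+d+2$, reducing modulo $p$, and extracting uniqueness from the dichotomy together with \cite[Proposition 3.8]{mendson2022foliations}) matches how the paper reads the corollary off from Theorem \ref{irredutibilidade}. But there is a genuine gap at exactly the step you yourself flag as the main obstacle: the strict inequality $l>0$, i.e.\ excluding an invariant curve of degree $d+2$. Neither of your two sketches closes it. The logarithmic-form idea fails in characteristic $p$: if $\deg C=d+2$ then the relation $dF\wedge\omega=F\,\Theta$ with $\beta_{\mathcal{F},C}=0$ just says $\omega/F$ is closed, which in characteristic $p$ yields neither a rational first integral nor a second invariant curve, and ``finiteness of invariant curves'' is not contradicted by a single curve of degree $d+2$ --- indeed $d+2$ is precisely the degree a special pair would have with $l=0$, so degree considerations alone cannot rule it out. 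The automorphism-rigidity idea also does not bite here: in the second alternative of Theorem \ref{irredutibilidade} the automorphism $\Phi_d$ \emph{fixes} the curve $C$, so the cyclic group of order $d^2+d+1$ places no lower bound on $\deg C$ by orbit counting; the group action is what the paper uses to kill the \emph{non-special} case, not to prove $l>0$.

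What actually forces $l>0$ in the paper is the local analysis of special pairs, not the symmetry. Under $p\not\equiv 1\bmod 3$, Lemma \ref{Jounaopfechada} shows the eigenvalues of $\mathcal{J}_d$ are not in $\mathbb{F}_p$, so $\mathcal{J}_d$ is $p$-reduced with $d^2+d+1$ distinct singular points; Corollary \ref{grauspecial} then gives the congruence $\deg C\equiv d+2 \bmod p$, and Proposition \ref{speciallll} shows that a special pair satisfies $\sing(\mathcal{J}_d)\subset C$ with $m_q(C)=2$ at every singular point. If $l=0$, so $e:=\deg C=d+2$, the classical bound for an irreducible plane curve,
$$
d^2+d+1=\#\sing(\mathcal{J}_d)\ \leq\ \sum_{q\in\sing(C)}\frac{m_q(C)(m_q(C)-1)}{2}\ \leq\ \frac{(e-1)(e-2)}{2}=\frac{d(d+1)}{2},
$$
is violated --- this is exactly the computation of Lemma \ref{speciallll2} in degree two, and it is the content you should develop in place of the log-form heuristic. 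Two smaller corrections: in the second alternative, the irreducibility of $C$ comes from its construction in the proof of Theorem \ref{irredutibilidade} (it is chosen there as an irreducible invariant curve), not from Definition \ref{parspecial}, since $\beta_{\mathcal{F},C}=0$ imposes nothing on irreducibility; and in the first alternative you do not need \cite[Corollary 3.8]{mendson2022foliations} for existence (that statement needs $p\nmid d+2$, which your hypotheses do not guarantee) --- the prime divisor $\Delta_{\mathcal{J}_d}$ has multiplicity $1\not\equiv 0\bmod p$, hence is invariant directly by \cite[Proposition 3.8]{mendson2022foliations}, giving the unique invariant curve with $l=d-1$.
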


The proof of the Theorem \ref{TeoremaA} and Theorem \ref{TeoremaB} envolves some study of the influence of the automorphism group of a foliation on the structure of their $p$-divisor. This is explored in the Section \ref{automorfismoIIII}.

In the last part of the article, we present a simple proof of the fact that the Jouanolou foliation of odd degree, under some arithmetic conditions, has no algebraic solutions. This implies, in particular, the following result.

\begin{theorem}\label{TeoremaC} A very generic foliation of degree odd $d\geq 2$, with $d\not\equiv 1\mod 3$ on the projetive plane has no algebraic solutions.
\end{theorem}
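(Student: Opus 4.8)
The plan is to deduce Theorem \ref{TeoremaC} from the corresponding statement for the single foliation $\mathcal{J}_d$ by a standard genericity argument, so that the only genuinely new input is the absence of algebraic solutions for the Jouanolou foliation itself, which I obtain by reduction modulo $2$.

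First I would set up the genericity framework. Write $\mathbb{F}_d$ for the (irreducible) projective space parametrizing foliations of degree $d$ on $\mathbb{P}^2_{\mathbb{C}}$, and for each integer $m\geq 1$ let
$$
V_m = \{\mathcal{F}\in\mathbb{F}_d : \mathcal{F}\text{ admits an invariant curve of degree}\leq m\}.
$$
Each $V_m$ is Zariski closed: for a fixed degree $k\leq m$ the incidence set $\{(\mathcal{F},C)\in\mathbb{F}_d\times|\mathcal{O}(k)| : v(F)\in(F)\}$ is closed, since the invariance condition is polynomial in the coefficients; its image under the projection to $\mathbb{F}_d$ is closed because $|\mathcal{O}(k)|$ is proper, and $V_m$ is the finite union of these images over $k\leq m$. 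A foliation has no algebraic solution precisely when it lies outside $\bigcup_{m\geq 1}V_m$. Hence, if every $V_m$ is a proper subvariety of $\mathbb{F}_d$, then $\bigcup_m V_m$ is a countable union of nowhere dense closed sets and its complement is very generic (residual and of full measure). It therefore suffices to exhibit, for the given $d$, a single foliation lying in no $V_m$.

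The witness is $\mathcal{J}_d$ itself, which I claim has no algebraic solution when $d$ is odd and $d\not\equiv 1\bmod 3$; granting this, $\mathcal{J}_d\notin V_m$ for all $m$, every $V_m$ is proper, and Theorem \ref{TeoremaC} follows at once. To prove the claim I argue by reduction modulo $2$. Since $\mathcal{J}_d$ has coefficients in $\mathbb{Z}$ it reduces to a degree $d$ foliation over $\mathbb{F}_2$, and because $d$ is odd we have $2\nmid d+2$, so $\mathcal{J}_d\bmod 2$ does carry an invariant curve; the relevant object is its $2$-divisor $\Delta_{\mathcal{J}_d}$, of degree $(2+1)(d-1)+3=3d$. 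Suppose $\mathcal{J}_d$ had an algebraic solution over $\mathbb{C}$; then it has one defined over a number field, and spreading out over $\mathbb{Z}$ and specializing at a prime above $2$ yields an invariant curve of $\mathcal{J}_d\bmod 2$ of the same degree $m\leq 3d$. By the $p$-divisor dictionary (invariant curves lie in the support of $\Delta_{\mathcal{J}_d}$, and reduced components of multiplicity prime to $p$ are invariant) this curve is forced inside $\Delta_{\mathcal{J}_d}$; and the hypotheses $d\not\equiv 1\bmod 3$ together with $2\not\equiv 1\bmod 3$ are exactly what make the order-$3$ automorphism $[x:y:z]\mapsto[y:z:x]$ and the diagonal symmetry of order $d^2+d+1$ act on $\mathcal{J}_d$, so that the automorphism analysis of Section \ref{automorfismoIIII} (the same tools behind Theorems \ref{TeoremaA} and \ref{TeoremaB}) pins down the invariant part of $\Delta_{\mathcal{J}_d}$ and shows it cannot be the reduction of a curve invariant in characteristic $0$.

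The hard part will be this last step, namely controlling the invariant curves of $\mathcal{J}_d\bmod 2$ precisely enough. Concretely I must (i) guarantee good reduction, i.e.\ that a complex invariant curve specializes to a genuine, degree-preserving invariant curve mod $2$ rather than degenerating, and (ii) combine the degree bound $m\leq 3d$ coming from $\deg\Delta_{\mathcal{J}_d}=3d$ with the rigidity imposed by $\Aut(\mathcal{J}_d)$ to exclude every admissible degree. This is where the interplay between the $p$-divisor structure and the automorphism group is essential, and where the arithmetic conditions on $d$ are genuinely used; once the Jouanolou foliation is shown to carry no algebraic solution, the genericity reduction above closes the proof with no further work.
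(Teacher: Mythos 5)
Your skeleton matches the paper's at the top level: reduce Theorem \ref{TeoremaC} to the single foliation $\mathcal{J}_d$ (your $V_m$ argument is the standard genericity reduction the paper leaves implicit in its first sentence), then kill the invariant curves of $\mathcal{J}_d$ by reduction modulo $2$. But the two steps that actually carry the proof are missing or misstated. First, you never establish that the $2$-divisor is \emph{irreducible}, and you misattribute the hypothesis $d\not\equiv 1 \bmod 3$: it has nothing to do with which automorphisms act. The cyclic symmetry $[x:y:z]\mapsto[y:z:x]$ preserves $\mathcal{J}_d$ for every $d$, and the diagonal symmetry of order $d^2+d+1$ always survives to characteristic $2$ because $d^2+d+1$ is odd. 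In the paper the congruence enters through S.~Gao's Newton polytope criterion: one computes the affine equation of the $2$-divisor explicitly as $f = x^{d-1}+y^{2d+1}+x^{d}y^{d}+x^{2d+1}y^{d-1}$, checks its polytope lies in the triangle with vertices $(d-1,0)$, $(0,2d+1)$, $(2d+1,d-1)$, and absolute irreducibility requires $\gcd(d-1,2d+1)=\gcd(d-1,3)=1$, i.e.\ exactly $d\not\equiv 1\bmod 3$ (Proposition \ref{2Jou} and Corollary \ref{2divisor}); similarly it is the parity of $d$, not any mod-$3$ condition, that makes $\mathcal{J}_d$ non-$2$-closed.

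Second, your contradiction does not close, and your degree bound points the wrong way. If $F$ has coefficients in (the integers of) a number field and cuts out an invariant curve, then after reducing modulo a prime above $2$ every irreducible factor of the reduction is invariant, hence --- granting irreducibility of $\Delta_{\mathcal{F}_d}$ --- equals the equation of $\Delta_{\mathcal{F}_d}$ itself; so the reduction is a \emph{power} of that equation and one gets $\deg F \geq 3d$, a lower bound, not your claimed $m\leq 3d$. In particular nothing you wrote rules out a complex invariant curve of degree $3d$ reducing onto $\Delta_{\mathcal{F}_d}$: the reduction-mod-$2$ framework alone yields no contradiction. The ingredient you flag as ``the hard part'' without supplying is precisely the paper's Lemma \ref{grauuuu}: averaging a hypothetical invariant curve under $\Aut(\mathcal{J}_d)$ over $\mathbb{C}$ produces a reduced invariant curve $D$ for which the pair $(\mathcal{J}_d,D)$ is special, and the special-pair identity then forces $\deg D = d+2$ exactly (the characteristic-zero analogue of Corollary \ref{grauspecial}, following the automorphism analysis of Section \ref{automorfismoIIII}). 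Since $d+2$ is odd, this curve survives reduction, whence $d+2\geq \deg\Delta_{\mathcal{F}_d}=3d$, absurd for $d>1$. Without that automorphism/special-pair degree computation --- which is where $\Aut(\mathcal{J}_d)$ genuinely enters, in characteristic zero rather than characteristic two --- the proposal has a genuine gap at its central step.
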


Recall that the term \textbf{very generic} means that there exists a
countable union of closed sets $S$ in the space of holomorphic foliations of degree $d$ such that any foliations lying outside $S$ has no algebraic solutions. 

The Theorem \ref{TeoremaC} is well-known (see \cite{MR537038} and \cite[Theorem 1]{MR2156709}). The new content in this work is the method of the proof which uses only reduction to characteristic two and some irreducibility test for polynomials in two variables due to S.Gao (see \cite[Corollary 4.12]{MR1816701}).

\subsection{Organization of the paper} In Section \ref{notacoes} we fix the notations that will be used in the paper. In Section \ref{pdivisoremP2} we recall the definition of foliation on the projective plane, the notion of the $p$-divisor, automorphisms of foliation and we discuss some topics around reduction modulo $p$. In Section \ref{pdivisorP1P1}, we discuss some relations between foliations on the complex projective plane that has no algebraic solutions and foliations defined over a field of characteristic $p>0$ that has irreducible $p$-divisor. In Section \ref{automorfismoIIII}, we explore the automorphism group of foliations and its influence on the struture of the $p$-divisor where we present proofs of Theorem \ref{TeoremaA} and Theorem \ref{TeoremaB}. In the last section, we investigate the Jouanolou foliations in characteristic two and we present the proof of Theorem \ref{TeoremaC}.

\section{Notation} \label{notacoes}

\begin{itemize}

    \item $\mathcal{J}_d = $ the Jouanolou foliation;

    \item $\Delta_{\mathcal{F}}$ = the $p$-divisor associated to a foliation $\mathcal{F}$;

    \item $\spm(R) = $ the collection of maximal ideals of a domain $R$;

    \item $\mathbb{F}_{\mathfrak{q}} = $ the residue fied of $\mathfrak{q}\in \spm(R)$;
    
    \item $\mathbb{Z}[\mathcal{F}]=$ the $\mathbb{Z}$-algebra obtained by adjunction of all coefficients which appear in $\mathcal{F}$;

    \item $\mathcal{F}_\mathfrak{p} = $ the reduction modulo $\mathfrak{p}$ of $\mathcal{F}$;

    \item $\corpo[x,y,z]_e = $ $\corpo$-vector space of homogeneous polynomials of degree $e$ in $x,y,z$;

    \item $m_{Q}(C) =$ algebraic multiplicity of the curve $C \subset X$ at $Q \in X$;

    \item $O(2)=$ terms of the order at least $2$;

\end{itemize}

\section{Foliations on the projetive plane}\label{pdivisoremP2}

Let $d\in\mathbb{Z}_{>0}$ and $\corpo$ be an algebraically closed field. A foliation $\mathcal{F}$ on the projective plane $\mathbb{P}_{\corpo}^2$ is given, modulo $\corpo^{*}$, by a non-zero global section $\omega \in \h(\mathbb{P}_{\corpo}^2,\Omega_{\mathbb{P}_{\corpo}^2}^1(d+2))$ with finite singular locus. The degree of $\mathcal{F}$ has a geometric content: it is the number of tangencies between $\omega$ and a generic line $l \cong \mathbb{P}_{\corpo}^1$ in $\mathbb{P}_{\corpo}^2$. Using the Euler exact sequence, we can see $\omega$ as a homogeneous $1$-form on $\mathbb{A}_{\corpo}^3$: $\omega = Adx+Bdy+Cdz$ where
$$
Ax+By+Cz = 0 \quad \mbox{and} \quad\sing(\mathcal{F}) =  \mathcal{Z}(A,B,C)
  \mbox{ is finite}.
$$

\subsection{$p$-closed and non-$p$-closed foliations}

Let $\corpo$ be an algebraically closed field of characteristic $p>0$. Let $\mathcal{F}$ be a foliation of degree $d$ on $\mathbb{P}_{\corpo}^2$ and assume that $p$ does not divide $d+2$. Suppose that $\mathcal{F}$ is given by the projective $1$-form: $\omega = Adx+Bdy+Cdz$ and write
$$
d\omega = (d+2)(Ldy\wedge dz-Mdx\wedge dz+Ndx\wedge dy).
$$
Let $v_{\omega}$ be the homogeneous vector field on $\mathbb{A}_{\corpo}^3$ defined by:
$$
v_{\omega} = L\partial_x+M\partial_y+N\partial_z.
$$
Observe that by construction we have
$$
\mbox{\textbf{div}}(v_{\omega}) = \partial_xL+\partial_yM+\partial_zN = 0.
$$

We recall the following proposition form \cite{MR537038}.

\begin{prop} The map which associes $\omega$ to $v_{\omega}$ gives a bejection between the set $\h(\mathbb{P}_{\corpo}^2, \Omega_{\mathbb{P}_{\corpo}^2}^1(d+2))$ and the set of homogeneous vector fields of degree $d$ on $\mathbb{A}_{\corpo}^3$ with zero divergent.
\end{prop}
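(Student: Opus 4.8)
The plan is to produce an explicit two-sided inverse to the assignment $\Phi\colon \omega\mapsto v_\omega$, built from the radial (Euler) vector field $\mathcal{R}=x\partial_x+y\partial_y+z\partial_z$ and contraction with the volume form $\nu=dx\wedge dy\wedge dz$. Via the Euler sequence I regard $\h(\mathbb{P}_{\corpo}^2,\Omega_{\mathbb{P}_{\corpo}^2}^1(d+2))$ as the space of $1$-forms $\omega=A\,dx+B\,dy+C\,dz$ with $A,B,C\in\corpo[x,y,z]_{d+1}$ and $\iota_{\mathcal{R}}\omega=Ax+By+Cz=0$. Since $\iota_{v_\omega}\nu=L\,dy\wedge dz-M\,dx\wedge dz+N\,dx\wedge dy$, the defining relation reads $\iota_{v_\omega}\nu=\tfrac{1}{d+2}\,d\omega$. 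First I would record that $\Phi$ is well defined: $L,M,N$ are homogeneous of degree $d$, and from $d\,\iota_{v_\omega}\nu=\tfrac{1}{d+2}\,d(d\omega)=0$ together with the identity $d(\iota_v\nu)=\operatorname{div}(v)\,\nu$ one gets $\operatorname{div}(v_\omega)=0$, so $v_\omega$ indeed lands in the target set.

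Next I would define the candidate inverse $\Psi\colon v\mapsto \iota_{\mathcal{R}}\iota_v\nu$. For a divergence-free homogeneous field $v$ of degree $d$, the $2$-form $\iota_v\nu$ has coefficients of degree $d$, so $\Psi(v)$ is a $1$-form with coefficients in $\corpo[x,y,z]_{d+1}$; moreover $\iota_{\mathcal{R}}\Psi(v)=\iota_{\mathcal{R}}\iota_{\mathcal{R}}\iota_v\nu=0$, so $\Psi(v)$ lies in $\h(\mathbb{P}_{\corpo}^2,\Omega_{\mathbb{P}_{\corpo}^2}^1(d+2))$. The heart of the argument is then Cartan's formula $\mathcal{L}_{\mathcal{R}}=d\,\iota_{\mathcal{R}}+\iota_{\mathcal{R}}\,d$ together with the weight identity $\mathcal{L}_{\mathcal{R}}\alpha=(m+k)\alpha$ for a $k$-form $\alpha$ whose coefficients are homogeneous of degree $m$. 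For $\Psi\circ\Phi$: since $\iota_{\mathcal{R}}\omega=0$, Cartan gives $\iota_{\mathcal{R}}\,d\omega=\mathcal{L}_{\mathcal{R}}\omega=(d+2)\omega$, whence $\Psi(v_\omega)=\iota_{\mathcal{R}}\bigl(\tfrac{1}{d+2}d\omega\bigr)=\omega$. For $\Phi\circ\Psi$: the form $\iota_v\nu$ is closed (as $v$ is divergence-free) with weight $d+2$, so Cartan yields $d\,\iota_{\mathcal{R}}\iota_v\nu=\mathcal{L}_{\mathcal{R}}\iota_v\nu=(d+2)\,\iota_v\nu$, which upon dividing by $d+2$ says exactly $\iota_v\nu=\iota_{v_{\Psi(v)}}\nu$, and since contraction with the nowhere-zero top form $\nu$ is injective on vector fields this forces $v_{\Psi(v)}=v$. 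Hence $\Phi$ and $\Psi$ are mutually inverse; as both are $\corpo$-linear this is even an isomorphism of vector spaces.

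The step I expect to need the most care is keeping everything valid in characteristic $p$. The weight identity rests only on Euler's relation $\mathcal{R}(f)=(\deg f)\,f$ for homogeneous $f$ (and $\mathcal{L}_{\mathcal{R}}dx_i=dx_i$), which are formal identities of derivations and so survive in characteristic $p$; the single place where I divide is by $d+2$, and this is licensed precisely by the standing hypothesis $p\nmid d+2$. In particular I would avoid any appeal to an analytic Poincaré lemma: the operator $\tfrac{1}{d+2}\,\iota_{\mathcal{R}}$ is an explicit algebraic homotopy that reconstructs the primitive on the nose, so no exactness theorem is invoked. A cosmetic point to settle at the outset is the sign bookkeeping in $\iota_v\nu$, so that the $-M$ in the middle slot matches the paper's normalization of $d\omega$; once that is fixed, the defining relation $\iota_{v_\omega}\nu=\tfrac{1}{d+2}d\omega$ and the reconstruction $\omega=\iota_{\mathcal{R}}\iota_{v_\omega}\nu$ are simply the same computation read forwards and backwards.
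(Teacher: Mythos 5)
Your proof is correct, and it is complete where the paper is not: the paper offers no argument at all for this proposition, delegating it to Jouanolou's monograph (Proposition 1.1.4 of \cite{MR537038}). Your explicit homotopy $\Psi(v)=\iota_{\mathcal{R}}\iota_v\nu$, combined with Cartan's formula and the weight identity $\mathcal{L}_{\mathcal{R}}\alpha=(m+k)\alpha$, is in substance the standard mechanism behind Jouanolou's proof; it is equivalent to the exactness of the Koszul complex of the regular sequence $(x,y,z)$ on $\corpo[x,y,z]$, which is precisely the tool this paper itself invokes later (in the lemma preceding Definition \ref{parspecial}) to produce the form $\beta_{\mathcal{F},C}$. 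All the individual steps check out: $\iota_{v_\omega}\nu=\tfrac{1}{d+2}\,d\omega$ matches the paper's normalization of $d\omega$ with the signs as you fixed them, $d(\iota_v\nu)=\operatorname{div}(v)\,\nu$ gives well-definedness, $\iota_{\mathcal{R}}^2=0$ puts $\Psi(v)$ in the Euler-sequence image, and the two Cartan computations yield $\Psi\Phi=\mathrm{id}$ and $\Phi\Psi=\mathrm{id}$ using only the formal identities $\mathcal{L}_{\mathcal{R}}f=(\deg f)f$ and $\mathcal{L}_{\mathcal{R}}dx_i=dx_i$, which survive in characteristic $p$. Your isolation of the single division by $d+2$ is exactly right: it is covered by the standing hypothesis $p\nmid d+2$ stated at the start of the subsection, and without it the correspondence $\omega\mapsto v_\omega$ is not even defined. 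What your write-up buys over the bare citation is that the bijection is visibly $\corpo$-linear (so an isomorphism of vector spaces) and that the characteristic-$p$ validity, which the paper needs but leaves implicit in the reference to Jouanolou's characteristic-zero setting, is made explicit.
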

\begin{proof} See \cite[Proposition 1.1.4]{MR537038}.
\end{proof}

We say that $\mathcal{F}$ is \textbf{$p$-closed} if $i_{v_{\omega}^p}\omega = 0$, where $i_{v_{\omega}^p}$ denotes the contraction map. If $\mathcal{F}$ is not $p$-closed then we obtain a divisor $\Delta_{\mathcal{F}} = \{i_{v_{\omega}^p}\omega = 0\} \in \Div(\mathbb{P}_{\corpo}^2)$ of degree $p(d-1)+d+2$. This is called the \textbf{$p$-divisor} associated to $\mathcal{F}$. Recall that a reduced algebraic curve $C = \{F = 0\}$ is $\mathcal{F}$-\textbf{invariant} if there is $\Theta$, a $2$-homogeneous form of degree $d-1$, on $\mathbb{A}_{\corpo}^{3}$ such that
$$
dF \wedge \omega = F\Theta.
$$

A fundamental property of this notion is contained in the following proposition. 

\begin{prop} Let $\mathcal{F}$ be a non-$p$-closed foliation on $\mathbb{P}_{\corpo}^2$ and $C$ be an irreducible curve in $\mathbb{P}_{\corpo}^2$. Then:
    \begin{itemize}
          \item if $C$ is $\mathcal{F}$-invariant then $\ord_{\Delta_{\mathcal{F}}}(C)>0$;

          \item if $\ord_{\Delta_{\mathcal{F}}}(C)\not\equiv 0 \mod p$ then $C$ is $\mathcal{F}$-invariant.
    \end{itemize}
\end{prop}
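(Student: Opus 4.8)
The plan is to reduce the statement to a local computation at the generic point $\eta$ of $C=\{F=0\}$ and then to exploit the behaviour of $p$-th powers of derivations. Since $\sing(\mathcal{F})$ is finite, $\eta$ is a smooth point of $C$ lying outside $\sing(\mathcal{F})$, so on a suitable neighbourhood I can choose coordinates $(u,t)$ with $F=u$ and a nowhere-vanishing local generator $v:=v_{\omega}=a\,\partial_x$-type field $a\,\partial_u+b\,\partial_t$, where $a=v(F)$ and $b=v(t)$. The local foliation $1$-form is, up to a unit, $b\,du-a\,dt$, the unique form killing $v$. Writing $v^{p}=a_p\,\partial_u+b_p\,\partial_t$ with $a_p=v^{p}(F)$ and $b_p=v^{p}(t)$, one gets $i_{v^{p}}\omega=b\,a_p-a\,b_p$ up to a unit, so that
$$\ord_{\Delta_{\mathcal{F}}}(C)=\ord_C\big(b\,a_p-a\,b_p\big).$$
Note also that $\Delta_{\mathcal{F}}$ is intrinsic: replacing $v_{\omega}$ by another local generator $fv$ multiplies $i_{v^{p}}\omega$ by $f^{p}$ (since $(fv)^{p}\equiv f^{p}v^{p}$ modulo multiples of $v$, and $i_v\omega=0$), hence does not change $\ord_C$.

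For the first bullet, recall that $C$ is $\mathcal{F}$-invariant if and only if $v(F)\in(F)$, i.e. $a\in(F)$. A derivation preserving the ideal $(F)$ has all of its iterates preserving $(F)$, because $v^{k}((F))=v^{k-1}(v((F)))\subseteq v^{k-1}((F))\subseteq\cdots\subseteq(F)$; in particular $a_p=v^{p}(F)\in(F)$. Then both $a$ and $a_p$ lie in $(F)$, so $b\,a_p-a\,b_p\in(F)$ and $\ord_{\Delta_{\mathcal{F}}}(C)>0$. (Equivalently, and globally: from the invariance identity $dF\wedge\omega=F\Theta$ and the formula $v^{p}(F)=\big(v^{p-1}(\lambda)+\lambda^{p}\big)F$ for the cofactor $v(F)=\lambda F$, applying $i_{v^{p}}$ yields $(i_{v^{p}}\omega)\,dF=F\cdot\eta$ for a $1$-form $\eta$; since $F$ is irreducible it is not a $p$-th power, so $dF\neq 0$ and some partial of $F$ is prime to $F$, forcing $F\mid i_{v^{p}}\omega$.)

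For the second bullet I argue by contraposition: if $C$ is not invariant I must show $p\mid\ord_{\Delta_{\mathcal{F}}}(C)$. Non-invariance means $a=v(F)$ is a unit at $\eta$, so by the intrinsicality above I may rescale the generator to assume $v(F)=1$. Then $a_p=v^{p}(F)=v^{p-1}(1)=0$, and the order collapses to $\ord_{\Delta_{\mathcal{F}}}(C)=\ord_C\big(v^{p}(t)\big)$. The claim is that this order is always divisible by $p$, and this is the heart of the matter. The mechanism is transparent in the $t$-independent model $v=\partial_u+b(u)\,\partial_t$, where $v^{p}(t)=b^{(p-1)}(u)$: for $b=\sum_j c_j u^{j}$ the coefficient of $u^{\,j-p+1}$ equals $c_j\,j(j-1)\cdots(j-p+2)$, which is nonzero modulo $p$ only when $j\equiv -1\pmod p$, and every such surviving monomial lands in $u$-degree $j-p+1\equiv 0\pmod p$. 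Hence $p\mid\ord_C(v^{p}(t))$ in this model.

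The main obstacle is precisely this last divisibility in full generality, i.e. when $b$ depends on $t$, where cross terms obstruct the naive computation (already for $p=2$ one must use identities such as the vanishing of second derivatives of the relevant coefficients in characteristic $2$). The conceptual reason behind it is that the curvature morphism $v\mapsto v^{p}\bmod T_{\mathcal{F}}$ is $p$-linear (Frobenius-linear), so away from the invariant curves the section of $N_{\mathcal{F}}\otimes (F^{*}T_{\mathcal{F}})^{\vee}$ cutting out $\Delta_{\mathcal{F}}$ is a Frobenius pull-back; its zero locus is therefore a $p$-th multiple along each non-invariant prime. I would make this precise by working over the discrete valuation ring $\mathcal{O}_{\mathbb{P}_{\corpo}^2,\eta}$, identifying $v^{p}(t)$ with the image of $b$ under a Cartier-type operator, and checking that this operator raises the $u$-adic valuation into a residue class $\equiv 0\pmod p$. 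This upgrades the explicit $t$-independent computation above to the general statement and completes the proof.
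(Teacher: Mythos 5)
The paper itself does not prove this proposition---its ``proof'' is the citation to Proposition 3.8 of \cite{mendson2022foliations}---so your argument has to stand on its own, and most of it does. The localization at the generic point $\eta$ of $C$, the intrinsicality of $\ord_{\Delta_{\mathcal{F}}}(C)$ under $v\mapsto fv$ (via the identity $(fv)^p=f^pv^p+((fv)^{p-1}f)\,v$ together with $i_v\omega=0$), the proof of the first bullet ($v$ preserves the ideal $(F)$, hence so do all of its iterates, in particular $v^p$), and the reduction of the second bullet to the claim that $p\mid\ord_C(v^p(t))$ once the local generator is rescaled so that $v(u)=1$: all of this is correct and is essentially the standard route to this statement.

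There is, however, a genuine gap exactly at the step you yourself flag: the divisibility $p\mid\ord_C(v^p(t))$ is established only for the model $v=\partial_u+b(u)\,\partial_t$ with $b$ independent of $t$, and the passage to a general $b(u,t)$ is delegated to an unnamed ``Cartier-type operator'' whose existence and valuation-raising property are never established. This step is the entire content of the second bullet, and the model case does not imply it: no change of coordinates makes $b$ independent of $t$ in general, because rectification-type normal forms fail in characteristic $p$---the obstruction to normalizing $v$ is precisely the class of $v^p$ modulo $v$, i.e.\ the quantity whose order along $C$ you are trying to compute---so restricting to the model comes close to assuming the conclusion. The missing step admits a short, expansion-free argument you could substitute for the heuristic: with $v=\partial_u+b\,\partial_t$ and $v(u)=1$ one has $v^p(u)=v^{p-1}(1)=0$, so $v^p=c\,\partial_t$ with $c=v^p(t)\neq 0$ (as $\mathcal{F}$ is not $p$-closed); since $[v,v^p]=0$, evaluating this bracket on $t$ gives $v(c)=c\,\partial_t(b)$; writing $c=u^{m}\varepsilon$ in the discrete valuation ring $\mathcal{O}_{\mathbb{P}^2_{\corpo},\eta}$ with $\varepsilon$ a unit and $m=\ord_{\Delta_{\mathcal{F}}}(C)$, and using $v(u)=1$, one finds $v(c)/c=m\,u^{-1}+v(\varepsilon)/\varepsilon$; since $v(c)/c=\partial_t(b)$ and $v(\varepsilon)/\varepsilon$ are regular at $\eta$, it follows that $m\,u^{-1}\in\mathcal{O}_{\mathbb{P}^2_{\corpo},\eta}$, which forces $m\equiv 0\bmod p$. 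This residue computation is what your Frobenius-linearity intuition is really encoding, and with it your outline becomes a complete proof.
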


\begin{proof} See \cite[Proposition 3.8]{mendson2022foliations}.
\end{proof}

Let $\mathcal{F}$ be a foliation on $\mathbb{P}_{\corpo}^2$ and $q \in \sing(\mathcal{F})$ be a singular point. Recall (see \cite[Definition 3.10]{mendson2022foliations}) that the point $q$ is called \textbf{$p$-reduced singularity} if there is a affine open subset $U\subset \mathbb{P}_{\corpo}^2$ which contains $q$ such that $\mathcal{F}|_{U}$ has linear part at $q$ of the form $ydx+\alpha xdy$ with $\alpha \not\in \mathbb{F}_p$. The foliation $\mathcal{F}$ is called \textbf{$p$-reduced foliation} if all singular point $q\in \sing(\mathcal{F})$ is $p$-reduced. If $\mathcal{F}$ has at lest a $p$-reduced singularity then $\mathcal{F}$ is not $p$-closed by \cite[Lemma 3.11]{mendson2022foliations}.

\begin{OBS} Let $X$ be a smooth surface defined over $\corpo$. A foliation $\mathcal{F}$ on $X$ can be defined as a system $\{(U_i,\omega_i,v_i)\}_i$ where $\{U_i\}$ is an open cover of $X$, $\omega_i \in \Omega_{X}^1(U_i)$ and $v_i\in T_{X}(U_i)$ satisfying compatibility conditions. The collection $\{\omega_i\}_i$ and $\{v_i\}_i$ define the \textbf{normal bundle}, $N_{\mathcal{F}}$, and the \textbf{tangent bundle}, $T_{\mathcal{F}}$, of the foliation. The foliation is not $p$-closed if $i_{v_i^p}\omega_i \neq 0$ for every $i$. In that case, the collection $\{i_{v_i^p}\omega_i\}_i$ defines a non-zero global section $s_{\mathcal{F}}$ of $(T_{\mathcal{F}}^{*})^{p}\otimes N_{\mathcal{F}}$ and the zero divisor of $s_{\mathcal{F}}$ is the \textbf{$p$-divisor} of $\mathcal{F}$ (see \cite[Section 3]{mendson2022foliations}).
\end{OBS}

\subsection{Automorphism of foliations}

Let $\corpo$ be an algebraically closed field and $\mathcal{F}$ a foliation on $\mathbb{P}_{\corpo}^{2}$ defined by a projective $1$-form $\omega \in \h(\mathbb{P}_{\corpo}^{2}, \Omega_{\mathbb{P}_{\corpo}^{2}}^{1}(d+2))$. An automorphism of $\mathcal{F}$ is an automorphism of the projective plane $\Phi\in \Aut(\mathbb{P}_{\corpo}^{2})$ which preserves $\mathcal{F}$. By preserves $\mathcal{F}$ we mean the following: if the foliation is given by the projective $1$-form $\omega \in \h(\mathbb{P}_{\corpo}^{2}, \Omega_{\mathbb{P}_{\corpo}^{2}}^{1}(d+2))$ then $\Phi^{*}\omega = \sigma(\Phi)\omega$ for some $\sigma(\Phi) \in \corpo^{*}$.

\begin{OBS} By the definition of $p$-divisor it follows that if $\mathcal{F}$ is not $p$-closed and $\Phi\in \Aut(\mathcal{F})$ then $\Phi^{*}\Delta_{\mathcal{F}} = \Delta_{\mathcal{F}}$.
\end{OBS}

The \textbf{Jouanolou foliation} defined on the projective plane $\mathbb{P}_{\corpo}^2$ of degree $d>1$ is given by the projective $1$-form:
$$
\mathcal{J}_d: \quad \omega = (x^{d}z-y^{d+1})dx+(xy^{d}-z^{d+1})dy+(z^{d}y-x^{d+1})dz.
$$
Suppose that the characteristic of $\corpo$ does not divide $d^2+d+1$. Let $\mu(\corpo)$ the group of the $(d^2+d+1)$th roots of the unity and let $\gamma$ be a generator of this group. Consider the automorphism $\Phi\colon \mathbb{P}_{\corpo}^2\longrightarrow \mathbb{P}_{\corpo}^2$ which associe $[x:y:z]\mapsto [\gamma^{d^2+1}x:\gamma y: z]$. Then $\Phi^{*}\omega = \gamma \omega$ so that $\Phi$ is an automorphism of $\mathcal{F}$. Note that $\Phi$ has order $d^2+d+1$. This automorphism will be used in the Section \ref{automorfismoIIII} to study the struture of the $p$-divisor of $\mathcal{J}_d$.

\subsection{Reduction modulo $p$}

Let $\mathcal{F}$ be a foliation of degree $d$ on the complex projective plane given by the projective $1$-form:
$$
\omega = Adx+Bdy+Cdz
$$ where $A,B,C \in \mathbb{C}[x_0,x_1,x_2]_{d+1}$. Let $\mathbb{Z}[\mathcal{F}]$ the $\mathbb{Z}$-algebra of finite type obtained by adjunction of all constant coefficients which appears on $A,B$ and $C$. Let $\mathfrak{p}\in \spm(\mathbb{Z}[\mathcal{F}])$ be a maximal ideal and let $p\in \mathbb{Z}_{>0}$ the characteristic of the residue field $\mathbb{F}_{\mathfrak{p}} =  \mathbb{Z}[\mathcal{F}]/\mathfrak{p}$. Note that $\mathbb{F}_{\mathfrak{p}}$ is a finite field by \cite[\href{https://stacks.math.columbia.edu/tag/00GB}{Tag 00GB}]{stacks-project} and, in particular, it has characteristic $p>0$. Denote by $\omega_{\mathfrak{p}} = \omega \otimes\mathbb{F}_{\mathfrak{p}}$ the section of $\Omega_{\mathbb{P}_{\overline{\mathbb{F}}_{\mathfrak{p}}}^2}^1(d+2)$ obtained by reduction modulo $\mathfrak{p}$ of the coefficients of $\omega$.

\begin{dfn} The foliation $\mathcal{F}_{\mathfrak{p}}$ defined by $\omega_{\mathfrak{p}}$ is called the \textbf{reduction modulo $\mathfrak{p}$} of $\mathcal{F}$.
\end{dfn}

Let $\mathcal{F}$ be a foliation on the complex projective plane and \textbf{P} be an abstract property. We say that \textbf{P} holds to $\mathcal{F}_{\mathfrak{p}}$ for \textbf{infinitely many primes $\mathfrak{p}$} if there is a dense subset $S\subset \spm(\mathbb{Z}[\mathcal{F}])$ such that for any $\mathfrak{m}\in S$ the property \textbf{P} holds to $\mathcal{F}_{\mathfrak{m}}$. Similary, we say that \textbf{P} holds to $\mathcal{F}_{\mathfrak{p}}$ for \textbf{almost all primes $\mathfrak{p}$} if there is a non-empty open subset $U \subset \spm(\mathbb{Z}[\mathcal{F}])$ such that for any $\mathfrak{m}\in U$ the property \textbf{P} holds to $\mathcal{F}_{\mathfrak{m}}$. 

The property \textbf{P} can be for example: $\mathcal{F}_{\mathfrak{p}}$ has an invariant curve of degree $e$, the foliation $\mathcal{F}_{\mathfrak{p}}$ is not $p$-closed or the $p$-divisor of $\mathcal{F}_{\mathfrak{p}}$ is irreducible.

\section{Algebraicity and the $p$-divisor}\label{pdivisorP1P1}

In this section, we investigate the relation between the class of foliations on the complex projective plane without algebraic solution and the class of foliations on the projective plane over characteristic $p>0$ that has irreducible $p$-divisor.  We begin the study with the following well-known basic lemma.

\begin{lemma}\label{basicao22} Let $X = \mathcal{Z}(f_1,\ldots,f_r)
\subset \mathbb{A}_{\mathbb{C}}^{m}$ be an affine variety and $A$ be the $\mathbb{Z}$-algebra of finite type obtained by adjunction of all coefficients which appers on $f_0,\ldots,f_r \in \mathbb{C}[x_1,\ldots,x_m]$. Then, $X(\mathbb{C})\neq \varnothing$ if and only if $X(\overline{\mathbb{F}}_{\mathfrak{q}})\neq \varnothing$ for infinitely many primes $\mathfrak{q}\in \spm(A)$
\end{lemma}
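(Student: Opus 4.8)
The plan is to reformulate the statement scheme-theoretically and read off both implications from Chevalley's theorem together with the fact that $A$, being a finitely generated $\mathbb{Z}$-algebra and a domain (it is the subring of $\mathbb{C}$ generated by the coefficients of the $f_i$), is a Jacobson ring. Set $B = A[x_1,\dots,x_m]/(f_1,\dots,f_r)$ and consider the morphism $\pi\colon \Spec(B)\to \Spec(A)$. For $\mathfrak{q}\in\spm(A)$ the fibre of $\pi$ is $\Spec\!\big(\mathbb{F}_{\mathfrak{q}}[x_1,\dots,x_m]/(\bar f_1,\dots,\bar f_r)\big)$, and by the Nullstellensatz over the algebraically closed field $\overline{\mathbb{F}}_{\mathfrak{q}}$ this fibre is nonempty precisely when $X(\overline{\mathbb{F}}_{\mathfrak{q}})\neq\varnothing$. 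Thus the whole statement becomes a comparison between ``the generic point lies in $\operatorname{im}(\pi)$'' and ``$\operatorname{im}(\pi)\cap\spm(A)$ is dense.'' Here I recall that $\spm(A)$ is dense in $\Spec(A)$ and that each $\mathbb{F}_{\mathfrak{q}}$ is finite, both consequences of the Jacobson property (the finiteness being the already-cited Tag 00GB).

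For the forward direction, assuming $X(\mathbb{C})\neq\varnothing$, I first check that the generic fibre of $\pi$ is nonempty. Writing $K=\operatorname{Frac}(A)\subseteq\mathbb{C}$, the $f_i$ cannot generate the unit ideal in $\mathbb{C}[x]$ by the Nullstellensatz, hence not in $K[x]$ either, so $B\otimes_A K = K[x]/(f_i)\neq 0$ and the generic point $(0)$ lies in $\operatorname{im}(\pi)$. By Chevalley's theorem $\operatorname{im}(\pi)$ is constructible, and being dense in the irreducible space $\Spec(A)$ it contains a nonempty open $U\subseteq\Spec(A)$. Then $S:=U\cap\spm(A)$ is dense (closed points are dense in $U$ since $A$ is Jacobson), and every $\mathfrak{q}\in S$ satisfies $X(\overline{\mathbb{F}}_{\mathfrak{q}})\neq\varnothing$; this is exactly the required dense set of primes.

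For the converse I argue by contraposition: assume $X(\mathbb{C})=\varnothing$, so $1\in(f_1,\dots,f_r)$ in $\mathbb{C}[x]$. Clearing denominators in this Bézout identity I produce a nonzero $a\in A$ lying in the ideal $(f_1,\dots,f_r)A[x]$, say $a=\sum_i h_i f_i$ with $h_i\in A[x]$. Concretely, the finitely many coefficients occurring in the $\mathbb{C}$-identity generate a finitely generated domain $A'\supseteq A$ inside $\mathbb{C}$ over which the identity lives; localizing at $A\setminus\{0\}$ forces $B\otimes_A K=0$, which yields such an $a$. Reducing modulo any $\mathfrak{q}$ with $a\notin\mathfrak{q}$ gives $\bar a=\sum_i\bar h_i\bar f_i$ with $\bar a$ a nonzero constant, so $1\in(\bar f_i)$ and $X(\overline{\mathbb{F}}_{\mathfrak{q}})=\varnothing$. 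Hence $\{\mathfrak{q}:X(\overline{\mathbb{F}}_{\mathfrak{q}})\neq\varnothing\}\subseteq V(a)$, a proper closed subset of $\Spec(A)$; its intersection with $\spm(A)$ has closure contained in $V(a)\subsetneq\Spec(A)$ and so cannot be dense. Therefore no dense set of primes can have $X(\overline{\mathbb{F}}_{\mathfrak{q}})\neq\varnothing$, which is the contrapositive we wanted.

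The main obstacle I anticipate is the bookkeeping around the meaning of ``infinitely many primes,'' which in this paper means ``on a dense subset of $\spm(A)$''; matching this to the topological statements above is where the Jacobson property does all the work, guaranteeing simultaneously that closed points are dense, that a dense constructible subset of $\Spec(A)$ contains a dense set of closed points, and that the residue fields $\mathbb{F}_{\mathfrak{q}}$ are finite. The only genuinely computational point is the clearing-of-denominators step producing $a$, which becomes routine once one passes to the finitely generated subalgebra $A'$ carrying the Bézout identity.
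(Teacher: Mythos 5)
Your proof is correct, and it takes a genuinely different route from the paper's on the nontrivial direction. The paper only proves that direction (a dense set of primes with $X(\overline{\mathbb{F}}_{\mathfrak{q}})\neq\varnothing$ forces $X(\mathbb{C})\neq\varnothing$), citing the literature for the forward implication; it argues by contradiction \emph{upward}: from $X(\mathbb{C})=\varnothing$ it takes the Nullstellensatz identity $1=\alpha_1f_1+\cdots+\alpha_nf_n$ over $\mathbb{C}$, enlarges $A$ to the finite-type $\mathbb{Z}$-algebra $R$ generated by the coefficients of the $\alpha_i$, and then lifts infinitely many of the given maximal ideals of $A$ to maximal ideals of $R$, where the reduced identity kills all $\overline{\mathbb{F}}_{\mathfrak{q}}$-points. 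You instead work \emph{downward}: via faithful flatness of the field extension $\operatorname{Frac}(A)\subseteq\operatorname{Frac}(A')$ you descend the unit-ideal statement to $K[x]$, clear denominators to get $a=\sum_i h_if_i$ with $a\in A\setminus\{0\}$ and $h_i\in A[x]$, and conclude that the bad primes lie in the proper closed set $V(a)$ of $\Spec(A)$, so they cannot be dense. This descent buys you something real: it sidesteps the lifting of maximal ideals from $A$ to $R$, which is the delicate (and in the paper rather sketchy) step, resting on the assertion that almost all maximal ideals are contractions from $R$ and on matching residue fields; your exceptional locus is instead the explicit divisor $V(a)$, which interfaces cleanly with the paper's density definition of ``infinitely many primes.'' You also supply a full proof of the easy direction (generic fibre nonempty, Chevalley, Jacobson density of closed points), which the paper merely cites. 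The one place you compress is the claim that localizing forces $B\otimes_AK=0$: the clean justification is that $B\otimes_A\operatorname{Frac}(A')=0$ together with faithful flatness of $K\to\operatorname{Frac}(A')$ gives $B\otimes_AK=0$, exactly as you indicate, so this is a routine fill-in rather than a gap.
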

\begin{proof} This is well-known, but we will present a simple proof here of the non-trivial part (see \cite[Proposition 2.6]{MR1468476}). Suppose by contradiction that $X(\overline{\mathbb{F}}_{\mathfrak{q}})\neq \varnothing$ for infinitely many primes $\mathfrak{q}\in \spm(R)$ but $X(\mathbb{C})= \varnothing$. Let $S \subset \spm(A)$ the set of that maximal ideals. By the Nullstellensatz there is a relation:
$$
1  = \alpha_1f_1+\dots+\alpha_nf_n
$$
for some $\alpha_1,\ldots,\alpha_n \in \mathbb{C}[x_1,\ldots,x_m]$. Let $R$ the $A$-algebra of finite type obtained by inverting all constant coefficients of $\alpha_1,\ldots,\alpha_n$. Since, for almost all maximal ideal of $\mathbb{Z}$ is contraction of some maximal ideal of $R$ (see \cite[Proposition 4.1.5]{MR1790619}) we conclude that there are infinity of maximal ideals $\mathfrak{p} \in \spm(R)$ such that $\mathfrak{p}\cap A \in S$. So, for infinitely many ideals $\mathfrak{q}\in \spm(R)$ we have $1  \equiv \alpha_1f_1+\dots+\alpha_nf_n \mod \mathfrak{q}$, but this contradicts the fact $X(\overline{\mathbb{F}}_{\mathfrak{q}})\neq \varnothing$.
\end{proof}

\begin{prop}\label{globalzao} Let $\mathcal{F}$ be a foliation on the complex projective plane and let $d\in \mathbb{Z}_{>0}$. Then $\mathcal{F}$ has a $\mathcal{F}$-invariant curve of degree $d$ if and only if $\mathcal{F}_{\mathfrak{p}}$ has a $\mathcal{F}_{\mathfrak{p}}$-invariant curve of degree $d$ for infinitely many primes $\mathfrak{p}$.
\end{prop}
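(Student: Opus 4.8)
The plan is to translate the existence of a degree-$d$ invariant curve into the nonemptiness of a single affine variety defined over $\mathbb{Z}[\mathcal{F}]$, and then invoke Lemma \ref{basicao22}. Write $\omega = A\,dx+B\,dy+C\,dz$ with $A,B,C$ having coefficients in $\mathbb{Z}[\mathcal{F}]$. A nonzero homogeneous $F$ of degree $d$ defines a $\mathcal{F}$-invariant curve $\{F=0\}$ exactly when there is a $2$-form $\Theta$, with homogeneous coefficients of the appropriate degree, such that $dF\wedge\omega = F\Theta$. Expanding $dF\wedge\omega - F\Theta$ and comparing the coefficient of each monomial turns this identity into a finite system of equations that are \emph{bilinear} in the unknown coefficients $a=(a_0,\dots,a_N)$ of $F$ and $b=(b_1,\dots,b_M)$ of $\Theta$, and whose own coefficients are integer-linear combinations of the coefficients of $A,B,C$, hence lie in $\mathbb{Z}[\mathcal{F}]$.

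The only delicate point is to impose $F\neq 0$ in a way that survives reduction modulo $\mathfrak{p}$ uniformly, since the trivial solution $a=0$ is always present. For this I would introduce Rabinowitsch-type auxiliary variables $\lambda=(\lambda_0,\dots,\lambda_N)$ together with the single equation $\sum_{i=0}^{N}\lambda_i a_i = 1$; over any field this is solvable in $\lambda$ if and only if $a\neq 0$. Setting
$$\tilde{X} = \left\{(a,b,\lambda)\ :\ dF\wedge\omega = F\Theta,\ \textstyle\sum_i \lambda_i a_i = 1\right\},$$
where $F,\Theta$ denote the forms with coefficients $a,b$, we obtain an affine variety over $\mathbb{Z}[\mathcal{F}]$ such that, for every field $k$ receiving $\mathbb{Z}[\mathcal{F}]$, a $k$-point of $\tilde{X}$ is the same datum as a nonzero degree-$d$ polynomial defining a $\mathcal{F}_k$-invariant curve together with a corresponding $\Theta$. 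In particular $\tilde{X}(\mathbb{C})\neq\varnothing$ iff $\mathcal{F}$ has a $\mathcal{F}$-invariant curve of degree $d$, and $\tilde{X}(\overline{\mathbb{F}}_{\mathfrak{q}})\neq\varnothing$ iff $\mathcal{F}_{\mathfrak{q}}$ has one, because the defining equations of $\tilde{X}$ reduce modulo $\mathfrak{q}$ precisely to those attached to $\omega_{\mathfrak{q}}$.

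With this encoding the proposition follows from Lemma \ref{basicao22} applied to $\tilde{X}$: the $\mathbb{Z}$-algebra generated by the coefficients of its defining equations is $\mathbb{Z}[\mathcal{F}]$ (all coefficients of $A,B,C$ occur), so that algebra's $\spm$ is $\spm(\mathbb{Z}[\mathcal{F}])$, and $\tilde{X}(\mathbb{C})\neq\varnothing$ iff $\tilde{X}(\overline{\mathbb{F}}_{\mathfrak{q}})\neq\varnothing$ for infinitely many $\mathfrak{q}$. Translating both sides through the previous paragraph yields the stated equivalence. The remaining routine check is the compatibility of the two notions of ``infinitely many'': starting from $\mathbb{C}$, spreading out shows that the good primes contain a dense open of $\spm(\mathbb{Z}[\mathcal{F}])$, matching the paper's definition; for the converse it suffices that a dense, hence infinite, set of primes be good, which is exactly the hypothesis. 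The main obstacle is thus not the reduction machinery itself but arranging the variety $\tilde{X}$ so that $F\neq 0$ becomes a polynomial condition over $\mathbb{Z}[\mathcal{F}]$; the Rabinowitsch variables achieve this with no choice depending on $\mathfrak{p}$, so no pigeonhole over coordinate charts is needed.
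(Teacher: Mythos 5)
Your proposal is correct and follows essentially the same strategy as the paper's proof: encode the invariance condition $dF\wedge\omega = F\Theta$ as the defining equations of a single variety over $\mathbb{Z}[\mathcal{F}]$, bilinear in the coefficients of $F$ and $\Theta$, and conclude by Lemma \ref{basicao22}. The only difference is technical: the paper imposes $F\neq 0$ by projectivizing the pair $([F],[\Theta])$ inside $\mathbb{P}(\mathcal{O}_{\mathbb{P}^2}(d))\times\mathbb{P}(V_{e-1})$ and projecting (properly) to the first factor, whereas you stay affine and force $a\neq 0$ with the Rabinowitsch equation $\sum_i\lambda_i a_i = 1$ --- which in fact matches the affine hypotheses of Lemma \ref{basicao22} more directly and, unlike the projectivization of $\Theta$, does not exclude solutions with $\Theta = 0$.
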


\begin{proof} Let $\corpo$ be an algebraically closed field and $\mathcal{G}$ a foliation on $\mathbb{P}_{\corpo}^2$ defined by a $1$-projective form $\Omega \in \h(\mathbb{P}_{\corpo}^2,\Omega^{1}_{\mathbb{P}_{\corpo}^2}(e+2))$. Note that the set of all $\mathcal{G}$-invariant curves of degree $d$ on $\mathbb{P}_{\corpo}^2$ is an projective variety defined over $\corpo$. Indeed, consider the following algebraic set:
$$
\mathcal{S}_d(\mathcal{G}) = \{([F],[\Theta]) \in \mathbb{P}(\mathcal{O}_{\mathbb{P}^{2}}(d))\times \mathbb{P}(V_{e-1})\mid \Omega\wedge dF-F\Theta = 0\}
$$
where $\mathbb{P}(\mathcal{O}_{\mathbb{P}^{2}}(d))$ denotes the projective space which parametrizes all homogeneous polynomial of degree $d$ and $\mathbb{P}(V_{e-1})$ parametrizes all homogeneous $3$-forms on $\mathbb{A}_{\mathbb{C}}^{3}$ of degree $e-1$. Then the set of all $\mathcal{F}$-invariant curves of degree $d$ is the set $S_d(\mathcal{G}) :=\pi_1(\mathcal{S}_d(\mathcal{G}))$ where $\pi_1\colon \mathbb{P}(\mathcal{O}_{\mathbb{P}^{2}}(d))\times \mathbb{P}(V_{e-1}) \longrightarrow \mathbb{P}(\mathcal{O}_{\mathbb{P}^{2}}(d))$ is the projection to the first factor. Since the projection is a proper map, we conclude that $S_d(\mathcal{G})$ is a projective variety.

By Lemma \ref{basicao22}, we conclude $S_d(\mathcal{F})(\mathbb{C})\neq \varnothing$ if and only if $S_d(\mathcal{F}_{\mathfrak{q}})(\overline{\mathbb{F}}_{\mathfrak{q}})\neq \varnothing$ for infinitely many primes $\mathfrak{q}$ and this proves the lemma.
\end{proof}

\begin{prop}\label{local_global}  Let $\mathcal{F}$ be a foliation on the complex projective plane of degree $d>1$. Then if the $p$-divisor of the foliation $\mathcal{F}_\mathfrak{p}$ is irreducible for infinitely many primes $p$ then $\mathcal{F}$ has no algebraic solutions.
\end{prop}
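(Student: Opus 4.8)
The plan is to prove the contrapositive: assuming $\mathcal{F}$ has an algebraic solution, I will show the $p$-divisor of $\mathcal{F}_{\mathfrak{p}}$ fails to be irreducible for all but finitely many primes $\mathfrak{p}$ in a dense subset, contradicting the hypothesis that it is irreducible for infinitely many $\mathfrak{p}$. So suppose $\mathcal{F}$ admits an algebraic solution, i.e. an invariant curve $C = \{F = 0\}$ of some degree $e$. By Proposition \ref{globalzao}, the reduction $\mathcal{F}_{\mathfrak{p}}$ has an invariant curve of degree $e$ for infinitely many primes $\mathfrak{p}$; concretely, the reduction $C_{\mathfrak{p}} = \{F_{\mathfrak{p}} = 0\}$ is $\mathcal{F}_{\mathfrak{p}}$-invariant for a dense set of $\mathfrak{p}$.

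Next I would invoke the fundamental property of the $p$-divisor recalled earlier in the excerpt: any $\mathcal{F}_{\mathfrak{p}}$-invariant irreducible curve appears in the support of $\Delta_{\mathcal{F}_{\mathfrak{p}}}$ with positive order. Since $\Delta_{\mathcal{F}_{\mathfrak{p}}}$ has total degree $p(d-1)+d+2$, the invariant curve coming from $C_{\mathfrak{p}}$ can coincide with the full reduced $p$-divisor only if its degree matches. The key numerical point is that $C$ has a \emph{fixed} degree $e$ independent of $\mathfrak{p}$, whereas the degree of $\Delta_{\mathcal{F}_{\mathfrak{p}}}$ grows linearly in $p$. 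Thus for all sufficiently large $p$ we have $e < p(d-1)+d+2$, so an irreducible component of degree $e$ cannot exhaust the $p$-divisor; the $p$-divisor therefore has at least one further component (or $C_{\mathfrak{p}}$ enters with multiplicity less than the total, forcing a complementary effective divisor), and in either case $\Delta_{\mathcal{F}_{\mathfrak{p}}}$ is reducible (or non-reduced with more than one prime in its support). This contradicts irreducibility of $\Delta_{\mathcal{F}_{\mathfrak{p}}}$ for infinitely many $\mathfrak{p}$.

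The main obstacle, and the point requiring care, is the case where $C_{\mathfrak{p}}$ itself might fail to remain irreducible or reduced under reduction, or where $\Delta_{\mathcal{F}_{\mathfrak{p}}}$ is irreducible as a scheme but supported on a single prime with high multiplicity. If $\Delta_{\mathcal{F}_{\mathfrak{p}}} = m\,D$ for a single prime $D$, then irreducibility of the $p$-divisor forces its support to be a single curve $D$ of degree $\deg D = (p(d-1)+d+2)/m$. For this to be consistent with $C_{\mathfrak{p}} \subseteq \mathrm{supp}(\Delta_{\mathcal{F}_{\mathfrak{p}}})$, I must argue that $D = C_{\mathfrak{p}}$ and then that the degree $e = \deg C$ cannot equal $(p(d-1)+d+2)/m$ simultaneously for infinitely many $p$ with $e$ fixed — which again follows because the right-hand side is unbounded in $p$ while $e$ is constant, unless $m$ also grows, but $m \le p(d-1)+d+2$ is constrained and the reduced support degree is at least $1$. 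Carefully handling this divisibility bookkeeping, together with passing to a dense subset of $\spm(\mathbb{Z}[\mathcal{F}])$ on which both Proposition \ref{globalzao} applies and $p$ is large, yields the contradiction and completes the proof.
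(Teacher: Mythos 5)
Your proposal is correct and follows essentially the same route as the paper's own proof: invariance of $C$ is preserved under reduction, so $C_{\mathfrak{p}}$ appears in $\Delta_{\mathcal{F}_{\mathfrak{p}}}$ by \cite[Proposition 3.8]{mendson2022foliations}, and irreducibility forces $\Delta_{\mathcal{F}_{\mathfrak{p}}} = C_{\mathfrak{p}}$, which is impossible since $\deg C$ is fixed while $\deg \Delta_{\mathcal{F}_{\mathfrak{p}}} = p(d-1)+d+2$ grows with $p$. Your extra bookkeeping for the case $\Delta_{\mathcal{F}_{\mathfrak{p}}} = m\,D$ with $m>1$ is not needed under the paper's convention (an irreducible $p$-divisor is a prime divisor, i.e.\ multiplicity one), and the paper's proof accordingly omits it.
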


\begin{proof} Suppose that $\mathcal{F}_\mathfrak{q}$ has irreducible $q$-divisor for infinitely many primes $\mathfrak{q}$. Assume by contradiction that there is $C$ an irreducible curve that is invariant by $\mathcal{F}$. Since the invariance is preseved by reduction modulo $\mathfrak{q}$, we have that $C_\mathfrak{q}$ is a $\mathcal{F}_{\mathfrak{q}}$-invariant for infinitely many primes $\mathfrak{q}$. By \cite[Proposition 3.8]{mendson2022foliations} we know that $C_{\mathfrak{q}}\leq \Delta_{\mathcal{F}_{\mathfrak{q}}}$.  But, since $\Delta_{\mathcal{F}_{\mathfrak{q}}}$ is irreducible for infinitely many primes, we concude that $\Delta_{\mathcal{F}_{\mathfrak{q}}} = C_{\mathfrak{q}}$ for infinitely many primes $\mathfrak{q}$, a contradiction by degree comparison.
\end{proof}

\begin{OBS} There are foliations on the complex projective plane that has reduced $p$-divisor for almost all primes $\mathfrak{p}$. Indeed, consider the foliation $\mathcal{F}$ given in $\mathbb{A}^{2}_{\mathbb{C}}$ by the polynomial $1$-form:
$$
    \omega = ydx-xdy+a(x,y)dx+b(x,y)dy =ydx-xdy+\omega_2
$$
where $a(x,y),b(x,y)\in \mathbb{C}[x,y]$ are generic homogeneous polynomials of degree two. The foliation induced on $\mathbb{P}_{\mathbb{C}}^2$ by $\omega$ has four invariant lines:
the line at infinity $l_{\infty} = \{z=0\}$ and $l_1,l_2,l_3$ the factors of the polynomial $i_{R}\omega_2 = l_1l_2l_3$. Then for almost primes $\mathfrak{p}$ the $p$-divisor of $\mathcal{F}_{\mathfrak{p}}$ has the form
$$
\Delta_{\mathcal{F}_{\mathfrak{p}}} = l_1+l_2+l_3+l_{\infty}+C_{\mathfrak{p}}
$$
where $C_{\mathfrak{p}}$ is irreducible of degree $p$ (see \cite[Proposition 4.3]{mendson2022foliations}).
\end{OBS}

The next proposition follows from results that are contained in the paper \cite{mendson2022codimension} but we present a direct proof here (compare with \cite[Proposition 5.5]{mendson2022codimension}).

\begin{prop}\label{pullll} Let $\corpo$ be an algebraically closed field of characteristic $p>0$ and $X$ and $Y$ smooth projective surfaces defined over $\corpo$. Consider $
  \Phi\colon X \longrightarrow Y $ be a finite dominant separable morphism with ramification divisor $D$ and suppose that the reduced part of $\Phi(D)$ is not $\mathcal{F}$-invariant. Let $\mathcal{F}$ be  a foliation on $Y$ with irreducible $p$-divisor and let $\mathcal{G} = \Phi^{*}\mathcal{F}$ the induced folition on $X$. Then $\mathcal{G}$ is not $p$-closed with $p$-divisor given by $\Delta_{\mathcal{G}} = \Phi^{*}\Delta_{\mathcal{F}}+pD$, where $D$ denotes the the ramification divisor $D$.
\end{prop}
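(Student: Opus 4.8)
The plan is to work locally, comparing the $p$-divisor of $\mathcal{G}=\Phi^*\mathcal{F}$ with the pullback of the $p$-divisor of $\mathcal{F}$, using the functoriality of the curvature/$p$-power construction under a separable morphism. Recall that $\mathcal{F}$ is given locally by a pair $(\omega_i,v_i)$ and the $p$-divisor is the zero divisor of the section $s_{\mathcal{F}}=\{i_{v_i^p}\omega_i\}$ of $(T_{\mathcal{F}}^*)^p\otimes N_{\mathcal{F}}$. Since $\Phi$ is finite dominant separable, the pulled-back foliation $\mathcal{G}$ is defined by $(\Phi^*\omega_i,\tilde v_i)$, where $\tilde v_i$ is the lift of $v_i$ along $\Phi$; because separability means $d\Phi$ is generically an isomorphism, such a lift exists away from the ramification locus and the tangent bundle satisfies $T_{\mathcal{G}}=\Phi^*T_{\mathcal{F}}(-D)$ (the twist by $-D$ records the ramification), while the normal bundle pulls back as $N_{\mathcal{G}}=\Phi^*N_{\mathcal{F}}$.

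First I would establish that $\mathcal{G}$ is not $p$-closed. The key point is that the $p$-power map is compatible with pullback of vector fields along a separable morphism: if $v$ is a local section of $T_{\mathcal{F}}$ and $\tilde v$ its lift, then $\widetilde{v^p}=\tilde v^{\,p}$, because the Frobenius-semilinear $p$-power operation on derivations commutes with the injection $d\Phi$ induced by a separable map. Consequently $i_{\tilde v^p}\Phi^*\omega=\Phi^*(i_{v^p}\omega)$ as sections, so $s_{\mathcal{G}}=\Phi^*s_{\mathcal{F}}$ up to the twisting data. Since $s_{\mathcal{F}}\neq 0$ and $\Phi$ is dominant, $\Phi^*s_{\mathcal{F}}\neq 0$, hence $\mathcal{G}$ is not $p$-closed. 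Then the divisor identity follows by comparing zero loci: the section $s_{\mathcal{G}}$ lives in $(T_{\mathcal{G}}^*)^p\otimes N_{\mathcal{G}}=\Phi^*((T_{\mathcal{F}}^*)^p\otimes N_{\mathcal{F}})(pD)$, and the extra factor $\mathcal{O}(pD)$ forces $s_{\mathcal{G}}$ to vanish to order $p$ along $D$. Quotienting out this forced vanishing, the residual section equals $\Phi^*s_{\mathcal{F}}$, whose zero divisor is $\Phi^*\Delta_{\mathcal{F}}$, giving $\Delta_{\mathcal{G}}=\Phi^*\Delta_{\mathcal{F}}+pD$.

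The main obstacle I anticipate is the bookkeeping at the ramification divisor: one must show precisely that the contribution of $D$ to $\Delta_{\mathcal{G}}$ is exactly $pD$ and not something of higher or mixed order, and that no spurious components arise. This is where the hypothesis that the reduced part of $\Phi(D)$ is not $\mathcal{F}$-invariant enters. If $\Phi(D)$ were $\mathcal{F}$-invariant, it would itself appear in $\Delta_{\mathcal{F}}$, and then $\Phi^*\Delta_{\mathcal{F}}$ would already carry a component supported on $D$, so the two contributions along $D$ would interfere and the clean identity $\Delta_{\mathcal{G}}=\Phi^*\Delta_{\mathcal{F}}+pD$ could fail. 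The non-invariance hypothesis guarantees $\Phi(D)$ does not lie in the support of $\Delta_{\mathcal{F}}$, so $\Phi^*\Delta_{\mathcal{F}}$ has no component along $D$ and the $pD$ term is the sole contribution there; this keeps the two summands disjoint in support and the identity exact.

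To make the twist computation rigorous I would work in local coordinates adapted to $\Phi$, writing $\Phi$ near a generic point of $D$ in the form $(u,t)\mapsto(u,t^e)$ with $p\nmid e$ (separability) or more generally with the ramification index recorded by $D$, and then directly compute the order of vanishing of $i_{\tilde v^p}\Phi^*\omega$ along $t=0$. The Frobenius-semilinearity of the $p$-power is what produces the factor $p$ in the ramification order, matching the $\mathcal{O}(pD)$ twist identified globally. Combining the local computation with the global line-bundle identity and the non-invariance hypothesis yields the stated formula.
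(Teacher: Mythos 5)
Your overall strategy---establishing the section-level identity $s_{\mathcal{G}}=\Phi^{*}s_{\mathcal{F}}\otimes s_{D}^{\otimes p}$ by a local computation---is viable, and it is genuinely different from the paper's proof, which works globally: there the hypothesis on $\Phi(D)$ yields $N_{\mathcal{G}}=\Phi^{*}N_{\mathcal{F}}$, the Hurwitz formula $K_{X}=\Phi^{*}K_{Y}+D$ together with foliated adjunction yields $K_{\mathcal{G}}=\Phi^{*}K_{\mathcal{F}}+D$, and the formula $\Delta_{\mathcal{G}}=pK_{\mathcal{G}}+N_{\mathcal{G}}$ then gives the result. But your write-up has a genuine gap at its central step. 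The claim that the extra factor $\mathcal{O}_{X}(pD)$ in $(T_{\mathcal{G}}^{*})^{p}\otimes N_{\mathcal{G}}=\Phi^{*}\bigl((T_{\mathcal{F}}^{*})^{p}\otimes N_{\mathcal{F}}\bigr)(pD)$ \emph{forces} $s_{\mathcal{G}}$ to vanish to order $p$ along $D$ is a non sequitur: a section of a twisted bundle is in no way forced to vanish along the twisting divisor---that is exactly the difference between an equality of divisor classes (which the twist gives for free, and which is all the paper's computation literally produces) and the asserted equality of divisors. What must be proved is the identity of sections, and there your equation $\widetilde{v^{p}}=\tilde v^{\,p}$ is correct only for the rational (étale-locus) lift $\hat v$ with $d\Phi(\hat v)=v\circ\Phi$. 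The local generator of $T_{\mathcal{G}}$ at a generic point of $D$ is not $\hat v$ but $\tilde v=J\hat v$, where $J$ is the Jacobian, a local equation of $D$, and $p$-th powers do not commute with multiplication by functions. The missing ingredient is Hochschild's formula $(fw)^{p}=f^{p}w^{p}+\bigl((fw)^{p-1}(f)\bigr)w$, which together with $i_{\hat v}\Phi^{*}\omega=\Phi^{*}(i_{v}\omega)=0$ kills the cross term and yields $i_{\tilde v^{p}}\Phi^{*}\omega=J^{p}\,\Phi^{*}(i_{v^{p}}\omega)$; this identity, not Frobenius-semilinearity alone, is where the coefficient $p$ on $D$ comes from.

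You also misplace the hypothesis that the reduced part of $\Phi(D)$ is not $\mathcal{F}$-invariant. Its role is not to keep the supports of $\Phi^{*}\Delta_{\mathcal{F}}$ and $pD$ disjoint: divisors add regardless of supports, and a non-invariant curve can in any case occur in a $p$-divisor with multiplicity divisible by $p$, as the conclusion of this very proposition shows. Its actual role is upstream, precisely at the bundle identities you assert without justification: it guarantees that $\Phi^{*}\omega$ does not vanish identically along $D$ (in the tame model $(u,t)\mapsto(u,t^{e})$ one has $\Phi^{*}\omega=A(u,t^{e})\,du+eB(u,t^{e})t^{e-1}\,dt$, which vanishes along $t=0$ exactly when $\Phi(D)$ is $\mathcal{F}$-invariant), hence that $N_{\mathcal{G}}=\Phi^{*}N_{\mathcal{F}}$ and that $\tilde v=J\hat v$ really is the saturated generator of $T_{\mathcal{G}}$; if $\Phi(D)$ were invariant these identifications, and with them the final formula, would fail. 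A smaller error: separability of $\Phi$ does not give $p\nmid e$---finite separable morphisms can be wildly ramified along divisors (Artin--Schreier covers, for instance), so the monomial normal form is not available in general; the Jacobian-based computation sketched above avoids this and works uniformly.
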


\begin{proof} Let $\omega \in N_{\mathcal{F}}^{*}$ be a local section defining $\mathcal{F}$ in an open set $U$. Since the reduced part $\Phi(D)$ is not $\mathcal{F}$-invariant we have $\Phi^*\omega$ a local section of $N_{\mathcal{G}}^{*}$ on $\Phi^{-1}(U)$, so that $\Phi^*N_{\mathcal{F}} = N_{\mathcal{G}}$. Now, the formula $K_{X} = \Phi^{*}K_{Y}+D$ and the adjunction formula implies:
$
      K_{\mathcal{G}} + N_{\mathcal{G}}^{*} =  K_{X} = \Phi^*K_{\mathcal{F}}+\Phi^{*}N_{\mathcal{F}}^{*}+D$ so that $K_{\mathcal{G}} = \Phi^{*}K_{\mathcal{F}}+D.$
So, $$\Delta_{\mathcal{G}} = pK_{\mathcal{G}}+N_{\mathcal{G}} = p(\Phi^{*}K_{\mathcal{F}}+D)+\Phi^{*}N_{\mathcal{F}} = \Phi^{*}\Delta_{\mathcal{F}}+pD$$ and this proves the result.
\end{proof}

\begin{ex} As an explicit example we consider the Jouanolou foliation $\mathcal{F}$ of degree $d=2$ over a field $\corpo$ of characteristic $p=3$. The foliation $\mathcal{F}$ is defined by the projective $1$-form:
$$
  \Omega = (x^2z-y^3)dx+(xy^2-z^3)dy+(z^2y-x^{3})dz.
$$

The foliation $\mathcal{F}$ is not $3$-closed with irreducible $3$-divisor given by
$$
  \Delta_{\mathcal{F}} = \{x^7+y^7+z^7+2x^{4}yz^{2}+2x^{2}y^{4}z+2xy^{2}z^{4} = 0\} \in \Div(\mathbb{P}_{\corpo}^2).
$$
Now consider the finite map $\Phi\colon \mathbb{P}_{\corpo}^2\longrightarrow \mathbb{P}_{\corpo}^2$
which associe $[x_0:x_1:x_2] \mapsto [x_0^2:x_1^2:x_2^2]$ and consider the induced foliation $\mathcal{G} = \Phi^*\mathcal{F}$. Then $\mathcal{G}$ is defined by the projective $1$-form
$$
\Phi^{*}\Omega = 2x(x^4z^2-y^6)dx+2y(x^2y^4-z^6)dy+2z(z^4y^2-x^6)dz
$$
with $3$-divisor given by
$$
\Delta_{\mathcal{G}} = \{x^{14}+y^{14}+z^{14}+2x^8y^2z^4+2x^4y^8z^2+2x^2y^4z^8=0\}+3\{x=0\}+3\{y=0\}+3\{z=0\}
$$
$$
 = \Phi^*\Delta_{\mathcal{F}}+3D.
$$
\end{ex}

\begin{cor} \label{casoruim} There are foliations on the projective plane $\mathbb{P}_{\mathbb{C}}^2$ that have no algebraic solutions and are non-$p$-closed for almost all primes $\mathfrak{p}$ with $p$-divisor having a $p$-power.
\end{cor}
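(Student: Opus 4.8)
The plan is to exhibit such a foliation as a ramified pullback of the degree-two Jouanolou foliation, exactly as in the example above but now over $\mathbb{C}$. Let $\Phi\colon \mathbb{P}_{\mathbb{C}}^2\to\mathbb{P}_{\mathbb{C}}^2$ be the map $[x:y:z]\mapsto[x^2:y^2:z^2]$: it is finite, it is separable in every characteristic $\neq 2$, its ramification divisor is $D=\{x=0\}+\{y=0\}+\{z=0\}$, and it carries each coordinate line to itself, so the reduced part of $\Phi(D)$ is again the union of the three coordinate lines. I set $\mathcal{G}=\Phi^{*}\mathcal{J}_2$ and claim that $\mathcal{G}$ enjoys the three asserted properties.

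First I would record that the coordinate lines are not $\mathcal{J}_d$-invariant in any characteristic: $\omega\wedge dz$ contains the term $-y^{d+1}\,dx\wedge dz$, whose coefficient is a unit and does not vanish on $\{z=0\}$, so $\{z=0\}$ is non-invariant, and the cyclic symmetry $[x:y:z]\mapsto[y:z:x]$ of $\mathcal{J}_d$ disposes of the other two lines. For the absence of algebraic solutions over $\mathbb{C}$, recall that by Jouanolou's theorem $\mathcal{J}_2$ has no invariant curve over $\mathbb{C}$. Since $\Phi$ is finite, any irreducible $\mathcal{G}$-invariant curve $C$ is mapped onto a curve $\Phi(C)$ which is $\mathcal{J}_2$-invariant (for a coordinate line this is immediate, as $\Phi$ fixes it); this contradicts Jouanolou, so $\mathcal{G}$ has no invariant curve.

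Next comes the reduction statement. For all but finitely many $\mathfrak{p}$ --- at least $p\neq 2$, so that $\Phi$ stays separable, and $p\neq 7=d^2+d+1$, so that the singular scheme of $(\mathcal{J}_2)_{\mathfrak{p}}$ does not degenerate --- the reduced branch locus is still non-invariant by the previous paragraph, so the argument of Proposition \ref{pullll} applies verbatim to $\mathcal{G}_{\mathfrak{p}}=\Phi^{*}(\mathcal{J}_2)_{\mathfrak{p}}$ and yields
\[
\Delta_{\mathcal{G}_{\mathfrak{p}}}=\Phi^{*}\Delta_{(\mathcal{J}_2)_{\mathfrak{p}}}+pD,
\]
provided $(\mathcal{J}_2)_{\mathfrak{p}}$ is non-$p$-closed. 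In particular $\mathcal{G}_{\mathfrak{p}}$ is then non-$p$-closed and its $p$-divisor contains the $p$-power $pD=p(\{x=0\}+\{y=0\}+\{z=0\})$, which is exactly the asserted conclusion. Here I would stress that the derivation of this divisor formula in Proposition \ref{pullll} uses only non-$p$-closedness of the base together with the non-invariance of the reduced branch locus, and not the irreducibility of $\Delta_{(\mathcal{J}_2)_{\mathfrak{p}}}$.

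The hard part is making the phrase \emph{almost all primes} literal, i.e.\ guaranteeing that $(\mathcal{J}_2)_{\mathfrak{p}}$ is non-$p$-closed for a cofinite set of $\mathfrak{p}$. Theorem \ref{TeoremaB} supplies an irreducible, hence nonzero, $p$-divisor --- and therefore non-$p$-closedness --- only for the infinitely many primes with $7\nmid p+4$ and $p\not\equiv 1\bmod 3$. To upgrade this to almost all primes I would verify directly that the $p$-curvature $i_{v_{\omega}^{p}}\omega$ of $\mathcal{J}_2$ does not vanish identically whenever $p\nmid 28$; this is the genuine obstruction in the argument, and once it is settled the displayed formula delivers the corollary.
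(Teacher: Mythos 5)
Your construction reproduces the mechanism of the paper's proof --- pull back a foliation without algebraic solutions by a ramified finite map and invoke Proposition \ref{pullll} (and you are right that its proof uses only non-$p$-closedness of the base plus non-invariance of the reduced branch locus, not irreducibility of the $p$-divisor; the paper's own example even uses the very same squaring map applied to $\mathcal{J}_2$ at $p=3$). Your verification that the coordinate lines are never $\mathcal{J}_d$-invariant and that invariant curves push forward under the finite map are both fine. The genuine gap is exactly where you flag it, and it is not repairable along the route you sketch: the claim that the $p$-curvature of $\mathcal{J}_2$ is nonzero ``whenever $p\nmid 28$'' is an assertion about infinitely many primes, so it cannot be ``verified directly'' by any finite computation, and the only criterion available in the paper points the other way. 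By the proof of Lemma \ref{Jounaopfechada}, the eigenvalue ratio $\alpha$ at every singularity of $\mathcal{J}_2$ satisfies a quadratic with discriminant $-3d^2(d+2)^2/(d^2+d+1)^2$, and $-3$ \emph{is} a square mod $p$ precisely when $p\equiv 1 \bmod 3$; for those primes $\alpha \bmod p \in \mathbb{F}_p$ at every singularity, the $p$-reduced criterion for non-$p$-closedness is unavailable, and the lemma in fact asserts that $\mathcal{J}_d$ is not $p$-closed \emph{if and only if} $p\not\equiv 1 \bmod 3$. So your construction is only known to produce a $p$-power in the $p$-divisor for the primes covered by Lemma \ref{Jounaopfechada}/Theorem \ref{TeoremaB} --- an infinite set of density roughly $1/2$, not a cofinite one --- which proves a strictly weaker statement than the corollary's ``almost all primes.''

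The paper avoids this obstruction by \emph{not} using the Jouanolou foliation as the base: it takes any foliation $\mathcal{F}$ on $\mathbb{P}_{\mathbb{C}}^2$ without algebraic solutions having a singular point with eigenvalue $\alpha\not\in\overline{\mathbb{Q}}$ (a generic such foliation exists), and pulls back by an arbitrary ramified finite morphism. Transcendence of $\alpha$ is what makes ``non-$p$-closed for almost all primes $\mathfrak{p}$ of $\mathbb{Z}[\mathcal{F}]$'' go through, since $p$-closedness forces the reduced eigenvalue ratio into $\mathbb{F}_p$ at every nondegenerate singularity. The Jouanolou foliations have algebraic eigenvalues and so can never play this role; to salvage your version you would either have to prove non-$p$-closedness of $\mathcal{J}_2$ for $p\equiv 1\bmod 3$ (contradicting the equivalence stated in Lemma \ref{Jounaopfechada}) or replace $\mathcal{J}_2$ by a base foliation with a transcendental eigenvalue, which is precisely the paper's argument.
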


\begin{proof} Let $\Phi\colon \mathbb{P}_{\mathbb{C}}^{2}\longrightarrow \mathbb{P}_{\mathbb{C}}^{2}$ be a finite morphism and $\mathcal{F}$ be a foliation on $\mathbb{P}_{\mathbb{C}}^{2}$ that has no algebraic solutions. Assume that foliation has at least a singular point with eigenvalue $\alpha \not\in \overline{\mathbb{Q}}$. Then $\mathcal{G} = \Phi^*\mathcal{F}$ is not $p$-closed for an almost all primes $p$ and does not have any algebraic solution. But, if $D$ is  the ramification divisor of $\Phi$ then by the Proposition \ref{pullll} it follows that the $p$-divisor, $\Delta_{\mathcal{G}_{\mathfrak{p}}}$, has $D\mod \mathfrak{p}$ as a $p$-power. In particular, it is not reduced for almost all primes $p$ and having a $p$-power.
\end{proof}

\begin{OBS} The Corollary \ref{casoruim}
shows that the converse of the Proposition \ref{local_global} does not holds in general.
\end{OBS}

Let $\mathcal{F}$ be a foliation on $\mathbb{P}_{\corpo}^2$ and $C$ be a $\mathcal{F}$-invariant curve. In positive characteristic, the singular locus of the invariant curve $C$ is not always contained in the singular locus of the foliation.

\begin{ex}\label{exemplao} Let $\mathcal{F}$ be the foliation on the complex projective plane given by the projective 1-form (see \cite[Theorem 4]{MR4389502}):
$$
 \Omega = -(xyz+y^{3}+z^{3})dx+(x^{2}z+y^{2}x-yz^{2})dy+z(xz+y^{2})dz.
$$
Consider the foliation $\mathcal{G} = \mathcal{F}\otimes 5\mathbb{Z}$ the projective plane over $\overline{\mathbb{F}}_5$ obtained by reduction modulo $5\mathbb{Z}$ of the coefficients of $\Omega$. Then the $5$-divisor of $\mathcal{G}$ is irreducible given by:
$$
\Delta_{\mathcal{G}} = \{-2x^3z^6-2x^2y^2z^5+xy^7z+2xyz^7+y^9+y^6z^3+y^3z^6-z^9=0\} \in \Div(\mathbb{P}_{\overline{\mathbb{F}}_5}^2).
$$
The foliation $\mathcal{G}$ has a unique singular point given by $[0:1:0]$. On the other hand, the curve $\Delta_{\mathcal{G}}$ has $[1:2:1]$ as singular point so that $\sing(\Delta_{\mathcal{G}})\nsubseteq \sing(\mathcal{G})$. Note that $\deg(\Delta_{\mathcal{G}}) = 9 \equiv 4 \mod 5$.
\end{ex}

We can say some information about the structure of the points in $\sing(C)-\sing(\mathcal{F})$. This is the content of the following proposition.

\begin{prop}\label{singularbao} Let $\corpo$ be an algebraically closed field of characteristic $p\geq2$ and $\mathcal{F}$ be a non $p$-closed foliation on $\mathbb{P}_{\corpo}^2$. Suppose that there is an irreducible curve, $C$, that is $\mathcal{F}$-invariant with degree $\deg(C)<p$. Then, $\sing(C)\subset \sing(\mathcal{F})$ or every $q\in \sing(C)-\sing(\mathcal{F}) $ is a hypercusp.
\end{prop}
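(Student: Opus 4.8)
The plan is to argue entirely locally at a point $q\in\sing(C)-\sing(\mathcal{F})$, reading off the structure of $C$ at $q$ from the invariance equation together with the bound $\deg(C)<p$. Since $q\notin\sing(\mathcal{F})$, the foliation is regular at $q$ and is generated near $q$ by a vector field $v$ with $v(q)\neq 0$; in local coordinates $(x,y)$ centered at $q$ write $v=\partial_x+f\,\partial_y$. Let $C=\{F=0\}$ with $F\in\corpo[[x,y]]$, let $m=m_{q}(C)\geq 2$ be the local multiplicity, and write $F=F_m+F_{m+1}+\cdots$ with $F_m\neq 0$ the tangent cone. Because $C$ is $\mathcal{F}$-invariant, locally there is a cofactor $h$ with $v(F)=hF$.

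The heart of the argument is a comparison of lowest-order homogeneous parts in $v(F)=hF$. The right-hand side vanishes to order $\geq m$, while
\[
v(F)=\partial_x F+f\,\partial_y F=(\partial_x+f(q)\,\partial_y)F_m+(\text{order}\ge m),
\]
so the degree-$(m-1)$ part must vanish: $(\partial_x+f(q)\,\partial_y)F_m=0$. Thus $F_m$ is killed by the constant field $v(q)$. Choosing linear coordinates $(s,t)$ with $v(q)=\partial_s$ and $t=y-f(q)x$, the condition becomes $\partial_s F_m=0$, so $F_m\in\corpo[t,s^p]$. Since $\deg F_m=m\le\deg(C)<p$, no factor $s^p$ can occur in a form of degree $m$, and therefore $F_m=c\,t^m$ with $c\in\corpo^{*}$. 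Hence the tangent cone of $C$ at $q$ is the $m$-th power of a single line, namely the line $\{t=0\}$ through $q$ in the direction $v(q)$: the curve $C$ is tangent to $\mathcal{F}$ at $q$ and has exactly one tangent line. I would emphasize that this is precisely where $\deg(C)<p$ enters — in characteristic zero the same computation gives $F_m=c\,t^m$, but there one also forces the branch to be smooth, which is why in characteristic zero $\sing(C)\subseteq\sing(\mathcal{F})$; the $p$-th powers that first appear in degree $p$ are exactly what permits genuine singularities at regular points of the foliation.

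Having a single tangent line equal to the foliation direction is the main defining feature of a hypercusp, so this already yields the dichotomy for the tangent-cone part. To match the full local normal form, I would iterate the computation: blow up $q$, note that the strict transform of $C$ meets the exceptional divisor at the single point determined by the tangent line, that the pulled-back foliation remains regular there, and that the invariance equation is preserved; applying the same lowest-order comparison to the transform shows no second tangent direction or extra branch can be created, the obstruction to such a splitting being once more a $p$-th-power phenomenon excluded as long as the relevant multiplicities stay below $p$. Running this for every $q\in\sing(C)-\sing(\mathcal{F})$ gives the stated alternative.

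The step I expect to be the genuine obstacle is this last one. The positive-characteristic local analysis is delicate because formal leaves through a regular point need not be unique, so confirming that $C$ is unibranch and of the exact hypercusp type requires controlling all higher-order terms of $F$ through the invariance relation (or through the blow-up tower) and invoking the global bound $\deg(C)<p$ — for instance via an intersection-multiplicity or $\delta$-invariant estimate at $q$ — to forbid the appearance of additional branches. The tangent-cone computation in the second paragraph, by contrast, is robust and is the conceptual core of the proof.
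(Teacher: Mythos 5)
Your first two paragraphs are, in substance, exactly the paper's proof: the paper puts the foliation at $q$ into the formal normal form $v=\partial_x+x^{p-1}g(x,y)\partial_y$ with $g\in\corpo[[x^p,y]]$ (quoting a normal-form result at a regular point) and then compares lowest-order terms in the invariance equation $v(f)=fg$ to get $\partial_x(f_e)=0$, whence $f_e=y^e$ because $e\leq\deg(C)<p$; your version replaces the cited normal form by a linear change of coordinates straightening $v(q)$, which is the same comparison and is, if anything, more self-contained. Moreover the paper \emph{stops there}: the conclusion $f_e=l^e$ --- a unitangent singularity, with the tangent line in the leaf direction --- is precisely what ``hypercusp'' means in this context, so the whole blow-up program of your last two paragraphs, which you flag as the genuine obstacle, is not needed for the statement.

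A warning about that superfluous program, since you rest weight on it: as sketched, it would fail at the first step. After blowing up $q$, the pulled-back foliation does \emph{not} remain regular at the point where the strict transform of $C$ meets the exceptional divisor. That point of $E$ corresponds to the tangent direction $\{t=0\}$, i.e.\ the leaf direction, and a regular foliation $\omega=b\,dt+a\,ds$ with $b(q)\neq 0$, $a(q)=0$ pulls back under $t=su$ to $\bigl(a(s,su)+b\,u\bigr)ds+b\,s\,du$, whose saturation vanishes exactly at $(s,u)=(0,0)$; so the strict transform of $C$ passes through a genuine singular point of the transformed foliation, and your induction hypothesis (``the pulled-back foliation remains regular there'') is false. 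Fortunately, for the proposition as stated and as used in the paper only the tangent-cone computation is required, and that part of your argument is complete and correct.
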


\begin{proof} Suppose that there is a point $q\in \sing(C)-\sing(\mathcal{F})$. By the \cite[Corollary 2.4]{MR3687427} we know that there is a formal coordinate system $x,y$ on the completion of the ring $\mathcal{O}_{\mathbb{P}^2,q}$ such that the foliation is given by the vector field:
$$
v = \partial_x+x^{p-1}g(x,y)\partial_y
$$
with $g(x,y)\in \corpo[[x^p,y]]$. The curve in this coordinates system has form $f = f_e+f_{e+1}+\cdots\in \corpo[[x,y]]$ where $f_i$ is a homogeneous polynomial of degree $i$ in $x,y$. We need to show $f_e = l^e$ for some $l$, polynomial of degree $1$. Indeed, by definition of invariance we have $v(f) = fg$ for some $g\in \corpo[[x,y]]$  and considering the smallest term we obtain $\partial_x(f_e) =0$. Since, $e<\deg(C)<p$ we conclude that $f_e = y^e$.
\end{proof}

The difficult of the author in construct examples with $\sing(C)\not\subset \sing(\mathcal{F})$ and $\deg(C)<p$ leads to the following question.

\begin{pro} \label{invarianciasingular} Let $\corpo$ be an algebraically closed field of characteristic $p\geq 2$ and $\mathcal{F}$ be a non $p$-closed foliation on $\mathbb{P}_{\corpo}^2$. Suppose that there is an irreducible curve, $C$, that is $\mathcal{F}$-invariant with degree $\deg(C)<p$. Is it true that $\sing(C)\subset \sing(\mathcal{F})$?
\end{pro}

\begin{OBS} The Example \ref{exemplao} shows that the analogue of the Problem \ref{invarianciasingular} with the condition $\deg(C)\neq 0\mod p$ is not true.
\end{OBS}

\begin{prop} Assume that the answer for the Problem \ref{invarianciasingular} is YES. Let $\mathcal{F}$ be a holomorphic foliation on $\mathbb{P}_{\mathbb{C}}^2$ of degree two and suppose that it is non-degenerate with singularities having all eigenvalues not in $\mathbb{Q}$. Then, $\mathcal{F}$ has no algebraic solution if and only if $\mathcal{F}_{\mathfrak{p}}$ has irreducible $p$-divisor for infinitely many primes $\mathfrak{p}$.
\end{prop}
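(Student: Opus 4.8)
The plan is to treat the two implications separately, since one of them is already available. If $\Delta_{\mathcal{F}_{\mathfrak{p}}}$ is irreducible for infinitely many primes $\mathfrak{p}$, then $\mathcal{F}$ has no algebraic solution by Proposition~\ref{local_global}, which is exactly this implication. So the entire content lies in the converse, which I would prove by contradiction: assume $\mathcal{F}$ has no algebraic solution, yet $\Delta_{\mathcal{F}_{\mathfrak{p}}}$ fails to be a prime divisor for all but finitely many $\mathfrak{p}$.

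First I would make the $p$-divisor available with well-reduced singularities. Since $\mathcal{F}$ is non-degenerate it has exactly $d^2+d+1=7$ singular points, each with eigenvalue ratio $\lambda\notin\mathbb{Q}$. A Chebotarev/Frobenius density argument applied to the number field generated by these seven ratios produces infinitely many primes $\mathfrak{p}$ for which every $\lambda$ reduces to an element outside $\mathbb{F}_p$; for such $\mathfrak{p}$ each singularity of $\mathcal{F}_{\mathfrak{p}}$ is $p$-reduced, so $\mathcal{F}_{\mathfrak{p}}$ is non-$p$-closed by \cite[Lemma 3.11]{mendson2022foliations} and $\Delta_{\mathcal{F}_{\mathfrak{p}}}$ is a divisor of degree $p(d-1)+d+2=p+4$. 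Restricting to these primes and writing $\Delta_{\mathcal{F}_{\mathfrak{p}}}=A+pB$ with every multiplicity of $A$ in $\{1,\dots,p-1\}$, the identity $\deg A+p\deg B=p+4$ forces $\deg B\le 1$ and $A\neq 0$ once $p>4$. Every component of $A$ has multiplicity prime to $p$, hence is $\mathcal{F}_{\mathfrak{p}}$-invariant by \cite[Proposition 3.8]{mendson2022foliations}. When $\deg B=1$ we have $\deg A=4$, so some invariant component $C$ has $\deg C\le 4$; when $\deg B=0$ the divisor $\Delta_{\mathcal{F}_{\mathfrak{p}}}=A$ is not prime, hence has two components or a repeated one, and in either case an invariant component $C$ satisfies $\deg C\le (p+4)/2<p$. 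In all cases I obtain an irreducible $\mathcal{F}_{\mathfrak{p}}$-invariant curve $C$ with $\deg C<p$.

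The heart of the argument is a uniform bound $\deg C\le 4$. Because $\deg C<p$ and the answer to Problem~\ref{invarianciasingular} is assumed to be YES, we get $\sing(C)\subset\sing(\mathcal{F}_{\mathfrak{p}})$; since every singularity is $p$-reduced, its two local separatrices are smooth, so $C$ is at worst nodal and each branch meets $\mathcal{F}_{\mathfrak{p}}$ with tangency order one. Passing to the normalization $\nu\colon\widetilde C\to C$, the section of $\nu^{*}T_{\mathcal{F}_{\mathfrak{p}}}^{*}\otimes T_{\widetilde C}$ induced by $\mathcal{F}_{\mathfrak{p}}$ vanishes exactly over $\sing(\mathcal{F}_{\mathfrak{p}})\cap C$, so writing $e=\deg C$, $\delta$ for the number of nodes, $k=\#\bigl(\sing(\mathcal{F}_{\mathfrak{p}})\cap C\bigr)$, and using $g(\widetilde C)=\binom{e-1}{2}-\delta$ together with $d=2$, a count of zeros gives
$$
k+\delta \;=\; e+2-2g(\widetilde C) \;=\; e(4-e)+2\delta .
$$
Hence $\delta=k-e(4-e)$, and since each node is one of these $k$ points we have $\delta\le k$, forcing $e(4-e)\ge 0$, that is $\deg C\le 4$. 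I expect this to be the \emph{main obstacle}: although the displayed identity is a degree computation valid in any characteristic, justifying the tangency order one, the smoothness of the separatrices, and the non-vanishing of the induced section all rely on the $p$-reduced structure together with $\deg C<p$, and this is precisely where the arithmetic hypotheses must be used in full.

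Finally I would conclude by pigeonhole. The bound $\deg C\le 4$ holds for infinitely many primes $\mathfrak{p}$, so a single value $e\in\{1,2,3,4\}$ is realized for infinitely many of them; thus $\mathcal{F}_{\mathfrak{p}}$ has an invariant curve of degree $e$ for infinitely many primes. By Proposition~\ref{globalzao} this forces $\mathcal{F}$ to carry an invariant curve of degree $e$, contradicting the absence of algebraic solutions. Therefore $\Delta_{\mathcal{F}_{\mathfrak{p}}}$ must be irreducible for infinitely many $\mathfrak{p}$, which completes the converse and the proof.
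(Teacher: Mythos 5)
Your proposal is correct, and in outline it is the paper's own proof: the forward implication is delegated to Proposition \ref{local_global}; the converse goes by contradiction, extracting from a non-irreducible $p$-divisor an irreducible $\mathcal{F}_{\mathfrak{p}}$-invariant curve of degree $<p$ (your $\Delta_{\mathcal{F}_{\mathfrak{p}}}=A+pB$ bookkeeping is a more explicit version of the paper's two-case split $\deg(C_{\mathfrak{p}})\leq 4$ versus $\deg(C_{\mathfrak{p}})>4$), then invoking the YES answer to Problem \ref{invarianciasingular} plus $p$-reducedness to force nodal singularities, bounding the degree by $4$, and closing with pigeonhole and Proposition \ref{globalzao}. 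The one genuinely different ingredient is the degree bound itself: the paper simply cites \cite[Theorem 13]{MR1892310} for nodal invariant curves, whereas you reprove it by counting zeros of the induced section of $\nu^{*}T^{*}_{\mathcal{F}_{\mathfrak{p}}}\otimes T_{\widetilde C}$ on the normalization. Your identity checks out: the bundle has degree $2-2g+e(d-1)=e+2-2g$ for $d=2$, each of the $k-\delta$ smooth passage points through $\sing(\mathcal{F}_{\mathfrak{p}})$ contributes a simple zero and each node two, and $g(\widetilde C)=\binom{e-1}{2}-\delta$ then gives $e(4-e)=k-\delta\geq 0$, i.e.\ $e\leq 4$. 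What this buys is a self-contained argument whose validity in characteristic $p$ is transparent (simple vanishing along each separatrix follows from the nondegenerate linear part with ratio outside $\mathbb{F}_p$, exactly the point you flagged), at the cost of carrying those local justifications yourself; the citation outsources them. One caveat, which you share with the paper rather than introduce: both proofs use that for infinitely many primes \emph{all} eigenvalue ratios simultaneously reduce outside $\mathbb{F}_p$. For a single irrational algebraic number this is standard, but simultaneity is not automatic — e.g.\ $\sqrt{2},\sqrt{3},\sqrt{6}$ are never simultaneously outside $\mathbb{F}_p$ — so your Chebotarev step really needs a Galois element of the compositum moving every conjugate of every ratio; since the paper's proof is equally silent on this, it is not a gap relative to the paper's own standard, but it is the one place where your (and the paper's) argument is thinner than the rest.
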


\begin{proof} If the $p$-divisor is irreducible for infinitely many primes $\mathfrak{p}$ then the result follows from Proposition \ref{local_global}. So, suppose that $\mathcal{F}$ does not have algebraic solutions. Since $\mathcal{F}$ is non-degenerate with all eigenvalues not a element of $\mathbb{Q}$ we conclude that $\mathcal{F}_{\mathfrak{p}}$ is not $p$-closed for an infinitely many primes $\mathfrak{p}$. Suppose by contradiction that $\Delta_{\mathcal{F}_\mathfrak{p}}$ is not irreducible for infinitely many primes $\mathfrak{p}$. Then for each that prime $\mathfrak{p}$ there is $C_{\mathfrak{p}}$ an irreducible factor of $\Delta_{\mathcal{F}_\mathfrak{p}}$ that is $\mathcal{F}_\mathfrak{p}$-invariant with $\deg(C_{\mathfrak{p}})<\deg(\Delta_{\mathcal{F}_\mathfrak{p}})$. There are two possibilities:
  \begin{itemize}
      \item $\deg(C_\mathfrak{p})\leq 4$ for infinitely many primes $\mathfrak{p}$, or

      \item $\deg(C_\mathfrak{p})>4$ for infinitely many primes $\mathfrak{p}$: in that case, since $\deg(\Delta_{\mathcal{F}_\mathfrak{p}}) = p+4$ there is a irreducible $\mathcal{F}_\mathfrak{p}$-invariant curve $D_{\mathfrak{p}}$ of degree less than $p$.
  \end{itemize}
In any case we conclude that for infinitely many primes $\mathfrak{p}$ there is a $\mathcal{F}_\mathfrak{p}$-invariant curve $D_{\mathfrak{p}}$ of degree $\deg(D_{\mathfrak{p}})<p$. Since we are assuming the Problem \ref{invarianciasingular} we conclude that for an infinitely of many primes $\mathfrak{p}$ we have $\sing(D_{\mathfrak{p}}) \subset \sing(\mathcal{F}_\mathfrak{p})$. In particular, since each eigenvalue of $\mathcal{F}$ is not a rational number, we have that $\alpha \mod \mathfrak{p}$ is not an element of $\mathbb{F}_p$ for infinitely many primes $\mathfrak{p}$. In particular, the singularities of the curve $D_{\mathfrak{p}}$ are nodes. By the \cite[Theorem 13]{MR1892310} we conclude that $\deg(D_{\mathfrak{p}})\leq 4$. So, for infinitely many primes $\mathfrak{p}$ the foliation $\mathcal{F}_\mathfrak{p}$ admits an invariant curve of degree $\leq 4$. By Proposition \ref{globalzao} it follows that $\mathcal{F}$ has an invariant curve of degree $\leq 4$, a contradiction.
\end{proof}

\section{Automorphism and the $p$-divisor}\label{automorfismoIIII}

In this section, motivated by the work \cite{MR2156709} we introduce the notion of special pair and study its main properties. We investigate the influence of the automorphism group of foliations on the degree of invariant curves and we prove Theorem \ref{TeoremaA} and Theorem \ref{TeoremaB}.

\begin{prop}\label{autbao} Let $\mathcal{F}$ be a foliation of degree $d\geq 2$ non-$p$-closed on $\Projdois$ over a field of characteristic $p>0$. Suppose that $p<d$ and that there exist $\Phi$ an automorphism of $\mathcal{F}$ of order $e$. Let $C$ be an irreducible curve and invariant by $\mathcal{F}$ such that $\Phi^{l} C \neq \Phi^{h}C$ if $l\neq h$ for every $l,h\in \{0,\ldots,e-1\}$. If $e \geq d^2$ then $\deg(C) = 1$.
\end{prop}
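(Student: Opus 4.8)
The plan is to bound $\deg(\Delta_{\mathcal{F}})$ from below by the size of the $\Phi$-orbit of $C$ and from above by the known value $p(d-1)+d+2$, and then to force $\deg(C)=1$ by an elementary arithmetic comparison that exploits $p<d$.

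First I would record that $C$ is a prime component of the $p$-divisor: since $C$ is irreducible and $\mathcal{F}$-invariant, the basic property of the $p$-divisor (see \cite[Proposition 3.8]{mendson2022foliations}) gives $\ord_{\Delta_{\mathcal{F}}}(C)\geq 1$. Next, because $\Phi\in\Aut(\mathcal{F})$ and $\mathcal{F}$ is not $p$-closed, the Remark above yields $\Phi^{*}\Delta_{\mathcal{F}}=\Delta_{\mathcal{F}}$; hence $\Phi$ permutes the prime components of $\Delta_{\mathcal{F}}$ while preserving their orders. Consequently the whole orbit $\Phi^{0}C,\Phi^{1}C,\dots,\Phi^{e-1}C$ consists of prime components of $\Delta_{\mathcal{F}}$, each of order $\geq 1$ and each of degree $\deg(C)$. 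By hypothesis these $e$ curves are pairwise distinct, so they contribute at least $e\deg(C)$ to $\deg(\Delta_{\mathcal{F}})$:
$$
\deg(\Delta_{\mathcal{F}})\ \geq\ e\deg(C)\ \geq\ d^{2}\deg(C),
$$
the last step using $e\geq d^{2}$.

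On the other hand $\deg(\Delta_{\mathcal{F}})=p(d-1)+d+2$, and $p<d$ forces $p\leq d-1$, so
$$
\deg(\Delta_{\mathcal{F}})\ \leq\ (d-1)^{2}+d+2\ =\ d^{2}-d+3.
$$
Combining the two estimates gives $d^{2}\deg(C)\leq d^{2}-d+3$. If we had $\deg(C)\geq 2$ this would read $2d^{2}\leq d^{2}-d+3$, i.e. $d^{2}+d-3\leq 0$, which is false for every integer $d\geq 2$. Hence $\deg(C)=1$.

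I do not expect a genuine obstacle in this argument; the two points deserving care are the verification that $\Phi$ carries the prime component $C$ of $\Delta_{\mathcal{F}}$ onto prime components of the same divisor (so that the entire orbit sits inside the support), and the check that the $e$ orbit members are genuinely distinct, so that their degrees add without overcounting. The decisive feature is that the hypothesis $p<d$ pins $\deg(\Delta_{\mathcal{F}})$ strictly below $2d^{2}$, which is exactly the slack needed to exclude $\deg(C)\geq 2$.
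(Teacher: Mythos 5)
Your proof is correct and follows essentially the same route as the paper: both arguments place the $e$ pairwise distinct orbit curves $\Phi^{l}C$ inside $\Delta_{\mathcal{F}}$ via \cite[Proposition 3.8]{mendson2022foliations}, deduce $\deg(\Delta_{\mathcal{F}})\geq e\deg(C)\geq d^{2}\deg(C)$, and compare with $\deg(\Delta_{\mathcal{F}})=p(d-1)+d+2<d^{2}+2$ (using $p<d$) to force $\deg(C)=1$. The only cosmetic difference is that you route the orbit through the invariance $\Phi^{*}\Delta_{\mathcal{F}}=\Delta_{\mathcal{F}}$, whereas the paper applies the invariance criterion directly to each curve $\psi^{j}C$; the arithmetic conclusion is identical.
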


\begin{proof} Let $\psi = \Phi^{*}$ the isomorphism induced by $\Phi$ on the space of curves on $\mathbb{P}_{\corpo}^2$ of degree $\deg(C)$. This action is given via the pull-back by $\Phi$ of the polynomial equation defining the curve. Since $\Phi \in \Aut(\mathcal{F})$ and $ \psi^{h}C \neq \psi^{l}C$ for $h\neq l\in \{0,\ldots,e-1\}$ we have by \cite[Proposition 3.8]{mendson2022foliations} that
$$
\Delta_{\mathcal{F}}\geq C+\psi(C)+\cdots+\psi^{e-1}(C)  = \sum_{j=0}^{e-1}\psi^{j}C.
$$
In particular, $\deg(\Delta_{\mathcal{F}})\geq \sum_{j=0}^{e-1}\deg(\psi^{j}C)$ so that $\deg(\Delta_{\mathcal{F}}) \geq e\deg(C)$. Since $\deg(\Delta_{\mathcal{F}}) = p(d-1)+d+2$, $e\geq d^2$ and $p<d$ we get $d^2+2 = d(d-1)+d+2 > p(d-1)+d+2\geq d^2\deg(C)$. In particular, $2>d^2(\deg(C)-1)$ so that $\deg(C) = 1$. 
\end{proof}

We recall the following lemma (see \cite[Lemma 1]{MR2156709}).

\begin{lemma} Let $C$ be a $\mathcal{F}$-invariant curve. If $\corpo$ has characteristic $p>0$ suppose that $p$ does not divide $d+2$. Then there is an unique form $\beta_{\mathcal{F},C}$ such that
$$
  dF\wedge \omega = F\left(\left(\frac{\deg(C)}{d+2}\right)d\omega+i_{R}\beta_{\mathcal{F},C}\right)
$$
where $i_{R}$ denotes the contraction of $\beta_{\mathcal{F},C}$ with the radial vector field $R = x\partial_{x}+y\partial_y+z\partial_z$
\end{lemma}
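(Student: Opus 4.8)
The plan is to reduce the statement to two elementary ingredients: the Euler relations attached to the radial field $R = x\partial_x + y\partial_y + z\partial_z$, and the exactness of the contraction complex $i_R$ on forms over $\mathbb{A}^3_{\corpo}$. Write $m = \deg(C)$ and let $\Theta$ be the homogeneous $2$-form, of coefficient-degree $d$, with $dF\wedge\omega = F\Theta$; its existence is exactly the invariance of $C$, and it is unique because $F$ is a nonzerodivisor in $\corpo[x,y,z]$. The assertion then amounts to decomposing $\Theta - \frac{m}{d+2}d\omega$ as $i_R\beta$ for a unique $3$-form $\beta$, and this is where the hypothesis $p\nmid d+2$ intervenes: it is precisely what makes the scalar $\frac{m}{d+2}$ meaningful in characteristic $p$.

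First I would record three contraction formulas. Since $\omega$ comes from a projective $1$-form, $i_R\omega = 0$; Euler's identity for the homogeneous polynomial $F$ gives $i_R\,dF = mF$; and since $\omega$ has coefficients of degree $d+1$, the Lie derivative satisfies $L_R\omega = (d+2)\omega$, so Cartan's formula $L_R = i_R\,d + d\,i_R$ together with $i_R\omega = 0$ yields $i_R\,d\omega = (d+2)\omega$ (all three are polynomial identities, valid in any characteristic). Contracting the invariance relation with $R$ and using $i_R(dF\wedge\omega) = (i_R\,dF)\,\omega - dF\,(i_R\omega) = mF\omega$ gives $F\,i_R\Theta = mF\omega$, hence $i_R\Theta = m\omega$ after cancelling $F$.

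Next I would combine these: $i_R\bigl(\Theta - \frac{m}{d+2}d\omega\bigr) = m\omega - \frac{m}{d+2}(d+2)\omega = 0$, so the $2$-form $\eta := \Theta - \frac{m}{d+2}d\omega$ lies in the kernel of $i_R$. The remaining point is the claim that every homogeneous $2$-form $\eta$ with $i_R\eta = 0$ equals $i_R\beta$ for a unique homogeneous $3$-form $\beta$. Writing $\eta = P\,dy\wedge dz - Q\,dx\wedge dz + S\,dx\wedge dy$, a direct computation gives $i_R\eta = (Qz - Sy)\,dx + (Sx - Pz)\,dy + (Py - Qx)\,dz$, so $i_R\eta = 0$ forces the triple $(P,Q,S)$ to be proportional to $(x,y,z)$. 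Since $x,y,z$ are pairwise coprime in $\corpo[x,y,z]$, the relation $Py = Qx$ forces $x\mid P$, and chasing the remaining equations yields $P = gx$, $Q = gy$, $S = gz$ for a single homogeneous $g$ of degree $d-1$; then $\eta = i_R(g\,dx\wedge dy\wedge dz)$. Setting $\beta_{\mathcal{F},C} = g\,dx\wedge dy\wedge dz$ proves existence, and uniqueness follows because $i_R$ is injective on $3$-forms (if $g\neq 0$ then $i_R(g\,dx\wedge dy\wedge dz)\neq 0$).

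I expect the only genuine content to be this last Koszul-type exactness of the $i_R$-complex, which is where the geometry of the radial vector field enters; everything else is the bookkeeping of Euler relations, with the single arithmetic caveat that $d+2$ must be invertible modulo $p$.
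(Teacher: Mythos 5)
Your proof is correct, and its skeleton is the same as the paper's: both reduce the lemma to exactness of the contraction (Koszul) complex of the radial field $R$ on $\corpo[x,y,z]$. Two differences are worth recording. First, where the paper simply invokes exactness of the Koszul complex for the regular sequence $(x,y,z)$ (citing the Stacks Project), you prove the needed portion by hand --- $\ker(i_R)=\operatorname{im}(i_R)$ on $2$-forms plus injectivity of $i_R$ on $3$-forms --- via the coprimality argument in the UFD $\corpo[x,y,z]$; this makes the argument self-contained, and your explicit verification of the Euler identities $i_R\omega=0$, $i_R\,dF=\deg(C)\,F$ and $i_R\,d\omega=(d+2)\omega$ supplies the computation behind the paper's bare assertion ``note that $i_R\sigma=0$,'' making visible exactly where $p\nmid d+2$ enters. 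Second, and more substantively, you cancel $F$ at the right moment: contracting $dF\wedge\omega=F\Theta$ gives $i_R\Theta=\deg(C)\,\omega$, and you then apply exactness to $\eta=\Theta-\frac{\deg(C)}{d+2}\,d\omega$, obtaining $\beta_{\mathcal{F},C}$ directly. The paper instead applies exactness to $\sigma=dF\wedge\omega-\frac{\deg(C)}{d+2}F\,d\omega=F\eta$, which produces a $3$-form $\beta$ satisfying $i_R\beta=F\eta$ rather than $i_R\beta_{\mathcal{F},C}=\eta$ as the statement requires; to reconcile the two one must still note that $F$ divides $\beta$ (for instance: $\eta=i_R\beta'$ by exactness, so $i_R(F\beta')=F\eta=i_R\beta$, and injectivity of $i_R$ on $3$-forms gives $\beta=F\beta'$), a step the paper leaves implicit. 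Your ordering of the cancellation sidesteps this small gap entirely, at the mild cost of also recording (as you do) the existence and uniqueness of $\Theta$, which is immediate since $F$ is a nonzerodivisor.
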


\begin{proof} We recall the proof. The sequence $(x,y,z)$ of the polynomial ring $R = \corpo[x,y,z]$ is a regular sequence. From this, we conclude by \cite[\href{https://stacks.math.columbia.edu/tag/062F}{Tag 062F}]{stacks-project} that the Koszul complex induced by the radial vector field
$$
        \begin{tikzcd}
            0 \arrow[]{r}{} & \bigwedge^{3}R^{3} \arrow[]{r}{i_{R}} & \bigwedge^{2}R^{3} \arrow[]{r}{i_{R}} & R^{3} \arrow[]{r}{i_{R}} & R \arrow[]{r}{}&  0
        \end{tikzcd}
    $$
is exact. Now, consider the element $\sigma := dF\wedge \omega - (\frac{\deg(C)}{d+2})Fd\omega \in \bigwedge^{2}R^{3}$ and note that $i_{R}\sigma = 0$, so that there is an unique $\beta \in \bigwedge^{3}R^{3}\cong R$ such that $i_{R}\beta = \sigma$.
\end{proof}

\begin{dfn}\label{parspecial} Let $\mathcal{F}$ be a foliation on $\Projdois$ and $C$ be a $\mathcal{F}$-invariant curve. We say that the pair $(\mathcal{F}, C)$ is \textbf{special} if $\beta_{\mathcal{F},C} = 0$.
\end{dfn}

The degree of the pair $(\mathcal{F},C)$ is denoted by $(d,e)$ where $d$ is the degree of $\mathcal{F}$ and $e$ is the degree of $C$.

\begin{ex} If $\corpo$ is a field of characteristic $p>0$ and $\mathcal{F}$ is a non-$p$-closed foliation on $\mathbb{P}_{\corpo}^{2}$ with reduced $p$-divisor then $(\mathcal{F},\Delta_{\mathcal{F}})$ is special. This follows from \cite[Theorem 6.2]{MR2324555}
\end{ex}

\begin{prop}\label{speciallll} Let $(\mathcal{F},C)$ be a special pair of degree $(d,e)$ and suppose that $\mathcal{F}$ admits only $p$-reduced singularities. Then $\sing(\mathcal{F}) \subset C$ and $m_q(C) = 2$ for all $q\in \sing(\mathcal{F})$.
\end{prop}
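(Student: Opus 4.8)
The plan is to reduce the statement to a local computation at each $q \in \sing(\mathcal{F})$ and to exploit the cofactor form of the special condition. By the lemma defining $\beta_{\mathcal{F},C}$, the pair $(\mathcal{F},C)$ being special means exactly
$$
dF\wedge\omega = \frac{e}{d+2}\,F\,d\omega ,
$$
so I set $c := \frac{e}{d+2}\in\corpo$, which lies in the prime field since $p\nmid d+2$. Dehomogenizing in an affine chart, where $\mathcal{F}$ is generated by the tangent vector field $v$, this identity becomes the cofactor relation $v(f) = c\,\operatorname{div}(v)\,f$ with $f$ the local equation of $C$. First I would record that $c\neq 0$: if $c=0$ then $dF\wedge\omega = 0$, so $\omega$ is proportional to $dF$ over the function field and $F$ is a rational first integral of $\mathcal{F}$; such a foliation is $p$-closed, contradicting the fact recalled in the text that a foliation carrying a $p$-reduced singularity is not $p$-closed.

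Next I would prove $\sing(\mathcal{F})\subset C$. Evaluating $dF\wedge\omega = c\,F\,d\omega$ at $q$, the left-hand side vanishes because $\omega(q)=0$ at a singular point. Since $q$ is $p$-reduced its linear part is nonzero, whence $d\omega(q)\neq 0$ (indeed $d(y\,dx+\alpha x\,dy)=(\alpha-1)\,dx\wedge dy\neq 0$ as $\alpha\neq 1$), and as $c\neq 0$ we conclude $F(q)=0$, i.e. $q\in C$.

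For the multiplicity I would fix $q$ and use the $p$-reduced normal form: in suitable local coordinates $\mathcal{F}$ is generated by $v=v_1+(\text{higher order})$ with linear part $v_1=\alpha x\partial_x-y\partial_y$ and $\alpha\notin\mathbb{F}_p$, so $\operatorname{div}(v)(q)=\alpha-1$. Writing $f=f_m+f_{m+1}+\cdots$ with $f_m\neq 0$ homogeneous of degree $m=m_q(C)$, the lowest-order part of $v(f)=c\,\operatorname{div}(v)\,f$ reads $v_1(f_m)=c(\alpha-1)f_m$. On degree-$m$ monomials $v_1$ acts diagonally by $x^iy^j\mapsto(\alpha i-j)x^iy^j$, and because $\alpha\notin\mathbb{F}_p$ these eigenvalues are pairwise distinct; hence $f_m=x^iy^j$ is a single monomial with $\alpha i-j=c(\alpha-1)$. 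Comparing the coefficients of $1$ and $\alpha$ (independent over $\mathbb{F}_p$) gives $\bar i=\bar j=c$ in $\mathbb{F}_p$, and since $c\neq 0$ we get $i,j\geq 1$, so already $m\geq 2$.

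Finally I would pin down $m=2$ using the separatrix structure of a $p$-reduced (non-resonant) singularity: because $\alpha\notin\mathbb{F}_p$ there are exactly two germs of invariant curve through $q$, the smooth separatrices tangent to $\{x=0\}$ and $\{y=0\}$, so a \emph{reduced} invariant curve has tangent cone among $1,\,x,\,y,\,xy$, i.e. $(i,j)\in\{(0,0),(1,0),(0,1),(1,1)\}$. Together with $\bar i=\bar j=c\neq 0$ this forces $(i,j)=(1,1)$, whence $c=1$, $f_m=xy$, and $m_q(C)=2$; equivalently, matching the leading term $c(\alpha-1)$ of the cofactor against the admissible leading terms $\alpha,\,-1,\,\alpha-1$ of the three curve germs singles out the node. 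The main obstacle is exactly this last step: the eigenvalue equation alone does not suffice, since monomials such as $x^{1+p}y^{1+p}$ satisfy $\alpha i-j=c(\alpha-1)$ with $c=1$ as well; one genuinely needs the reducedness of $C$ together with the uniqueness of a smooth separatrix in each eigendirection — the positive-characteristic analogue of the reduced-singularity separatrix theorem — to exclude higher tangency and conclude $i=j=1$.
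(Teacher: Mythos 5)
Your preliminary steps are sound, and in places cleaner than the paper's own treatment: observing that $c=e/(d+2)\neq 0$ in $\corpo$ (via rational first integral $\Rightarrow$ $p$-closed, contradicting the existence of a $p$-reduced singularity) patches a point the paper leaves implicit, since its own contradiction $e f_0(\alpha-1)=0\Rightarrow\alpha=1$ silently assumes $p\nmid e$; evaluating $dF\wedge\omega=cF\,d\omega$ at $q$ gives $\sing(\mathcal{F})\subset C$ just as the paper's $\sigma=df/f$ argument does; and your eigenvalue computation $\bar i=\bar j=c\neq 0$ in $\mathbb{F}_p$, forcing $i,j\geq 1$ and hence $m_q(C)\geq 2$, is the vector-field version of the paper's expansion showing $f_1=0$. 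One minor slip: the eigenvalues $\alpha i-j$ on degree-$m$ monomials are \emph{not} pairwise distinct once $m\geq p$ (the pairs $(i,j)$ and $(i+p,j-p)$ collide), so you cannot conclude $f_m$ is a single monomial; but this is harmless for the lower bound, since $v_1$ acts diagonally and therefore \emph{every} monomial occurring in $f_m$ satisfies $\alpha i-j=c(\alpha-1)$, which already yields $i,j\geq 1$.

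The genuine gap is the one you flag yourself: the upper bound $m_q(C)\leq 2$. You invoke ``exactly two germs of invariant curve through $q$, the smooth separatrices tangent to the eigendirections,'' but no such separatrix theorem is available in characteristic $p$, and the leading-term analysis cannot substitute for it: as you concede, tangent cones such as $x^{1+p}y$ or $xy^{1+p}$ pass the eigenvalue test with $c=1$. Worse, a $p$-reduced singularity with $\alpha\notin\mathbb{F}_p$ still carries resonances modulo $p$ (e.g. $\alpha(i-1)=j$ in $\corpo$ whenever $p\mid i-1$ and $p\mid j$), so it need not be formally linearizable, and excluding invariant branches of higher tangency is precisely the unproven hard point --- the paper's Example 4.9 and Problem 4 show the author is well aware that invariant curves in characteristic $p$ can behave pathologically. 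The paper closes this step by an entirely different, global mechanism that bypasses separatrix theory altogether: since $C$ is $\mathcal{F}$-invariant, $C\leq\Delta_{\mathcal{F}}$ by \cite[Proposition 3.8]{mendson2022foliations}, and at a $p$-reduced singularity the $p$-divisor has multiplicity exactly $2$ by \cite[Lemma 3.11]{mendson2022foliations}; hence $m_q(C)\leq m_q(\Delta_{\mathcal{F}})=2$, which together with your lower bound gives $m_q(C)=2$. Replacing your final paragraph with this two-line argument repairs the proof; as written, the proposal is incomplete at exactly the step it identifies as its main obstacle.
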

\begin{proof} Let $q\in \sing(\mathcal{F})$ and suppose by contradiction that $q \not \in C$.  By a change of coordinates we can assume that $q = [0:0:1]$. Let $\Omega$ be a projective $1$-form defining $\mathcal{F}$ and $F\in \corpo[x,y,z]_e$ a homogeneous polynomial defining $C$. Since $(\mathcal{F},C)$ is special there is a relation:
$(d+2)\cdot dF\wedge \Omega = e\cdot Fd\Omega$. Since we are assuming that $q\not \in C$ in the affine open subset $D_{+}(z) \cong \mathbb{A}_{\corpo}^2$ the have the equation $(d+2)\cdot df \wedge \omega = e\cdot fd\omega$ with
$$
\omega = \omega_1+\omega_2+\cdots \qquad, \qquad f = f_0+f_1+f_2+\cdots
$$
where $\omega_i = a_i(x,y)dx+b(x,y)dy$ with $a_i,b_i$ homogeneous polynomials of degree $i$, $f_h$ homogeneous polynomial of degree $h$, $\omega_1 = ydx+\alpha xdy$, $f_0\in \corpo^*$ and $\alpha \in \corpo-\mathbb{F}_p$. In particular, $\sigma := df/f \in \Omega_{\mathbb{P}_{\mathbb{C}}^2,q}^{1}$, i.e., it is regular at point $q$. So, if $\mathcal{M}_{\mathbb{P}_{\corpo}^2,q}$ is the maximal ideal of the point $q$ in $\mathbb{P}_{\corpo}^2$ we have
$$
0 = (d+2)\cdot \sigma\wedge \omega \mod \mathcal{M}_{\mathbb{P}_{\mathbb{C}}^2,q} = e\cdot f_0 d\omega_1  \mod \mathcal{M}_{\mathbb{P}_{\mathbb{C}}^2,q} = e\cdot f_0 (\alpha -1) dx\wedge dy \mod \mathcal{M}_{\mathbb{P}_{\mathbb{C}}^{2},q}
$$
so that $\alpha = 1$. But, this is a contradiction since we are assuming that $q$ is $p$-reduced. So, $q \in C$. Now, suppose that $f_1\neq 0$, that is, $q$ is a regular point of $C$ and write $f_1 = ux+vy$ where $u\in \corpo^*$ or $v\in\corpo^*$. Considering the smallest term in the equation $(d+2)\cdot df \wedge \omega = e\cdot fd\omega$ we get
$$
(d+2)\cdot df_1\wedge \omega_1 = e\cdot f_1d\omega_1
$$
and by expansion we get two equations:
$$
 u\alpha (d+2) = ue(\alpha-1) \qquad, \qquad -v(d+2) = ev(\alpha-1).
$$
Now, if $u\neq 0$ then we conclude that $1/\alpha =  (1-(d+2)/e)$. In particular, $\alpha \in \mathbb{F}_p$, a contradiction. If $v\neq 0$ then $\alpha = 1-(d+2)/e \in\mathbb{F}_p$, a contradiction. So, we conclude that $u = v =0$ so that $f_1 = 0$. In particular, $\sing(\mathcal{F}) \subset \sing(C)$. Now since $q$ is $p$-reduced by \cite[Lemma 3.11]{mendson2022foliations} we conclude that $m_q(\Delta_{\mathcal{F}}) = 2$ and since $C\leq \Delta_{\mathcal{F}}$ we get $m_q(C)=2$. This ends the proof of the proposition.
\end{proof}

\begin{cor} \label{grauspecial} Assume that $\corpo$ has characteristic $p>2$. Let $(\mathcal{F},C)$ be a special pair of degree $(d,e)$ and suppose that $\mathcal{F}$ admits at least a $p$-reduced singularity. Then $\deg(C) \equiv d+2\mod p$.
\end{cor}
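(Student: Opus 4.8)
The plan is to localize the special relation at a single $p$-reduced singularity and extract one extra coefficient identity beyond what Proposition \ref{speciallll} already records. Since $(\mathcal{F},C)$ is special, the defining relation for $\beta_{\mathcal{F},C}$ collapses to $(d+2)\,dF\wedge\Omega=e\,F\,d\Omega$. By hypothesis there is a $p$-reduced singularity $q$; I would choose affine coordinates with $q=[0:0:1]$ and write, exactly as in the proof of Proposition \ref{speciallll}, $\omega=\omega_1+\omega_2+\cdots$ with $\omega_1=y\,dx+\alpha x\,dy$ and $\alpha\in\corpo\setminus\mathbb{F}_p$, together with $f=f_0+f_1+\cdots$ for the local equation of $C$.

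First I would harvest the qualitative facts already present in that proof: the very same local computation (which uses only that $q$ is $p$-reduced) gives $q\in C$ and $f_1=0$. Next, a foliation with a $p$-reduced singularity is not $p$-closed by \cite[Lemma 3.11]{mendson2022foliations}, so $C\leq\Delta_{\mathcal{F}}$ by \cite[Proposition 3.8]{mendson2022foliations}; since $m_q(\Delta_{\mathcal{F}})=2$ at a $p$-reduced point, this pins $m_q(C)=2$, i.e. $f=f_2+f_3+\cdots$ with $f_2\neq0$. Forcing $m_q(C)$ to be exactly $2$ is what produces the clean congruence: with a higher-multiplicity lowest term one would only obtain $e\equiv(d+2)i$ for some index $i$, so this step is essential rather than cosmetic.

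Then I would compare lowest-order terms in $(d+2)\,df\wedge\omega=e\,f\,d\omega$. Using $d\omega_1=(\alpha-1)\,dx\wedge dy$ and $df_2\wedge\omega_1=(\alpha x\,\partial_x f_2-y\,\partial_y f_2)\,dx\wedge dy$, the degree-$2$ part of the coefficient of $dx\wedge dy$ gives
$$(d+2)\bigl(\alpha x\,\partial_x f_2-y\,\partial_y f_2\bigr)=e(\alpha-1)f_2.$$
Writing $f_2=ax^2+bxy+cy^2$ and separating the monomials $x^2,xy,y^2$ converts this into the three scalar equations $\bigl((2(d+2)-e)\alpha+e\bigr)a=0$, $(\alpha-1)\bigl((d+2)-e\bigr)b=0$, and $\bigl(e\alpha-(e-2(d+2))\bigr)c=0$.

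I expect the decisive step to be ruling out $a\neq0$ and $c\neq0$. In the first and third equations the bracket is an $\mathbb{F}_p$-affine function of $\alpha$; since $\alpha\notin\mathbb{F}_p$ it can vanish only if both of its $\mathbb{F}_p$-coefficients vanish, and that would force $2(d+2)\equiv e\equiv0\pmod p$, hence $d+2\equiv0\pmod p$ because $p>2$---contradicting $p\nmid d+2$ (which is exactly the hypothesis under which $\beta_{\mathcal{F},C}$ is defined). Thus $a=c=0$ and $f_2=bxy$ with $b\neq0$, and the middle equation together with $\alpha\neq1$ yields $(d+2)-e\equiv0$, i.e. $\deg(C)=e\equiv d+2\pmod p$. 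The only real subtlety is to place the hypotheses $p>2$ and $p\nmid d+2$ precisely where the degenerate cases of the first and third equations would otherwise survive.
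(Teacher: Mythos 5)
Your proposal is correct and follows essentially the same route as the paper's proof: localize at the $p$-reduced singularity $q=[0:0:1]$, use Proposition \ref{speciallll} (whose argument, as you rightly note, is local and needs only that $q$ itself is $p$-reduced) to write $f = f_2 + O(3)$, compare lowest-order terms in $(d+2)\,df\wedge\omega = e\,f\,d\omega$, and split the resulting identity into the three monomial equations, ruling out the $x^2$ and $y^2$ coefficients because $\alpha\notin\mathbb{F}_p$ and reading off $e\equiv d+2 \pmod p$ from the $xy$ equation. The only difference is cosmetic in the right direction: you explicitly dispatch the degenerate subcase $2(d+2)\equiv e\equiv 0\pmod p$ using $p>2$ and the hypothesis $p\nmid d+2$ built into the definition of $\beta_{\mathcal{F},C}$, a case the paper's ``contradiction since $\alpha\notin\mathbb{F}_p$'' leaves implicit.
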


\begin{proof} Let $q$ be a $p$-reduced singularity of $\mathcal{F}$. Without loss of generality, we can assume that $q = [0:0:1]$. In the affine open set $D_{+}(z)$ the foliation is given by a $1$-form $\omega = \omega_1+\omega_2+O(2)$ and by Proposition \ref{speciallll} the curve given by $f = f_2+O(3)$ with $f_2 = a_1x^2+a_2xy+a_3y^2$ with $a_i\neq 0$ for some $i$. Considering the smallest term in the equation $(d+2)\cdot df\wedge \omega = e\cdot fd\omega$ we obtain:
$$
(d+2)df_2\wedge \omega_1 = e\cdot f_2d\omega_1
$$
and expanding that equation we obtain the following equations over the field $\corpo$:
$$
2a_1\alpha (d+2) = a_1e(\alpha -1)\quad,\quad a_2e(\alpha-1) = (d+2)a_2(\alpha-1) \quad\mbox{and}\quad a_3e(\alpha -1)= -2(d+2)a_3.
$$
If $a_1\neq 0$ or $a_3\neq 0$ then we get a contradiction since $\alpha \not \in \mathbb{F}_p$. So, $a_1 = a_3 = 0$ and we conclude that $a_2\neq 0$. In particular, we obtain $d+2 = e$ identity over the field $\corpo$. This proves the corollary.
\end{proof}

\begin{OBS} The proof of the Corollary \ref{grauspecial} shows that all singularities of $C$ along $\sing(\mathcal{F})$ are nodal.
\end{OBS}

\begin{OBS}\label{retasJOU} It follows from Corollary \ref{grauspecial} that a foliation, $\mathcal{F}$, defined over a field of characteristic $p>0$ has a $p$-reduced singular point then $\mathcal{F}$ does not have any invariant lines.
\end{OBS}

In the next proposition, we use the fact that a $p$-reduced foliation of degree $d$ on the projective plane has $d^2+d+1$ distict singularities. This follows from the computations in \cite[Proposition 2.1]{MR3328860}.

\begin{lemma} \label{speciallll2} Assume that $\corpo$ has characteristic $p>2$. Let $\mathcal{F}$ be a non $p$-closed foliation of degree two on the projective plane $\mathbb{P}_{\corpo}^2$ and suppose that $\mathcal{F}$ is $p$-reduced. Let $C = \{F = 0\}$ be an irreducible $\mathcal{F}$-invariant curve with $\deg C < \deg \Delta_{\mathcal{F}}$. Then $(\mathcal{F},C)$ is not special.
\end{lemma}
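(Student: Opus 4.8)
The plan is to argue by contradiction: assume the pair $(\mathcal{F},C)$ is special and then exhibit an impossible curve. The first step is to pin down the degree of $C$. Since $\mathcal{F}$ is $p$-reduced it has (at least) one $p$-reduced singularity, so Corollary \ref{grauspecial} applies and gives $\deg(C)\equiv d+2 = 4\mod p$. On the other hand the hypothesis gives $1\le \deg(C)<\deg\Delta_{\mathcal{F}} = p(d-1)+d+2 = p+4$. For $p\ge 5$ the only integer in $[1,p+4)$ congruent to $4$ modulo $p$ is $4$ itself; for $p=3$ the congruence reads $\deg(C)\equiv 1\mod 3$ and the only candidates below $7$ are $1$ and $4$. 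In this last case Remark \ref{retasJOU} rules out $\deg(C)=1$, because a foliation with a $p$-reduced singular point has no invariant line. Hence in every case $\deg(C) = 4$, so $C$ is an irreducible plane quartic.

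Next I would invoke Proposition \ref{speciallll}. Because $\mathcal{F}$ is $p$-reduced, \emph{all} of its singularities are $p$-reduced, so the proposition applies to the special pair $(\mathcal{F},C)$ and yields $\sing(\mathcal{F})\subset C$ together with $m_q(C) = 2$ for every $q\in\sing(\mathcal{F})$. Using the fact recalled above that a $p$-reduced foliation of degree $2$ on the plane has exactly $d^2+d+1 = 7$ distinct singular points, we obtain $7$ distinct points of $C$, each of multiplicity $2$.

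The final step is a genus count. An irreducible plane quartic has arithmetic genus $p_a = (4-1)(4-2)/2 = 3$, and its geometric genus satisfies $g = p_a-\sum_q\delta_q$, where $\delta_q\ge \binom{m_q}{2}$ is the delta invariant of $C$ at $q$ (a relation valid over any algebraically closed field). Each of the $7$ double points contributes $\delta_q\ge 1$, so $g\le 3-7 = -4 < 0$, which is absurd since the geometric genus of a curve is non-negative. This contradiction shows that $(\mathcal{F},C)$ cannot be special.

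I expect the genuine content, rather than a serious obstacle, to lie in the bookkeeping of the first step: making sure the congruence $\deg(C)\equiv 4\mod p$ together with the strict bound $\deg(C)<p+4$ forces $\deg(C)=4$, in particular in the small-characteristic case $p=3$ where Remark \ref{retasJOU} is needed to discard the invariant line. Once the degree is fixed, the contradiction is immediate from the standard genus--degree inequality applied to the $7$ forced double points.
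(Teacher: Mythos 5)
Your proof is correct and takes essentially the same route as the paper: Corollary \ref{grauspecial} pins down $\deg(C)=4$, Proposition \ref{speciallll} forces the $7$ singular points of the $p$-reduced degree-two foliation to be double points of $C$, and the classical bound $\sum_{q}m_q(C)(m_q(C)-1)/2\le (4-1)(4-2)/2=3$ for an irreducible quartic (which the paper cites directly and you rederive via the genus formula $0\le g=p_a-\sum_q \delta_q$ with $\delta_q\ge\binom{m_q}{2}$, valid in any characteristic) yields the contradiction $7\le 3$. If anything, your handling of the case $p=3$, where the residual possibility $\deg(C)=1$ must be excluded via Remark \ref{retasJOU}, is slightly more careful than the paper's, which asserts $\deg(C)\in\{4,4+p\}$ without comment.
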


\begin{proof} Suppose by contradiction that $(\mathcal{F},C)$ is special. The $p$-divisor has the form $\Delta_{\mathcal{F}} = C+R$ and by the Corollary \ref{grauspecial} we know that $\deg(C)\in \{4, 4+p\}$ and since $\deg(C)<\deg(\Delta_{\mathcal{F}})$ we conclude $\deg(C) = 4$. Proposition \ref{speciallll} ensures that $\sing(\mathcal{F})\subset \sing(C)$ and $m_{q}(C) = 2$ for all $q\in \sing(\mathcal{F})$. Now, by  \cite[Theorem 2, page 60]{MR0313252} we have
$$
  7 = \#\sing(\mathcal{F}) \leq \sum_{q\in \sing(C)} \frac{m_{q}(C)(m_{q}(C)-1)}{2} \leq \frac{(4-1)(4-2)}{2} = 3
$$
a contradiction. So, $(\mathcal{F},C)$ is not special.
\end{proof}

\begin{lemma}\label{Jounaopfechada} Let $p>2$ be a prime number such that $p\not \equiv 1 \mod 3$. Let $d\in \mathbb{Z}_{>1}$ be an integer and suppose that $p\nmid d(d+2)(d^2+d+1)$. Then the Jouanolou foliation of degree $d$, $\mathcal{J}_d$, over $\corpo$ is not $p$-closed.
\end{lemma}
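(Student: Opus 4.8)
The plan is to exhibit a single $p$-reduced singularity of $\mathcal{J}_d$ and then quote the criterion recalled in Section~\ref{pdivisoremP2}: a foliation having at least one $p$-reduced singular point is automatically non-$p$-closed (\cite[Lemma 3.11]{mendson2022foliations}). Thus the statement reduces to a local eigenvalue computation at one conveniently chosen singular point, and the arithmetic hypotheses on $p$ will enter only through a quadratic-residue condition.

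First I would locate a singular point. Writing $\mathcal{J}_d$ as $\omega=(x^dz-y^{d+1})dx+(xy^d-z^{d+1})dy+(yz^d-x^{d+1})dz$, one sees immediately that $q_0=[1:1:1]$ lies in $\sing(\mathcal{J}_d)$, since each coefficient becomes $1-1=0$. In the affine chart $z=1$ the foliation is cut out by the vector field $(1-xy^d)\partial_x+(x^d-y^{d+1})\partial_y$, whose Jacobian at $(1,1)$ is $\left(\begin{smallmatrix}-1 & -d\\ d & -(d+1)\end{smallmatrix}\right)$. Its characteristic polynomial is $\lambda^2+(d+2)\lambda+(d^2+d+1)$, so the eigenvalues are $\lambda_{\pm}=\tfrac{1}{2}\bigl(-(d+2)\pm d\sqrt{-3}\bigr)$ (using $p>2$). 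The hypothesis $p\nmid d^2+d+1$ says the determinant $\lambda_+\lambda_-$ is nonzero, so $q_0$ is a nondegenerate, hence isolated, singularity with both eigenvalues nonzero.

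Next I would detect when these eigenvalues fail to lie in $\mathbb{F}_p$. Since $p\nmid d$ and $p\neq 2$, the eigenvalues lie in $\mathbb{F}_p$ if and only if $-3$ is a square in $\mathbb{F}_p$. By the standard computation of the quadratic residue $\left(\tfrac{-3}{p}\right)$, for a prime $p>3$ the element $-3$ is a \emph{nonsquare} modulo $p$ exactly when $p\equiv 2\pmod 3$, i.e. precisely when $p\not\equiv 1\pmod 3$. In that case $\lambda_+,\lambda_-$ are distinct and Galois-conjugate over $\mathbb{F}_p$; using $p\nmid d+2$ (so the trace $\lambda_++\lambda_-$ is a nonzero element of $\mathbb{F}_p$) a short Frobenius argument shows the ratio $\lambda_+/\lambda_-\notin\mathbb{F}_p$. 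Therefore the linear part of $\mathcal{J}_d$ at $q_0$ can be normalized to $y\,dx+\alpha x\,dy$ with $\alpha=-\lambda_+/\lambda_-\notin\mathbb{F}_p$, so $q_0$ is $p$-reduced and $\mathcal{J}_d$ is not $p$-closed.

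The hard part will be the boundary prime $p=3$, which the hypothesis $p\not\equiv 1\pmod 3$ still permits. There $-3\equiv 0$, the two eigenvalues coincide and the linear part at $q_0$ is a non-diagonalizable Jordan block, so no singularity is $p$-reduced and the route above breaks down; this is the genuine obstacle to a uniform argument. I would settle $p=3$ by a direct computation. The divergence-free field $v_\omega$ associated to $\omega$ is $z^d\partial_x+x^d\partial_y+y^d\partial_z$, and it suffices to check that $i_{v_\omega^{3}}\omega\not\equiv 0$, i.e. $\Delta_{\mathcal{J}_d}\neq 0$. Note that for $p=3$ the hypothesis $p\nmid d(d+2)(d^2+d+1)$ already forces $d\equiv 2\pmod 3$, and the explicit nonvanishing exhibited for $d=2$ in the Example of Section~\ref{pdivisorP1P1} is the model computation one extends across this congruence class.
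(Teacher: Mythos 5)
For $p>3$ your argument is correct and is essentially the paper's proof carried out in local coordinates: the paper obtains the same quadratic data (trace $-(d+2)$, determinant $d^{2}+d+1$, discriminant a square times $-3$) globally, by using the transitive action of the order-$(d^{2}+d+1)$ automorphism on $\sing(\mathcal{J}_d)$ together with the Baum--Bott relation $(d+2)^{2}=(d^{2}+d+1)(\alpha+\alpha^{-1}+2)$, whereas you read it off the Jacobian $\left(\begin{smallmatrix}-1 & -d\\ d & -(d+1)\end{smallmatrix}\right)$ at the explicit singular point $[1:1:1]$; both routes then invoke \cite[Lemma 3.11]{mendson2022foliations} and the fact that $-3$ is a nonsquare mod $p$ exactly when $p\equiv 2 \pmod 3$. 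Your extra Frobenius step, ruling out $\lambda_{+}/\lambda_{-}=\pm 1$ via $p\nmid d(d+2)$, is needed in your formulation and is correct; the paper absorbs it by working directly with the monic quadratic $x^{2}+\bigl((d^{2}-2d-2)/(d^{2}+d+1)\bigr)x+1$ satisfied by the ratio $\alpha$ itself, whose irreducibility over $\mathbb{F}_p$ is exactly the nonsquareness of its discriminant $-3d^{2}(d+2)^{2}/(d^{2}+d+1)^{2}$.

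The genuine gap is $p=3$, which the hypothesis $p\not\equiv 1\pmod 3$ permits (forcing $d\equiv 2\pmod 3$). You correctly diagnose the obstruction: the discriminant $-3d^{2}$ vanishes, the linear part at every singularity is a nontrivial Jordan block (it is scalar only if $3\mid d$, which is excluded), so no singularity is $p$-reduced and the criterion of \cite[Lemma 3.11]{mendson2022foliations} yields nothing. But your resolution is only a plan, not a proof: asserting that the nonvanishing of $i_{v_\omega^{3}}\omega$ checked for $d=2$ "is the model computation one extends across this congruence class" does not establish $i_{v_\omega^{3}}\omega\neq 0$ for a single other value of $d\equiv 2\pmod 3$, and nothing in the $d=2$ example formally propagates to general $d$. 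Compare Proposition \ref{2Jou}, where the analogous statement at $p=2$ is proved by an actual closed-form computation of $v^{2}$ and $D_{2}(v)$ valid for all $d$; that is the kind of uniform computation your sketch would have to supply at $p=3$. It is worth noting that the paper's own proof is silent on exactly this point --- its concluding step "$\alpha\bmod p\notin\mathbb{F}_p$ if and only if $-3$ is not a square modulo $p$" is only meaningful for $p\neq 3$, since $-3\equiv 0$ is a square modulo $3$ --- so you have correctly isolated a real boundary case that the paper glosses over; but as submitted, your proposal leaves the case $p=3$ unproven.
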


\begin{proof} Note that the Jouanolou foliation over $\corpo$ is just the reduction modulo $p$ of the Jouanolou foliation defined on the complex projective plane. So, by the \cite[Lemma 3.11]{mendson2022foliations} it is sufficient to show that the reduction modulo $p$ of the eigenvalues of $\mathcal{J}_d$ is not an element of $\mathbb{F}_p$. By the properties of the Jouanolou foliation, we know that the automorphism $\Phi_d$ acts transitively on the set $\sing(\mathcal{J}_d)$ and this implies that all the singularities of $\mathcal{J}_d$ has the same eigenvalues $\alpha\in \mathbb{C}$. The Baum-Bott formula (see \cite[Theorem 3.1]{MR537038}) implies $(d+2)^2 = (d^2+d+1)(\alpha+\alpha^{-1}+2)$ and this implies that $\alpha$ and $\alpha^{-1}$ are the roots of the polynomial $p(x) = x^2+((d^2-2d-2)/(d^2+d+1))x+1\in \mathbb{C}[x]$.

The polynomial $p(x)$ has discriminant equal to $\delta = -3d^2(d+2)^2/(d^2+d+1)^2$. In particular, $\alpha \mod p$ is not an element of $\mathbb{F}_p$ if and only if $-3$ is not square modulo $p$. By the law of quadratic reciprocity, we conclude that $\mathcal{J}_d$ is not $p$-closed if and only if $p\not\equiv 1\mod 3$.
\end{proof}

\begin{thm} \label{JouJouJou} Let $p>2$ be a prime number such that $7\nmid p+4$ and such that $p\not\equiv 1 \mod 3$. Then, the Jouanolou foliation of degree two $\mathcal{J}_2$ defined over a field of characteristic $p$ has irreducible $p$-divisor.
\end{thm}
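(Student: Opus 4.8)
The plan is to argue by contradiction: assuming $\Delta_{\mathcal{J}_2}$ is not irreducible, I would extract a \emph{proper} irreducible invariant curve and contradict Lemma~\ref{speciallll2}. First I would fix the ambient data. By Lemma~\ref{Jounaopfechada}, the hypothesis $p\not\equiv 1\bmod 3$ (which in particular forces $p\neq 7$, since $7\equiv 1\bmod 3$) together with $p>2$ gives $p\nmid 56 = 2\cdot 4\cdot 7$, so $\mathcal{J}_2$ is not $p$-closed; moreover its common singular eigenvalue $\alpha$ satisfies $\alpha\notin\mathbb{F}_p$, whence every singularity is $p$-reduced and $\mathcal{J}_2$ is a $p$-reduced foliation with exactly $d^2+d+1=7$ singular points, permuted transitively by the order-$7$ automorphism $\Phi$. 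Finally $\deg\Delta_{\mathcal{J}_2}=p(d-1)+d+2=p+4$, and since $p\nmid d+2=4$ the foliation has an algebraic solution, hence a genuine invariant component of $\Delta_{\mathcal{J}_2}$.

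With $\Delta_{\mathcal{J}_2}$ reducible, that invariant component is proper, so I obtain an irreducible $\mathcal{J}_2$-invariant curve $C\le \Delta_{\mathcal{J}_2}$ with $\deg C<\deg\Delta_{\mathcal{J}_2}=p+4$. Lemma~\ref{speciallll2} then says $(\mathcal{J}_2,C)$ is \textbf{not} special, i.e. $\beta_{\mathcal{J}_2,C}\neq 0$ (Definition~\ref{parspecial}), while Remark~\ref{retasJOU} rules out invariant lines, so $\deg C\ge 2$. The whole strategy is to contradict this non-specialness by producing a special pair from the order-$7$ symmetry.

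Here the automorphism enters. Since $\Phi^{*}\Delta_{\mathcal{J}_2}=\Delta_{\mathcal{J}_2}$, $\Phi$ permutes the irreducible components, and being of prime order its orbits have size $1$ or $7$. By \cite[Proposition 3.8]{mendson2022foliations} every non-invariant component carries multiplicity divisible by $p$, so $\Delta_{\mathcal{J}_2}=\Delta_{\mathrm{inv}}+pR$ with $\Delta_{\mathrm{inv}}$ supported on invariant curves and $R$ on non-invariant ones. Because every size-$7$ orbit contributes a multiple of $7$ to the degree while $7\nmid p+4=\deg\Delta_{\mathcal{J}_2}$, at least one component is $\Phi$-fixed. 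I would then split the analysis in two. For an invariant curve lying in a size-$7$ orbit, its seven distinct conjugates force $7\deg C\le p+4$, hence $\deg C<p$, placing $C$ in the range of Proposition~\ref{singularbao}; combining this with Corollary~\ref{grauspecial} and the node/Bezout count used in the proof of Lemma~\ref{speciallll2} should eliminate these. For a $\Phi$-fixed \emph{invariant} curve $C_0$ I would argue that it forms a special pair: $\beta$ is additive over products and, in characteristic $p$, any component of multiplicity divisible by $p$ contributes $0$ to $\beta$; using the specialness of the reduced part of the $p$-divisor and the fact that $\Phi^{*}$ permutes the forms $\beta_{\mathcal{J}_2,C_i}$ compatibly with its permutation of the $C_i$, the $\Phi$-isotypic piece carried by $C_0$ must vanish on its own, forcing $\beta_{\mathcal{J}_2,C_0}=0$, i.e. $(\mathcal{J}_2,C_0)$ special — contradicting Lemma~\ref{speciallll2}.

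The hard part will be this last step: cleanly converting ``$\Phi$-fixed invariant component'' into ``special pair''. It needs two ingredients — a global, mod-$p$ refinement of the statement that the reduced $p$-divisor gives a special pair, yielding $\sum_i(a_i\bmod p)\,\beta_{\mathcal{J}_2,C_i}=0$; and the representation-theoretic separation, under the $\mathbb{Z}/7$-action, of the $\beta$'s attached to $\Phi$-fixed curves from those attached to size-$7$ orbits, so that no cancellation between orbits can rescue non-specialness. The two arithmetic hypotheses are precisely what make this work: $p\not\equiv 1\bmod 3$ secures $p$-reducedness (hence the applicability of Lemma~\ref{speciallll2} and the count of seven singularities), while $7\nmid p+4$ forces the $\Phi$-fixed component that seeds the contradiction.
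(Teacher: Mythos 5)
Your skeleton matches the paper's proof: non-$p$-closedness via Lemma~\ref{Jounaopfechada} (and your remark that $p\not\equiv 1\bmod 3$ already excludes $p=7$, needed for the primitive $7$th root, is correct), extraction of a proper irreducible invariant component $C$, non-specialness of $(\mathcal{J}_2,C)$ from Lemma~\ref{speciallll2}, and the use of $7\nmid p+4$ to force a $\Phi$-fixed component. But the step you yourself flag as ``the hard part'' --- converting a $\Phi$-fixed invariant curve into a special pair via a global identity $\sum_i(a_i\bmod p)\,\beta_{\mathcal{J}_2,C_i}=0$ plus a $\mathbb{Z}/7$-isotypic separation --- is left unproven, and it is an unnecessary detour that obscures the one computation the proof actually needs. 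If $C_0=\{F_0=0\}$ is invariant and fixed by $\Phi^l$, apply $(\Phi^l)^*$ to the defining identity $dF_0\wedge\omega=F_0\bigl(\tfrac{\deg C_0}{4}\,d\omega+i_R\beta\bigr)$: since $\Phi^*\omega=\gamma\omega$, $\Phi^*F_0=\lambda F_0$ for a scalar $\lambda$, and the diagonal $\Phi$ commutes with the radial vector field, the \emph{uniqueness} of $\beta$ in the lemma preceding Definition~\ref{parspecial} forces $(\Phi^l)^*\beta=\gamma^l\beta$. Writing $\beta=(ax+by+cz)\,dx\wedge dy\wedge dz$ and using $[x:y:z]\mapsto[\gamma^{5l}x:\gamma^l y:z]$, one gets the eigenvalue equations $\gamma^{10l}a=a$, $\gamma^{6l}b=b$, $\gamma^{5l}c=c$, which have only the zero solution for $l\not\equiv 0\bmod 7$; hence $\beta=0$, i.e.\ $(\mathcal{J}_2,C_0)$ is special, contradicting Lemma~\ref{speciallll2} outright. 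No mod-$p$ refinement of the specialness of the $p$-divisor and no cancellation analysis between orbits is needed: the relevant $\gamma^l$-eigenspace of linear $3$-forms is already zero, which is precisely why the ``separation'' you hope to engineer would be vacuous.

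Two further steps of your sketch fail or are missing. First, for invariant curves in size-$7$ orbits you propose to invoke Corollary~\ref{grauspecial} together with Proposition~\ref{singularbao} and the node count from Lemma~\ref{speciallll2}; but Corollary~\ref{grauspecial} applies only to \emph{special} pairs, and these pairs are non-special by Lemma~\ref{speciallll2}, so this elimination has no basis --- and none is needed, since size-$7$ orbits are handled by pure degree counting. Second, your deduction ``$7\nmid p+4$ gives a $\Phi$-fixed component'' does not yet give a fixed \emph{invariant} component: the fixed component could a priori be one of the $P_i$ inside $pR$, where no form $\beta$ exists and your specialness contradiction is unavailable. The paper closes this hole by ruling out $p$-factors altogether in the relevant case: if the orbit of $C$ has size $7$, then $\Delta_{\mathcal{J}_2}\geq\sum_{j=0}^{6}(\Phi^*)^{j}C$ contributes degree at least $7\deg C\geq 7$, so any additional component of multiplicity divisible by $p$ would force $\deg\Delta_{\mathcal{J}_2}\geq p+7>p+4$, a contradiction; hence every component is invariant, and either some component is fixed by a power of $\Phi$ (then the eigenvalue computation above applies) or all orbits have size $7$, forcing $7\mid p+4$ against the hypothesis. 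Until you replace your isotypic mechanism by the direct equivariance-of-$\beta$ computation and add the no-$p$-factor count, the outline is not a proof.
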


\begin{proof} The Jouanolou foliation on the $\mathbb{P}_{\corpo}^2$ is given by the projective $1$-form:
$$
\omega = (x^2z-y^3)dx+(xy^2-z^3)dy+(z^2y-x^3)dz.
$$
Let $\gamma \in \corpo$ be a primitive $7$th root of unity. Note that since  $p\neq 7$ the morphism
$$\Phi\colon \mathbb{P}_{\corpo}^2\longrightarrow \mathbb{P}_{\corpo}^2\qquad [x:y:z]\mapsto [\gamma^5 x:\gamma y:z]$$
is a automorphism of the foliation $\mathcal{J}_2$ with $\Phi^{*}\omega = \gamma \omega$.

The condition $p\not\equiv 1 \mod 3$ implies that the Jouanolou foliation is not $p$-closed by Lemma \ref{Jounaopfechada}. Suppose by contradiction that $\mathcal{J}_2$ has non-irreducible $p$-divisor. In particular, there is a irreducible $\mathcal{J}_2$-invariant curve $C = \{F = 0\}$ with
$\deg(C)<\deg(\Delta_{\mathcal{J}_2})$. Lemma \ref{speciallll2} ensures that the pair $(\mathcal{J}_2,C)$ is not special. In particular, there is a $3$-homogeneous form $\beta$ of degree one on $\mathbb{A}_{\corpo}^3$ such that
$$
dF\wedge \omega = F\Theta \quad \mbox{where} \quad \Theta = \frac{\deg(C)}{4} d\omega + i_{R}\beta.
$$
Now we will consider two cases:
\begin{itemize}
 \item \textbf{some power $\Phi^{l}$ fixes the curve $C$}:  In that case, we conclude that $(\Phi^l)^*\beta = \gamma^{l}\beta$. If $\beta = H(x,y,z)dx\wedge dy \wedge dz$ with $H(x,y,z) = ax+by+cz\in \corpo[x,y,z]_1$ we get the equations:
 $$
 \gamma^{10l} a =a \quad,\quad \gamma^{6l} b = b \quad,\quad \gamma^{5l} c = c
 $$
 so that $a = b = c =0$, a contradiction since $C$ is not special.

 \item \textbf{$\Phi^{l}$ does not fix the curve $C$ for any $l\in \{0,\ldots, 6\}$:} In particular, $(\Phi^{*})^{l}C\neq (\Phi^{*})^{k}C$ for every $k\neq l\in \{0,\ldots,6\}$: Since $\deg(\Delta_{\mathcal{J}_2}) = p+4$ and $\Phi$ has order $7$ we observe that $\Delta_{\mathcal{J}_2}$ does not have $p$-factor. So, there are two possibilities: There is a prime divisor $P$ in the support of $\Delta_{\mathcal{J}_2}$ that is fixed by some power of $\Phi$ or there is not that prime. In the first case, we use the previous argument in order to get a contradiction. In the last case, we conclude that $7\mid p + 4$, a contradiction.
\end{itemize}
This ends the proof of the theorem.
\end{proof}

\begin{cor} In the conditions of the Theorem \ref{JouJouJou}, a generic foliation of degree two on the projective plane over characteristic $p>0$ has irreducible $p$-divisor.
\end{cor}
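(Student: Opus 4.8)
The plan is to show that ``having irreducible $p$-divisor'' cuts out a nonempty open subset of the parameter space of degree-two foliations, and then to invoke irreducibility of that space: Theorem \ref{JouJouJou} supplies one point of this set, namely $\mathcal{J}_2$, and a nonempty open subset of an irreducible variety is dense. To set this up I would first describe the parameter space. Degree-two foliations on $\Projdois$ are parametrized, modulo $\corpo^{*}$, by the locus $B$ of sections with finite singular locus inside the projective space $\mathbb{P}(\h(\Projdois,\Omega^1_{\Projdois}(4)))$; since finiteness of $\sing(\mathcal{F})$ is an open condition, $B$ is an open subvariety of a projective space and hence irreducible. Inside $B$ the non-$p$-closed foliations form an open subset $B^{\circ}$, because non-$p$-closedness means the global section $s_{\mathcal{F}}$ of $(T_{\mathcal{F}}^{*})^{p}\otimes N_{\mathcal{F}}$ is nonzero, which is an open condition; moreover $B^{\circ}\neq\varnothing$ since $\mathcal{J}_2\in B^{\circ}$ by Lemma \ref{Jounaopfechada}.

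Next I would check that the $p$-divisor varies algebraically over $B^{\circ}$. For $\mathcal{F}\in B^{\circ}$ of degree two the divisor $\Delta_{\mathcal{F}}$ has constant degree $p(d-1)+d+2 = p+4$, and its defining equation $i_{v_{\omega}^{p}}\omega$ is polynomial in the coefficients of $\omega$: the vector field $v_{\omega}$ is linear in those coefficients, its $p$-th iterate $v_{\omega}^{p}$ is again a homogeneous vector field whose coefficients are polynomials in the coefficients of $v_{\omega}$ and in $x,y,z$, and contraction is bilinear. Hence $\mathcal{F}\mapsto \Delta_{\mathcal{F}}$ defines a morphism $\Psi\colon B^{\circ}\longrightarrow \mathbb{P}(\corpo[x,y,z]_{p+4})$ into the space of plane curves of degree $p+4$.

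Finally I would use that irreducibility is an open condition in the target. The reducible locus in $\mathbb{P}(\corpo[x,y,z]_{p+4})$ is closed, being the finite union over $1\leq a\leq \lfloor (p+4)/2\rfloor$ of the images of the multiplication maps $\mathbb{P}(\corpo[x,y,z]_{a})\times \mathbb{P}(\corpo[x,y,z]_{p+4-a})\longrightarrow \mathbb{P}(\corpo[x,y,z]_{p+4})$, $([G],[H])\mapsto [GH]$, each of which has closed image by properness; so the locus $\mathcal{I}$ of irreducible curves is open. Therefore $U:=\Psi^{-1}(\mathcal{I})$ is open in $B^{\circ}$, hence open in $B$. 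By Theorem \ref{JouJouJou} we have $\mathcal{J}_2\in U$, so $U$ is a nonempty open, and therefore dense, subset of the irreducible variety $B$; a generic foliation of degree two thus lies in $U$ and has irreducible $p$-divisor.

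The step I expect to be the main obstacle is justifying that $\Psi$ is genuinely a morphism, i.e.\ that the coefficients of $v_{\omega}^{p}$, and hence of $i_{v_{\omega}^{p}}\omega$, depend polynomially on the coefficients of $\mathcal{F}$. This is formal but is the crux of the argument: it is precisely what packages the pointwise construction of the $p$-divisor into an algebraic family and legitimizes the transfer of the openness of irreducibility from the space of plane curves back to the space of foliations.
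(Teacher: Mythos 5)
Your proof is correct and is exactly the argument the paper leaves implicit: the corollary is stated there without proof, the intended reasoning being precisely that irreducibility of the $p$-divisor is an open condition on the irreducible parameter space of degree-two foliations, with Theorem \ref{JouJouJou} supplying the nonempty witness $\mathcal{J}_2$. Your verification that $\omega\mapsto i_{v_{\omega}^{p}}\omega$ depends polynomially on the coefficients of $\omega$ (so that $\Psi$ is a morphism, well defined since $p>2$ gives $p\nmid d+2$) and that the reducible locus is closed simply fills in the details the paper takes for granted.
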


\begin{cor} The Jouanolou foliation of degree two $\mathcal{J}_2$ on $\mathbb{P}_{\mathbb{C}}^2$ has no algebraic solutions.
\end{cor}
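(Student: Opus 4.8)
The plan is to combine Theorem \ref{JouJouJou} with Proposition \ref{local_global}. Since all coefficients of the defining $1$-form of $\mathcal{J}_2$ are integers, we have $\mathbb{Z}[\mathcal{J}_2] = \mathbb{Z}$, so that $\spm(\mathbb{Z}[\mathcal{J}_2])$ consists of the ideals $\mathfrak{p} = (p)$ for $p$ prime, and the reduction modulo $\mathfrak{p}$ of the complex Jouanolou foliation is precisely the Jouanolou foliation of degree two over a field of characteristic $p$. Consequently, it suffices to exhibit infinitely many primes $p$ satisfying the arithmetic hypotheses of Theorem \ref{JouJouJou}, namely $p>2$, $7\nmid p+4$, and $p\not\equiv 1 \mod 3$.

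First I would reformulate the two congruence conditions for primes $p>7$. The condition $p\not\equiv 1 \mod 3$ then forces $p\equiv 2 \mod 3$ (as $p$ is prime), while $7\nmid p+4$ is equivalent to $p\not\equiv 3 \mod 7$. By the Chinese Remainder Theorem, the simultaneous conditions $p\equiv 2 \mod 3$ and $p\equiv 1 \mod 7$ single out one residue class modulo $21$ that is coprime to $21$. By Dirichlet's theorem on primes in arithmetic progressions, this class contains infinitely many primes, and each such prime satisfies all three requirements of Theorem \ref{JouJouJou} at once.

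For every such prime $p$, Theorem \ref{JouJouJou} asserts that the $p$-divisor of the Jouanolou foliation of degree two over characteristic $p$ is irreducible. As this collection of primes is infinite, it is dense in $\spm(\mathbb{Z})$, so the $p$-divisor of $(\mathcal{J}_2)_{\mathfrak{p}}$ is irreducible for infinitely many primes $\mathfrak{p}$ in the sense of the earlier definition. Since $\mathcal{J}_2$ has degree $2>1$, I would then apply Proposition \ref{local_global} directly to conclude that $\mathcal{J}_2$ has no algebraic solutions on $\mathbb{P}_{\mathbb{C}}^2$.

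There is essentially no obstacle in this argument: the entire substantive content has already been absorbed into the proof of Theorem \ref{JouJouJou} (via the automorphism $\Phi$ of order $7$, the non-$p$-closedness from Lemma \ref{Jounaopfechada}, and the special-pair analysis of Lemma \ref{speciallll2}). The only extra ingredient is the purely elementary fact that two independent congruence conditions exclude only finitely many primes from an arithmetic progression, which follows at once from Dirichlet's theorem.
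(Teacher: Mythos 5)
Your proposal is correct and follows exactly the paper's own two-line argument: Theorem \ref{JouJouJou} supplies irreducibility of the $p$-divisor for infinitely many primes, and Proposition \ref{local_global} concludes. The only difference is that you make explicit (via the Chinese Remainder Theorem and Dirichlet's theorem, e.g.\ the class $p\equiv 8 \bmod 21$) the infinitude of primes satisfying $p>2$, $7\nmid p+4$, $p\not\equiv 1 \bmod 3$, together with the observation that $\mathbb{Z}[\mathcal{J}_2]=\mathbb{Z}$ --- details the paper leaves implicit, and which you verify correctly.
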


\begin{proof} By Theorem \ref{JouJouJou} we know that the reduction modulo $p$ of $\mathcal{J}_2$ has irreducible $p$-divisor for infinitely many primes $p$. Now, the result follows from Proposition \ref{local_global}.
\end{proof}

\begin{OBS} If $p=7$ the argument used in the corollary above does not works because in that case there is no primitive $7$th root. Using the computer algebra system \cite[Singular]{DGPS} we check that the $7$-divisor of the Jouanolou foliation $\mathcal{J}_2$ is given by
$$
\Delta_{\mathcal{J}_2} = \{-3x^{3}z+xy^{3}+2yz^{3} = 0\}+7\{2x+y-3y = 0\}.
$$
\end{OBS}

\begin{prop}\label{chavechave} Let $\mathcal{F}$ be a $p$-reduced foliation on $\Projdois$ and let $C$ be an irreducible $\mathcal{F}$-invariant curve. Suppose that $\mathcal{F}$ is not $p$-closed and that $\deg(C)<\deg(\Delta_{\mathcal{F}})$. Let $\Phi\in \Aut(\mathcal{F})-\{1\}$ be an automorphism of $\mathcal{F}$. If $(\mathcal{F},C)$ is special then there is a $\mathcal{F}$-invariant irreducible curve $D$ such that $(\mathcal{F},D)$ is not special \textbf{or} $\Phi$  fixes the curve $C$.
\end{prop}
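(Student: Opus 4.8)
The plan is to prove the dichotomy by contradiction: assume $(\mathcal{F},C)$ is special, that $\Phi$ does \emph{not} fix $C$, and that \emph{every} irreducible $\mathcal{F}$-invariant curve is special, and then derive a contradiction. The two structural facts I would lean on are, first, that $\Phi$ permutes the irreducible $\mathcal{F}$-invariant curves while preserving $\Delta_{\mathcal{F}}$ (the remark after the definition of automorphism gives $\Phi^{*}\Delta_{\mathcal{F}}=\Delta_{\mathcal{F}}$, so $\ord_{\Delta_{\mathcal{F}}}$ is constant along $\Phi$-orbits), and second, the invariance criterion of \cite[Proposition 3.8]{mendson2022foliations}: in the prime decomposition $\Delta_{\mathcal{F}}=\sum_i n_iP_i$ every invariant prime occurs, and every non-invariant prime occurs with multiplicity $n_i\equiv 0\bmod p$.

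First I would record the degree data of the invariant primes. Under the standing assumption all invariant $P_i$ are special, so Corollary \ref{grauspecial} gives $\deg P_i\equiv d+2\bmod p$ for each of them (this is where one uses $p>2$ and that $\mathcal{F}$ is $p$-reduced). Reading $\deg\Delta_{\mathcal{F}}=\sum_i n_i\deg P_i$ modulo $p$, the non-invariant part contributes $0$ since its multiplicities are divisible by $p$; since $\deg\Delta_{\mathcal{F}}=p(d-1)+d+2\equiv d+2\bmod p$ and $p\nmid d+2$ (the hypothesis making $\Delta_{\mathcal{F}}$ well defined), this yields the congruence $\sum_{P_i\ \mathrm{invariant}} n_i\equiv 1\bmod p$.

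Next I would extract a matching upper bound. By Proposition \ref{speciallll} a special irreducible invariant curve passes through each of the $d^2+d+1$ singular points of $\mathcal{F}$ with a node, so the classical genus estimate forces its degree $e$ to satisfy $\tfrac{(e-1)(e-2)}{2}\ge d^2+d+1$, whence $e\ge d+3>d+2$. Consequently $(d+2)\sum_i n_i\le\sum_i n_i\deg P_i\le\deg\Delta_{\mathcal{F}}=p(d-1)+d+2$, which gives $\sum_i n_i\le p$ (the sums being over invariant primes). Combining $\sum_i n_i\equiv 1\bmod p$ with $1\le\sum_i n_i\le p$ forces $\sum_{P_i\ \mathrm{invariant}} n_i=1$: there is exactly one irreducible $\mathcal{F}$-invariant curve and it occurs with multiplicity one. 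That unique curve must be $C$, and being the only invariant curve it is necessarily fixed by $\Phi$ — contradicting the assumption that $\Phi$ does not fix $C$. Hence, whenever $\Phi$ does not fix $C$, some invariant curve $D$ is non-special; as $C$ itself is special, this $D$ is distinct from $C$, which is the assertion.

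The two delicate inputs to the count are exactly the places I expect resistance. One is confirming that the degree congruence of Corollary \ref{grauspecial} applies to \emph{every} special invariant prime, which relies on $\mathcal{F}$ being $p$-reduced (so each such prime meets $\sing(\mathcal{F})$ as required) and on $p>2$; if $p=2$ the congruence is vacuous and the argument would need to be handled separately. The other is the uniform lower bound $\deg P_i\ge d+2$, for which I would invoke the node/genus estimate of Proposition \ref{speciallll} rather than the congruence alone, since a priori a degree $\equiv d+2\bmod p$ could be small. Once both are secured, the clash between $\sum_i n_i\equiv 1\bmod p$ and $\sum_i n_i\le p$ closes the argument with no further computation.
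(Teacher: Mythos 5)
Your proof is correct, and it takes a genuinely different route from the paper's. The paper argues constructively: assuming $\Phi$ does not fix $C$, it writes $\Delta_{\mathcal{F}}-C=\sum_i\alpha_iP_i$ and splits into two cases. If every $\alpha_i\equiv 0\bmod p$, then writing $\alpha_i=pl_i$ and using $\deg C=d+2+pl$ (from Corollary \ref{grauspecial}) it gets $\sum_i l_i\deg P_i=d-1-l<d$, hence $\deg P_i<d$ for all $i$; but $\Phi^{*}\Delta_{\mathcal{F}}=\Delta_{\mathcal{F}}$ with $\Phi^{*}C\neq C$ forces $\Phi^{*}C$ to coincide with some $P_i$, impossible since $\deg\Phi^{*}C=\deg C\geq d+2$. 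If some $\alpha_i\not\equiv 0\bmod p$, then $P_i$ is invariant by \cite[Proposition 3.8]{mendson2022foliations}, and $(\mathcal{F},P_i)$ cannot be special because Proposition \ref{speciallll} would then give $2=m_q(\Delta_{\mathcal{F}})\geq m_q(C)+m_q(P_i)=4$ at each singular point; this names $D=P_i$ explicitly. You instead negate the whole disjunction and run a global count over the invariant primes of $\Delta_{\mathcal{F}}$: the congruence $(d+2)\sum n_i\equiv d+2\bmod p$ (Corollary \ref{grauspecial} for the invariant part, \cite[Proposition 3.8]{mendson2022foliations} for the non-invariant part), the lower bound $\deg P_i\geq d+3$ from Proposition \ref{speciallll} combined with the $d^2+d+1$ distinct singularities and the nodal-degree estimate of \cite[Theorem 2, page 60]{MR0313252} (the same input the paper uses in Lemma \ref{speciallll2}), and the resulting inequality $\sum n_i\leq p$ together force $\sum n_i=1$, so $C$ is the unique invariant curve and is therefore $\Phi$-fixed, a contradiction. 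Both arguments rest on the same implicit background hypotheses ($p>2$ and $p\nmid d+2$), which you rightly flag. What your route buys: a stronger intermediate statement (under the all-special hypothesis there is a unique invariant curve, of multiplicity one in $\Delta_{\mathcal{F}}$, and the hypothesis $\deg C<\deg\Delta_{\mathcal{F}}$ is never used), and, more substantively, your genus-bound step supplies a justification the paper leaves tacit: Corollary \ref{grauspecial} yields only a congruence, so the normalization $\deg C=d+2+pl$ with $l\geq 0$, on which the paper's first case depends, is not automatic when $p<d+2$; your bound $\deg P_i\geq d+3$ is exactly what secures it. What the paper's route buys: it is shorter, exhibits the non-special curve $D$ as a concrete component of $\Delta_{\mathcal{F}}-C$, and in its second case needs neither the singularity count nor the genus estimate.
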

\begin{proof} Suppose that $\Phi$ does not fix the curve $C$. Write $\Delta_{\mathcal{F}} -C = \sum_{i}\alpha_iP_i$ where $P_i$'s are prime divisors. By Corollary \ref{grauspecial} we know that $\deg(C) = d+2+pl$ for some $l\in \{0,\ldots,d-2\}$ so that $\sum_{i}\alpha\deg(P_i) = p(d-1-l)$. We have two possibilities:

\begin{itemize}
 \item \textbf{$\alpha_{i}\equiv 0 \mod p$ for all $i$:} In that case, if $\alpha_i = pl_i$ for some $l_i\in \mathbb{Z}_{\geq 0}$ then we conclude $\sum_{i}l_i\deg(P_i) = d-1-l < d$. In particular, $\deg(P_i)<d$. But this is a contradiction since $\Delta_{\mathcal{F}} = \Phi^{*}\Delta_{\mathcal{F}}  = \Phi^{*}C+\sum_{i}\alpha_{i}\Phi^{*}P_{i}$ with $C\neq \Phi^{*}C$ and $C<\Delta_{\mathcal{F}}$.
 
 \item \textbf{$\alpha_i \not\equiv 0 \mod p$ for some $i$:} In that case, $P_i$ is a $\mathcal{F}$-invariant divisor by \cite[Proposition 3.8]{mendson2022foliations}. Now, note that $(\mathcal{F},P_i)$ is not special. Indeed, assume by contradiction that $(\mathcal{F},P_i)$ is special. Then, by Proposition \ref{speciallll} we know that $\sing(\mathcal{F}) \subset \sing(P_i)$. In particular, for every $q \in\sing(\mathcal{F}) $ we have $2 = m_q(\Delta_{\mathcal{F}}) \geq m_q(C)+m_q(P_i)\ = 4$, a contradiction.
\end{itemize}
This ends the proof of the proposition.
\end{proof}

\begin{thm} \label{irredutibilidade} Let $\corpo$ be an algebraically closed field of characteristic $p>0$. Let $d>2$ be a positive integer such that
\begin{itemize}
    \item $p<d$ and $p\not\equiv 1\mod 3$;

    \item $d^2+d+1$ is prime.
\end{itemize}
Then the Jouanolou foliation $\mathcal{J}_d$ on $\mathbb{P}_{\corpo}^2$ has irreducible $p$-divisor \textbf{or} $\Delta_{\mathcal{J}_d} = C+pR$ with $(\mathcal{J}_d,C)$ special, $\deg(C) = pl+d+2$ with $l>0$ and $R = \sum_{i}\alpha_i P_i$ with $P_i$ not $\mathcal{J}_d$-invariant for every $i$.
\end{thm}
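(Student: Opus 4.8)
The plan is to suppose that $\Delta:=\Delta_{\mathcal{J}_d}$ is \emph{not} irreducible and to recover the stated decomposition $\Delta=C+pR$. The standing facts I would record first: by Lemma \ref{Jounaopfechada} (whose hypotheses hold, since $d^2+d+1$ prime and $p<d$ force $p\nmid d^2+d+1$, while $p\nmid d+2$ is needed for $\Delta$ to be defined) the foliation $\mathcal{J}_d$ is not $p$-closed and all of its $N:=d^2+d+1$ singular points are $p$-reduced; hence $m_q(\Delta)=2$ at each $q\in\sing(\mathcal{J}_d)$ and, by Remark \ref{retasJOU}, $\mathcal{J}_d$ has no invariant line. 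I would also fix the automorphism $\Phi\colon[x:y:z]\mapsto[\gamma^{d^2+1}x:\gamma y:z]$, $\Phi^*\omega=\gamma\omega$, of order $N$, noting that $N$ is \emph{prime} and $N\geq d^2$.

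\emph{Step 1: every $\mathcal{J}_d$-invariant prime is $\Phi$-fixed.} Let $C$ be an irreducible $\mathcal{J}_d$-invariant curve in the support of $\Delta$. As $N$ is prime, the $\langle\Phi\rangle$-orbit of $C$ has size $1$ or $N$. If it had size $N$ the curves $\Phi^{l}C$ ($0\le l\le N-1$) would be pairwise distinct, and Proposition \ref{autbao} (valid because $\Phi$ has order $N\geq d^2$ and $p<d$) would give $\deg C=1$, contradicting the absence of invariant lines. So the orbit is a singleton and $\Phi$ fixes $C$.

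\emph{Step 2: every $\mathcal{J}_d$-invariant prime is special} — this is the heart of the argument. Writing $\Phi^*F=\lambda F$ and applying $\Phi^*$ to the relation $dF\wedge\omega=F\bigl(\tfrac{\deg C}{d+2}\,d\omega+i_R\beta_{\mathcal{J}_d,C}\bigr)$, using $\Phi^*\omega=\gamma\omega$ and $\Phi^*d\omega=\gamma\,d\omega$, I would cancel to obtain $\Phi^*(i_R\beta_{\mathcal{J}_d,C})=\gamma\,i_R\beta_{\mathcal{J}_d,C}$, hence $\Phi^*\beta_{\mathcal{J}_d,C}=\gamma\,\beta_{\mathcal{J}_d,C}$ because $\Phi$ is linear (so preserves the radial field $R$) and $i_R$ is injective on top forms. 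Writing $\beta_{\mathcal{J}_d,C}=H\,dx\wedge dy\wedge dz$ with $H$ homogeneous of degree $d-1$ and using $\Phi^*(dx\wedge dy\wedge dz)=\gamma^{d^2+2}dx\wedge dy\wedge dz$, this reads $\Phi^*H=\gamma^{-(d^2+1)}H=\gamma^{d}H$. Thus each monomial $x^ay^bz^c$ of $H$ satisfies $(d^2+1)a+b\equiv d\pmod N$, i.e.\ $b\equiv d(a+1)\pmod N$ (using $d^2+1\equiv -d$). But $a+b+c=d-1$ forces $0\le b\le d-1<d\le d(a+1)\le d^2<N$, so $b$ and $d(a+1)$ are distinct residues modulo $N$ and the congruence has no solution. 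Therefore $H=0$, $\beta_{\mathcal{J}_d,C}=0$, and $(\mathcal{J}_d,C)$ is special.

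\emph{Step 3: assembling the decomposition.} By Proposition \ref{speciallll} a special invariant curve has $m_q=2$ at every $q\in\sing(\mathcal{J}_d)$; two such curves would give $m_q(\Delta)\ge4>2$, so there is at most one invariant prime $C$ and it is special. There is at least one, for otherwise all multiplicities in $\Delta$ would be divisible by $p$ by \cite[Proposition 3.8]{mendson2022foliations}, forcing $p\mid\deg\Delta=p(d-1)+d+2$ and hence $p\mid d+2$, excluded. Each non-invariant prime has multiplicity divisible by $p\ge3$ and so cannot meet $\sing(\mathcal{J}_d)$ (else $m_q(\Delta)\ge p>2$); comparing multiplicities at $q$ then gives $2=m_q(\Delta)=m_C\cdot m_q(C)=2m_C$, so $m_C=1$ and $\Delta=C+pR$ with $R=\sum_i\alpha_iP_i$, the $P_i$ non-$\mathcal{J}_d$-invariant. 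Finally Corollary \ref{grauspecial} gives $\deg C=d+2+pl$; and $l=0$ is impossible because a degree-$(d+2)$ curve carrying the $N$ nodes forced by Proposition \ref{speciallll} would violate $N\le\binom{\deg C-1}{2}$, as $\binom{d+1}{2}=\tfrac{d(d+1)}{2}<d^2+d+1=N$ (cf.\ the bound used in Lemma \ref{speciallll2}). Hence $l>0$. The main obstacle is the weight computation of Step 2: identifying the $\Phi^*$-eigenvalue on $\beta_{\mathcal{J}_d,C}$ and proving the congruence $b\equiv d(a+1)\pmod N$ is unsolvable in the range $a+b+c=d-1$; the edge cases $p=2$ and $p\mid d+2$ (where the invoked lemmas and the very definition of $\Delta$ need $p>2$ and $p\nmid d+2$) also deserve a separate word of care.
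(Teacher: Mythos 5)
Your proof is correct and follows the same skeleton as the paper's: the order-$(d^2+d+1)$ automorphism $\Phi$, Proposition \ref{autbao} combined with Remark \ref{retasJOU} to exclude invariant curves with a free $\Phi$-orbit, the eigenvalue computation on $\beta_{\mathcal{J}_d,C}$ to force speciality, and the multiplicity-two count at $p$-reduced singularities from Proposition \ref{speciallll} to isolate a single invariant prime and force $p\mid \alpha_i$ for the remaining components. Two differences are worth recording, both to your advantage. First, you establish at the outset that every invariant prime is $\Phi$-fixed, using primality of $N=d^2+d+1$ only for the orbit-size dichotomy; the paper instead splits on special versus non-special and invokes Proposition \ref{chavechave} to recover $\Phi$-fixedness in the special case. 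As a by-product your weight computation needs only $l=1$, so the congruence $b\equiv d(a+1)\pmod N$ is excluded by the plain inequality $b\le d-1<d\le d(a+1)\le d^2<N$ without appealing to primality at that step, whereas the paper needs primality there to pass from $(\gamma^{dj-i+j+d})^{l}=1$ to $N\mid dj-i+j+d$. Second --- and this is genuine added content --- you actually prove $l>0$: the statement asserts $\deg(C)=pl+d+2$ with $l>0$, but the paper's proof never addresses this; your argument (a special curve must carry the $d^2+d+1$ nodes forced by Proposition \ref{speciallll}, while an irreducible curve of degree $d+2$ satisfies $\sum_q m_q(m_q-1)/2\le\binom{d+1}{2}<d^2+d+1$) is precisely the paper's own technique from Lemma \ref{speciallll2} in the case $d=2$, correctly generalized. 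You likewise make explicit two points the paper glosses: the existence of at least one invariant prime in $\Delta_{\mathcal{J}_d}$ (via $\deg\Delta_{\mathcal{J}_d}\equiv d+2\not\equiv 0 \bmod p$) and the fact that $C$ occurs with multiplicity exactly one. Finally, you are right to flag the edge cases: Lemma \ref{Jounaopfechada} and Corollary \ref{grauspecial} require $p>2$ and $p\nmid d(d+2)$, which the hypotheses of the theorem do not guarantee (e.g.\ $d=8$, $p=5$ satisfies them yet $5\mid d+2$); that gap belongs to the paper, not to you, and your proof inherits exactly the same implicit assumptions.
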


\begin{proof} The condition  $p\not\equiv 1\mod 3$ implies that $\mathcal{J}_d$ is not $p$-closed by Lemma \ref{Jounaopfechada}. Suppose that there is $C$ a $\mathcal{F}$-invariant curve such that $\deg(C)<\deg(\Delta_{\mathcal{J}_d})$.  Consider the pair $(\mathcal{J}_d,C)$ and the automorphism:
$\Phi_{d}\colon \mathbb{P}_{\corpo}^2\longrightarrow \mathbb{P}_{\corpo}^2\in\Aut(\mathcal{J}_d)$ which associes $[x:y:z]\mapsto [\gamma^{d^2+1} x:\gamma y:z].$
We have two cases:
    \begin{itemize}

          \item \textbf{$(\mathcal{J}_d,C)$ is not special:} in this case we consider two subcases: suppose first that $\Phi_d^{l}$ does not fix $C$ for every $l\in\{0,\ldots,d^2+d\}$. In this case, by Proposition \ref{autbao} we conclude that $\deg(C)=1$, a contradiction since $\mathcal{J}_d$ does not have invariant lines (see Remark \ref{retasJOU}). Consider the case where $\Phi_d^l$ fixes $C$ for some $l$ and consider to the identity:
              $$
                      dF\wedge \omega = F\Theta_C \quad \mbox{where} \quad \Theta_C = \frac{\deg(C)}{d+2} d\omega + i_{R}\beta.
              $$
          Applying the automorphism $\Phi_d^l$ on the equation above we conclude that
          $$
                (\Phi_d^l)^{*}\beta = \gamma^{l}\beta.
          $$
          Write $\beta = b(x,y,z)dx\wedge dy\wedge dz$ and $b(x,y,z) = \sum_{0\leq j<i\leq d-1 }b_{ij}x^jy^{i-j}z^{d-1-i}$. Then, since $\Phi_d^{l}$ associes $[x:y:z]\mapsto [\gamma^{l(d^2+1)}x:\gamma^l y:z]$ we conclude that $b(x,y,z) = \gamma^{l(d^2+1)}b(\gamma^{l(d^2+1)}x,\gamma^l y,z) = \gamma^{-ld}b(\gamma^{l(d^2+1)}x,\gamma^l y,z)$, so that if $b_{ij}\neq  0$ then
          $$(\gamma^{j(d^2+1)+i-j-d})^l = (\gamma^{-dj+i-j-d})^l = 1\Longrightarrow (\gamma^{dj-i+j+d})^l =1.$$
          Since $d^2+d+1$ is prime we conclude that $d^2+d+1\mid dj-i+j+d$. But, $dj-i+j+d<dj+j+d\leq d(d-1)+d-1+d = d^2+d-1 < d^2+d+1$, a contradiction.

          \item \textbf{$(\mathcal{J}_d,C)$ is special:} by Proposition \ref{chavechave} we know that there is a $\mathcal{J}_d$-invariant prime divisor $D$ such that $(\mathcal{J}_d,D)$ is not special \textbf{or} the automorphism $\Phi_d$ fixes the curve $C$. By the precedent case, it follows that for every $\mathcal{J}_d$-invariant prime divisor $D$ with $\deg(D)<\deg(\Delta_{\mathcal{J}_d})$ the pair $(\mathcal{J}_d,D)$ is special. So, it follows that $\Phi_d$ fixes $C$. Write $$\Delta = C+\sum_i \alpha_i P_i$$
          and observe that $P_i's$ are not $\mathcal{J}_d$-invariant for every $i$. Indeed, if $P_i$ is $\mathcal{J}_d$-invariant for some $i$, then the pair $(\mathcal{J}_d,P_i)$ is special. By Proposition \ref{speciallll} we conclude that $2 = m_q(\Delta_{\mathcal{J}_d})\geq m_q(C)+m_q(P_i) \geq 2+2$ for all $q\in \sing(\mathcal{F})$, a contradiction. In particular, by \cite[Proposition 3.8]{mendson2022foliations} we conclude that $\alpha_i \equiv 0 \mod p$ for every $i$ and the $p$-divisor has the form $\Delta_{\mathcal{J}_d} = C+pR$ with $(\mathcal{J}_d,C)$ special and $R = \sum_{i}\alpha_i P_i$ with $P_i$ not $\mathcal{J}_d$-invariant for every $i$.
    \end{itemize}
This ends the proof of the theorem.
\end{proof}

\begin{ex} Consider the Jouanolou foliation of degree $d\leq 100$ over characteristic $p = 5$. Using the computer algebra system \cite[Singular]{DGPS} we can check that the $5$-divisor is irreducible if $d\in \{2,14,24,54,59,69,89,99\}$. Otherwise, $\mathcal{J}_d$ is $5$-closed or $\Delta_{\mathcal{J}_d} = C+pR$ with structure given by the following table:

\begin{center}

 \begin{tabularx}{0.8\textwidth} {
  | >{\centering\arraybackslash}X
  | >{\centering\arraybackslash}X
  | >{\centering\arraybackslash}X
  | >{\centering\arraybackslash}X
  | >{\centering\arraybackslash}X|
  }
   \hline
  $d$  & $d^2+d+1$ & $\deg(C)$ &  $l$ &  $R$ \\\hline
  $6$ & $43$   &   $18$ &    $2$   & $\{xyz=0\}$\\\hline
  $12$ & $157$  &   $39$ &    $5$  & $\{xyz=0\}$\\\hline
  $17$ & $307$  &   $54$ &    $7$  & $\{xyz=0\}$ \\\hline
  $21$ & $463$  &   $63$  &    $8$  &  $\{xyz=0\}$\\\hline
  $27$ & $757$  &   $84$  &    $11$  &  $\{xyz=0\}$\\\hline
  $41$ & $1723$ & $123$ &   $16$   &  $\{xyz=0\}$\\\hline
  $57$ & $3307$ & $174$  &   $23$  &  $\{xyz=0\}$\\\hline
  $62$ & $3907$ & $189$  &   $25$  &  $\{xyz=0\}$\\\hline
  $66$ & $4423$ &  $198$ &   $26$  &  $\{xyz=0\}$\\\hline
  $71$ & $5113$ & $213$ &   $28$  &  $\{xyz=0\}$\\\hline
  $77$ & $6007$ & $234$  &   $31$  &  $\{xyz=0\}$\\\hline

  \end{tabularx}

 \end{center}
where $(\mathcal{J}_d,C)$ is a special pair of degree $(d,5l+d+2)$. Note that $p$ and $d$ satisfy the conditions of the Theorem \ref{irredutibilidade}.
\end{ex}

\begin{cor}\label{respostaproblemII} In the conditions of Theorem \ref{irredutibilidade} the Jouanolou foliation on the projective plane $\Projdois$ has an unique irreducible invariante curve of degree $pl+d+2$ with $0<l\leq d-1$.
\end{cor}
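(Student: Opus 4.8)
The plan is to read the statement off directly from the dichotomy of Theorem~\ref{irredutibilidade}, so the argument is essentially bookkeeping on its two alternatives together with the fundamental property of the $p$-divisor. Under the hypotheses of Theorem~\ref{irredutibilidade} the foliation $\mathcal{J}_d$ is not $p$-closed by Lemma~\ref{Jounaopfechada}, so $\Delta_{\mathcal{J}_d}$ is defined and has degree $p(d-1)+d+2$, and Theorem~\ref{irredutibilidade} tells me that either $\Delta_{\mathcal{J}_d}$ is irreducible, or $\Delta_{\mathcal{J}_d}=C+pR$ with $(\mathcal{J}_d,C)$ special and each prime component $P_i$ of $R$ non-invariant.

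First I would dispose of the irreducible case: here $\Delta_{\mathcal{J}_d}=C$ is a single reduced curve with $\ord_{\Delta_{\mathcal{J}_d}}(C)=1$, so $1\not\equiv 0\bmod p$ and \cite[Proposition 3.8]{mendson2022foliations} forces $C$ to be $\mathcal{J}_d$-invariant; its degree $p(d-1)+d+2$ has the form $pl+d+2$ with $l=d-1$, which lies in the range $0<l\leq d-1$ since $d>2$. In the split case $C$ is $\mathcal{J}_d$-invariant by the very definition of a special pair (Definition~\ref{parspecial}), and I would extract the range of $l$ from the degree identity $\deg(C)+p\deg(R)=p(d-1)+d+2$: substituting $\deg(C)=pl+d+2$ yields $l+\deg(R)=d-1$, whence $0<l\leq d-1$.

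Finally I would establish uniqueness uniformly, using that every irreducible invariant curve appears in the support of $\Delta_{\mathcal{J}_d}$ by \cite[Proposition 3.8]{mendson2022foliations}. In the irreducible case this support consists of the single curve $C$; in the split case it is $C$ together with the $P_i$, and as Theorem~\ref{irredutibilidade} guarantees that none of the $P_i$ is invariant, $C$ remains the only invariant member of the support. Hence in both cases $\mathcal{J}_d$ has exactly one irreducible invariant curve, namely $C$, of degree $pl+d+2$ with $0<l\leq d-1$. I do not expect a genuine obstacle here; the only points demanding care are the degree comparison that confines $l$ to the interval $0<l\leq d-1$ and the appeal to \cite[Proposition 3.8]{mendson2022foliations} to rule out invariant curves lying outside the support of the $p$-divisor.
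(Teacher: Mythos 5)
Your proposal is correct and matches the paper's (implicit) derivation: the corollary is stated without a separate proof precisely because it follows, as you argue, from the dichotomy of Theorem~\ref{irredutibilidade} combined with \cite[Proposition 3.8]{mendson2022foliations} (invariant curves lie in the support of $\Delta_{\mathcal{J}_d}$, and a component of multiplicity $1\not\equiv 0 \bmod p$ is invariant) and the degree bookkeeping $\deg\Delta_{\mathcal{J}_d}=p(d-1)+d+2$, giving $l=d-1$ in the irreducible case and $l+\deg R=d-1$ in the split case. Your uniqueness argument via the non-invariance of the $P_i$ is exactly the intended one, so there is nothing to add.
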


\begin{OBS} It is expected that for infinitely many of $d\in \mathbb{Z}_{>0}$
the integer $d^2+d+1$ is a prime number. This is a particular case of the Bunyakovsky conjecture which says that, under some conditions, given a polynomial $p(x) \in \mathbb{Z}[x]$ we have $p(d)$ prime for infinitely many $d\in \mathbb{Z}_{>0}$ (see \cite{438807}).
\end{OBS}

\section{Non-algebraicity via reduction modulo \texorpdfstring{$2$}{2}} \label{pdivisorSigma}

In this section, we prove a particular case of a well-known theorem of Jouanolou which says that a very generic foliation of degree $d>1$ on the projective plane does not have any algebraic solutions. In this section, we present a simple proof of this result in the case where $d$ is odd with $d\not \equiv 1 \mod 3$. The main topic in this proof is that we use reduction modulo two and some irreducibility criterion for polynomials in two variables via Newton polytopes due to S.Gao.

Let $\mathcal{S} = \{P_1,\ldots,P_n\}$ be a collection of points on $\mathbb{R}^2$. Recall that the \textbf{convex hull} of $\mathcal{S}$ denoted by $\langle\mathcal{S}\rangle = \langle P_1,P_2,\ldots,P_{n-1},P_n\rangle$ is the smallest convex subset of $\mathbb{R}^2$ which contains $S$. This is equivalent to say that $\langle\mathcal{S}\rangle $ is the intersection of all convex subsets of $\mathbb{R}^2$ which contains $S$. It is possible to check that
$$
\langle\mathcal{S}\rangle = \{t_{1}P_{1}+t_2P_2+\cdots +t_{n-1}P_{n-1}+t_nP_n\in \mathbb{R}^2\mid \sum_{i}t_i=0\}.
$$
Let $\corpo$ be a field and $f(x,y) = \sum_{i,j}a_{ij}x^iy^j \in \corpo[x,y]$ a polynomial. Recall that the \textbf{Newton polytope} associated to $f(x,y)$ is the convex hull os the set $\mathcal{S}(f) = \{(i,j)\in \mathbb{Z}^2\mid a_{ij}\neq 0\}$.

\begin{ex} Let $f(x,y) = x^{2}y+xy^{4}+y^{2}+xy^{2}\in \corpo[x,y]$. In this case, $\mathcal{S}(f) = \{(2,1),(1,4),(0,2),(1,2)\}$. The Newton polytope associated to $f$ is the triangle with vertices $A = (2,1)$, $B = (1,4)$ and $C = (1,2)$.

\centering
\includegraphics[width=5cm , height=3cm]{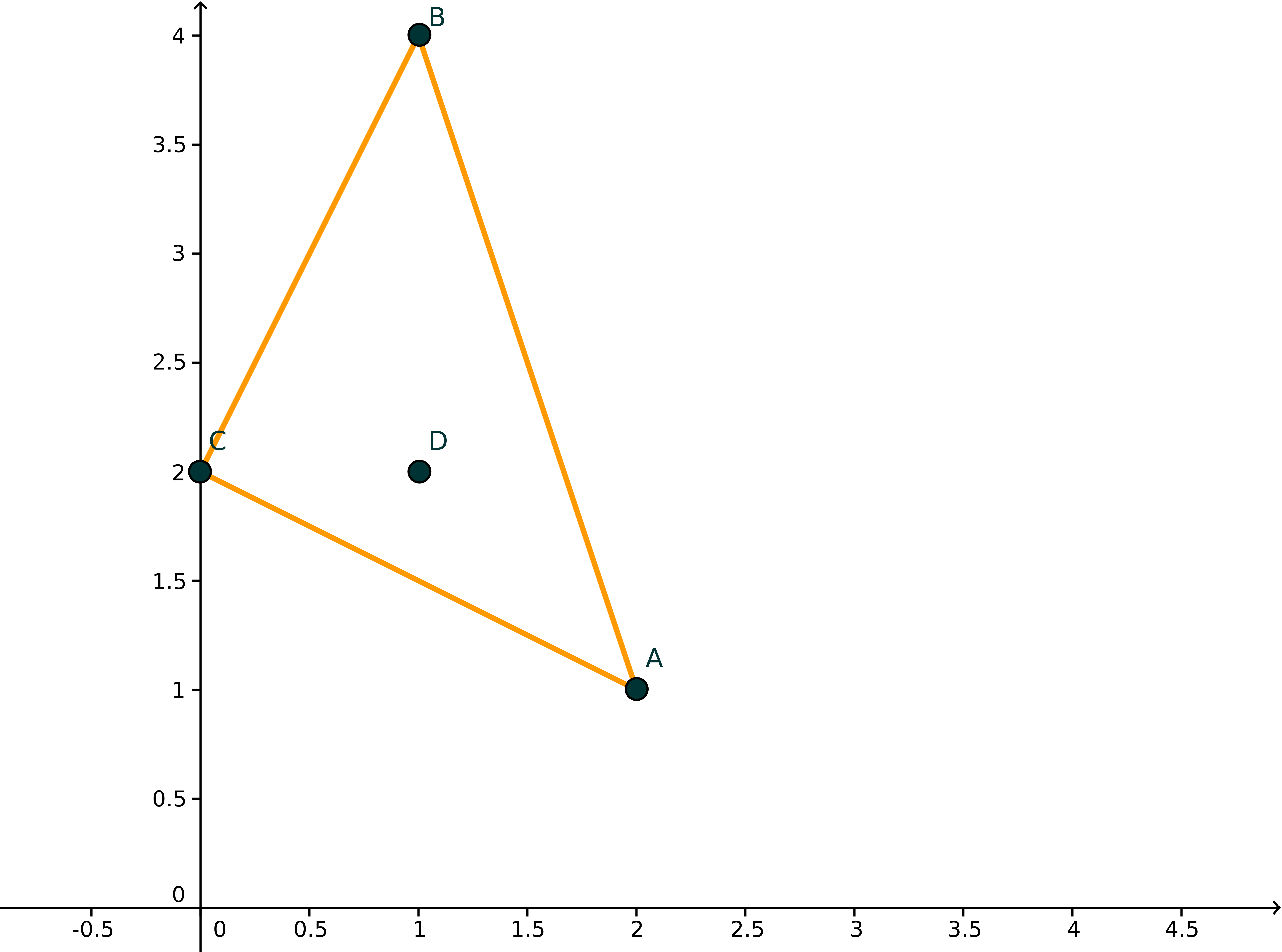}
\end{ex}

We recall the following theorem from \cite{MR1816701}.

\begin{thmGao} \label{gaoooo} Let $\corpo$ be a field and $f = ax^{m}+by^{n}+\sum_{i,j}c_{ij}x^{i}y^{j} \in \corpo[x,y]$ with $a,b \in \corpo^{*}$ and $(i,j)$ different from $(m,0), (0,n)$. Suppose that the Newton polytope of $f$ is contained in the triangle with vertices $(m,0),(0,n)$ and $(u,v)$ for some point $(u,v) \in \mathbb{R}^2$. If $gcd(m,n)=1$ then $f$ is absolutely irreducible over $\corpo$.
\end{thmGao}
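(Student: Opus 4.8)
The plan is to prove absolute irreducibility through the Newton polytope, using the classical multiplicativity theorem of Ostrowski: if $f = gh$ in $\overline{\corpo}[x,y]$ then $N(f) = N(g) + N(h)$, the Minkowski sum of the Newton polytopes $N(g) = \langle\mathcal{S}(g)\rangle$ and $N(h) = \langle\mathcal{S}(h)\rangle$. It then suffices to show that $N(f)$ is \emph{integrally indecomposable}, i.e.\ cannot be written as a Minkowski sum of two lattice polygons each having more than one vertex. I would first dispose of the trivial case: since $f$ contains the pure powers $ax^m$ and $by^n$ with $a,b\in\corpo^{*}$, any monomial dividing $f$ must be a nonzero constant, so in a nontrivial factorization $f = gh$ neither $N(g)$ nor $N(h)$ is a single point.

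The heart of the argument is the edge $e = \langle(m,0),(0,n)\rangle$. Since $N(f) \subseteq T$, the triangle with vertices $(m,0),(0,n),(u,v)$, and both $(m,0),(0,n)$ lie in $N(f)$, the segment $e$ is a genuine edge (a $1$-dimensional face) of $N(f)$: the supporting line of this edge of $T$ meets $N(f)$ exactly along $e$. The hypothesis $\gcd(m,n)=1$ says precisely that $e$ has no interior lattice point, i.e.\ its lattice length is $1$. Writing $N(f) = A + B$ with $A=N(g)$, $B=N(h)$ and taking $\nu$ an outer normal to $e$, the extreme faces satisfy $\mathrm{face}_\nu(N(f)) = \mathrm{face}_\nu(A) + \mathrm{face}_\nu(B)$ with lattice lengths adding; as the length of $e$ is $1$, one summand, say $B$, has $\mathrm{face}_\nu(B) = \{r\}$ a single vertex. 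Then both endpoints of $e$ decompose through $r$, so $(m,0)-r \in A$ and $(0,n)-r \in A$. Since $A$ lies in the first quadrant, writing $r=(r_1,r_2)$ the inequalities $m-r_1\ge 0$, $-r_2\ge 0$, $-r_1\ge 0$, $n-r_2\ge 0$ force $r=(0,0)$. Hence $(0,0)\in B$ and $\{(m,0),(0,n)\}\subseteq A$.

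To finish I want to conclude $B = \{(0,0)\}$. For any $\beta \in B$ the points $(m,0)+\beta$ and $(0,n)+\beta$ lie in $A+B = N(f) \subseteq T$. The key geometric lemma is that for a bounded triangle $T$ the intersection of the tangent cones at two distinct vertices is $\{0\}$: writing $T = \bigcap_{i=0}^{2}\{\langle a_i,\cdot\rangle \le c_i\}$, the tangent cones at $(m,0)$ and at $(0,n)$ are each cut out by two of the three inequalities, so their intersection is $\{\delta : \langle a_i,\delta\rangle \le 0,\ i=0,1,2\}$, which is $\{0\}$ because the edge normals $a_0,a_1,a_2$ of a bounded triangle positively span $\mathbb{R}^2$. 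Since $(m,0)+\beta\in T$ and $(0,n)+\beta\in T$ force $\beta$ into both tangent cones, we get $\beta=0$, whence $B=\{(0,0)\}$, contradicting that $B$ is not a single point; therefore $f$ is absolutely irreducible. The step I expect to demand the most care is this last one: the location of the third vertex $(u,v)$ controls whether $e$ is a maximal or a minimal face of $N(f)$, and a naive coordinate bound breaks down when $(u,v)$ sits far to the northeast. It is exactly the triangle tangent-cone lemma, equivalently the boundedness of $T$, that makes the argument independent of this orientation.
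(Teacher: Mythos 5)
Your proof is correct, but note that the paper does not actually prove this statement at all: its ``proof'' is the single line ``see [Gao, Corollary 4.12]'', deferring entirely to Gao's paper. What you have written is essentially a self-contained reconstruction of Gao's own method: absolute irreducibility via Ostrowski's multiplicativity $N(gh)=N(g)+N(h)$ of Newton polytopes, reduced to integral indecomposability of $N(f)$. Your verification is sound at every step I checked: $e=\langle(m,0),(0,n)\rangle$ is genuinely a face of $N(f)$ because the supporting line of $T$ through these two points meets $N(f)$ exactly in $e$; the face decomposition $\mathrm{face}_\nu(A+B)=\mathrm{face}_\nu(A)+\mathrm{face}_\nu(B)$ together with $\gcd(m,n)=1$ (lattice length one) forces one summand's $\nu$-face to be a single point $r$; the first-quadrant constraints on Newton polytopes correctly pin $r=(0,0)$; and the tangent-cone/recession-cone lemma (the outer normals of a bounded triangle positively span $\mathbb{R}^2$) is exactly the right tool to kill every $\beta\in B$ regardless of where $(u,v)$ sits --- your closing remark that a naive coordinate bound fails when $(u,v)$ lies far to the northeast is well taken, and this is where a sloppier argument would break. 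Two micro-points you leave implicit: if $(u,v)$ is collinear with $(m,0)$ and $(0,n)$, the ``triangle'' degenerates to a segment, but then $\gcd(m,n)=1$ forces $f=ax^m+by^n$ and the same lattice-length argument on the segment finishes it; and the lattice lengths of $\mathrm{face}_\nu(A)$ and $\mathrm{face}_\nu(B)$ add precisely because both faces are lattice segments (or points) parallel to $e$, which deserves a sentence. Compared with the paper, you trade a one-line citation for a complete and correct proof; compared with Gao's original treatment, which derives Corollary 4.12 from a general indecomposability criterion, your direct argument for this specific triangle configuration is leaner and arguably more transparent.
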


\begin{proof} see \cite[Corollary 4.12]{MR1816701}.
\end{proof}

As a particular case, we have the following corollary.

\begin{cor} \label{2divisor} Let $\corpo$ be any field, $d \in \mathbb{Z}_{>1}$  and consider the following polynomial of degree $d$: $f(x,y) = x^{d-1}+y^{2d+1}+x^{d}y^{d}+x^{2d+1}y^{d-1}\in \corpo[x,y]$. Then $f$ is absolutely irreducible.
\end{cor}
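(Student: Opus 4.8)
The plan is to read off the Newton polytope of $f$ and then invoke the Proposition of Gao recalled above. The support of $f$ is the set
\[
\mathcal{S}(f)=\{(d-1,0),\ (0,2d+1),\ (d,d),\ (2d+1,d-1)\},
\]
coming from the monomials $x^{d-1}$, $y^{2d+1}$, $x^{d}y^{d}$ and $x^{2d+1}y^{d-1}$ respectively. I would first check that the Newton polytope is the triangle $T$ with vertices $(d-1,0)$, $(0,2d+1)$ and $(2d+1,d-1)$, the remaining exponent $(d,d)$ lying in its interior.

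To verify that $(d,d)$ is interior it suffices to write it as a convex combination $\alpha(d-1,0)+\beta(0,2d+1)+\gamma(2d+1,d-1)$ with $\alpha+\beta+\gamma=1$ and $\alpha,\beta,\gamma>0$; the resulting linear system in $\alpha,\beta,\gamma$ is solved by strictly positive weights for every $d\geq 2$ (equivalently, $(d,d)$ lies on the interior side of each of the three edge lines, a routine sign check). With $T$ identified, I would apply the Proposition of Gao with $m=d-1$, $n=2d+1$ and $(u,v)=(2d+1,d-1)$: the coefficients $a=b=1$ lie in $\corpo^{*}$, the two further exponents $(d,d)$ and $(2d+1,d-1)$ are distinct from $(m,0)$ and $(0,n)$, and $T$ is exactly the prescribed triangle with vertices $(m,0),(0,n),(u,v)$.

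The one remaining hypothesis is $\gcd(m,n)=1$, and this is where the essential content — and the main obstacle — lies. Since $2d+1=2(d-1)+3$, we have $\gcd(m,n)=\gcd(d-1,2d+1)=\gcd(d-1,3)$, so Gao's criterion applies verbatim and yields absolute irreducibility precisely when $d\not\equiv 1\pmod 3$; this is exactly the arithmetic range in which the statement is used (cf. Theorem~\ref{TeoremaC}). The difficulty is the class $d\equiv 1\pmod 3$: there all three edges of $T$ acquire lattice length $3$, so $T$ is integrally decomposable and the polytope test no longer forces irreducibility. I would then exploit the fact that the three vertex exponents become divisible by $3$ while the interior exponent $(d,d)$ does \emph{not} — so $f$ is not a polynomial in $x^{3},y^{3}$ — and argue directly that the stray monomial $x^{d}y^{d}$ obstructs any nontrivial factorization. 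Making this last step rigorous is the delicate point; if one instead simply carries the standing hypothesis $d\not\equiv 1\pmod 3$ from Theorem~\ref{TeoremaC}, the corollary follows immediately from Gao's proposition.
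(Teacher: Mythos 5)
Your proposal coincides with the paper's proof step for step: the same triangle $\langle (d-1,0),(0,2d+1),(2d+1,d-1)\rangle$, the same observation that $(d,d)=\tfrac{1}{3}\left((d-1,0)+(0,2d+1)+(2d+1,d-1)\right)$ and hence lies in that triangle (membership suffices here; interiority is not actually needed to identify the Newton polytope), and the same reduction $\gcd(d-1,2d+1)=\gcd(d-1,3)$ before invoking Gao's proposition with $m=d-1$, $n=2d+1$, $(u,v)=(2d+1,d-1)$. The discrepancy you flag is genuine, and it sits in the paper rather than in your argument: the corollary as stated omits the hypothesis $d\not\equiv 1\pmod 3$, yet the paper's own proof explicitly invokes that condition to conclude $\langle d-1,2d+1\rangle=\mathbb{Z}$, so the case $d\equiv 1\pmod 3$ is not established there either. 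Your sketched remedy for that case (observing $x^{d}y^{d}\notin\corpo[x^{3},y^{3}]$ while the three vertex exponents are divisible by $3$) is, as you yourself note, not a proof --- failure of the support to live in $\corpo[x^{3},y^{3}]$ does not by itself obstruct a factorization, and indeed the polytope becomes integrally decomposable in that class, so the lattice test gives nothing. The correct repair is the one you end with: add the hypothesis $d\not\equiv 1\pmod 3$ to the statement, which is harmless since the corollary is only deployed under the standing assumptions of Theorem \ref{TeoremaC}, where that congruence is assumed.
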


\begin{proof} Let $A = (d-1,0)$, $B = (0,2d+1)$, $C = (d,d)$ and $D = (2d+1,d-1)$ be points in $\mathbb{R}^2$ and consider its convex hull: $\textbf{R}:= \langle A,B,C,D \rangle \subset \mathbb{R}^2$. Recall that by definition this is the set:
$
\mbox{\textbf{R}} = \{aA+bB+cC+dD\mid a+b+c+d = 1\}.
$
The convex hull of the points $A,B,D$ is the triangle with vertices $A,B,C$. Denote this triangle by $\textbf{T} := \langle A,B,D\rangle$.
We claim that $\textbf{R} = \textbf{T}$. Indeed, it is sufficient to show that $C \in \textbf{T}$. Note that,
$$
C = (d,d) = \frac{1}{3}(d-1,0)+\frac{1}{3}(0,2d+1)+\frac{1}{3}(2d+1,d-1) \in \textbf{T}.
$$

Now, consider the ideal generated $I = \langle d-1,2d+1\rangle \subset \mathbb{Z}$. Note that the condition
$d\not\equiv1 \mod 3$ implies that the ideal $I = \mathbb{Z}$. Indeed, we have:
$
I = \langle d-1,2d+1\rangle = \langle d-1,2d+1-(d-1) \rangle = \langle d-1,d+2\rangle = \langle d-1,3\rangle = \mathbb{Z}.
$
In particular, by the proposition of S.Gao we conclude that the polynomial $f$ is absolutely irreducible.
\end{proof}

\begin{prop} \label{2Jou} Let $\corpo$ be a algebraically closed field of characteristic two and $\mathcal{J}_d$ be the Jouanolou foliation of degree $d>1$ on $\Projdois$. If $d\equiv 0 \mod 2$ then $\mathcal{J}_d$ is $2$-closed and if $d\equiv 1 \mod 2$ then $\mathcal{J}_d$ is not $2$-closed with irreducible $2$-divisor.
\end{prop}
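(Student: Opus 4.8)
The plan is to reduce both cases to one explicit homogeneous vector field tangent to $\mathcal{J}_d$ and to compute its square as a derivation in characteristic two. First I would differentiate the Jouanolou form $\omega = (x^dz-y^{d+1})dx+(xy^d-z^{d+1})dy+(z^dy-x^{d+1})dz$; a direct computation gives $d\omega = (d+2)\left(y^d\,dx\wedge dy + z^d\,dy\wedge dz + x^d\,dz\wedge dx\right)$, so the associated field is $v = z^d\partial_x + x^d\partial_y + y^d\partial_z$. I would then record the identity $i_v\omega = z^d(x^dz-y^{d+1})+x^d(xy^d-z^{d+1})+y^d(z^dy-x^{d+1}) = 0$, valid in every characteristic, so that $v$ is a lift of the tangent field of $\mathcal{J}_d$ regardless of whether $2\mid d+2$. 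Since a derivation is determined by its action on coordinates, the iterate is $v^{[2]}(x)=v(z^d)=dz^{d-1}y^d$ and similarly for $y,z$, hence $v^{[2]} = d\left(y^dz^{d-1}\partial_x + z^dx^{d-1}\partial_y + x^dy^{d-1}\partial_z\right)$.

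For $d$ even I would observe that $2\mid d+2$, so the global recipe normalising $v_\omega$ is unavailable, and I would instead invoke the intrinsic description of $p$-closedness in terms of a local tangent generator. In the affine chart with the given field $v_d = (xy^d-1)\partial_x-(x^d-y^{d+1})\partial_y$, a short computation in characteristic two (the terms carrying the factor $d$ vanish and $d+1\equiv 1$) yields $v_d^{[2]} = y^d\,v_d$. As $v_d^{[2]}$ is an $\mathcal{O}$-multiple of $v_d$ it is again tangent to $\mathcal{J}_d$, so $i_{v_d^{[2]}}(\omega|_{z=1}) = 0$ and $\mathcal{J}_d$ is $2$-closed; equivalently $v^{[2]}=0$ because $d\equiv 0$.

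For $d$ odd I would use that $2\nmid d+2$, so $v=v_\omega$ and $v^{[2]} = y^dz^{d-1}\partial_x + z^dx^{d-1}\partial_y + x^dy^{d-1}\partial_z$. Contracting with $\omega$ and collecting the three copies of $x^dy^dz^d$ with $3\equiv 1 \pmod 2$, I would obtain
\[
G := i_{v^{[2]}}\omega = x^dy^dz^d + x^{2d+1}y^{d-1} + x^{d-1}z^{2d+1} + y^{2d+1}z^{d-1},
\]
a nonzero homogeneous polynomial of degree $3d = 2(d-1)+d+2 = \deg\Delta_{\mathcal{J}_d}$; in particular $\mathcal{J}_d$ is not $2$-closed and $\Delta_{\mathcal{J}_d} = \{G=0\}$. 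To get irreducibility I would dehomogenise at $z=1$: since the monomial $x^{2d+1}y^{d-1}$ is free of $z$, we have $z\nmid G$ and the degree is preserved, and the dehomogenisation is exactly $g(x,y)=x^{d-1}+y^{2d+1}+x^dy^d+x^{2d+1}y^{d-1}$, the polynomial proved absolutely irreducible in Corollary \ref{2divisor}. Because $z\nmid G$ and $\deg g = \deg G$, irreducibility of $g$ forces irreducibility of $G$, hence of the $2$-divisor.

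The differentiation and the contraction are routine; the one delicate point is that for even $d$ the field $v_\omega$ is not defined by the global construction ($2\mid d+2$), so the $2$-closedness must be read off the affine tangent field through the intrinsic definition rather than from $i_{v_\omega^2}\omega$. The substantive input is Gao's criterion applied as in Corollary \ref{2divisor}: there the coprimality $\gcd(d-1,2d+1)=\gcd(d-1,3)=1$ along the edge joining $(d-1,0)$ and $(0,2d+1)$ is what guarantees absolute irreducibility, so this is the step where the arithmetic hypothesis $d\not\equiv 1\pmod 3$ (implicit in the cited corollary) enters.
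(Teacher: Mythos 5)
Your proposal is correct and takes essentially the same route as the paper: both arguments square the tangent vector field of $\mathcal{J}_d$ in characteristic two and reduce irreducibility of the resulting degree-$3d$ equation to Corollary \ref{2divisor}, the only cosmetic difference being that you contract $i_{v^{[2]}}\omega$ with the homogeneous lift on $\mathbb{A}^3$ while the paper computes $v^{2}(y)v(x)-v^{2}(x)v(y)$ directly in the chart $z=1$ (yielding the same polynomial after dehomogenization). Your extra observations are accurate and in fact slightly tidier than the paper's write-up: the factorization $v_d^{[2]}=y^{d}v_d$ makes the even case transparent, the check $z\nmid G$ justifies the passage from affine to projective irreducibility which the paper leaves implicit, and you correctly note that the hypothesis $d\not\equiv 1 \bmod 3$ is silently used in the proof of Corollary \ref{2divisor} (via $\gcd(d-1,2d+1)=\gcd(d-1,3)$), even though neither that corollary nor this proposition states it.
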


\begin{proof} The Jouanolou foliation of degree $d>1$ defined over a field $K$ is given on $D_{+}(z)\cong \mathbb{A}_{\corpo}^{2}$ by the following vector field
$
  v = (xy^{d}-1)\partial_x-(x^d-y^{d+1})\partial_y
$
and by computations we have:
$$
v^{2} = ((d+1)xy^{2d}-y^{d}-dx^{d+1}y^{d-1})\partial_x+((d+1)y^{2d+1}-(2d+1)x^dy^d+dx^{d-1})\partial_y
$$
and $D_{2}(v) = \Delta_{\mathcal{F}}|_{D_{+}(z)} =  v^{2}(y)v(x)-v^{2}(x)v(y)$ is given by
$$D_{2}(v) = (2d+2)xy^{3d+1}-(d+2)y^{2d+1}-(4d+2)x^{d+1}y^{2d}+(3d+2)x^{d}y^{d}+dx^{2d+1}y^{d-1}-dx^{d-1}.$$
In particular, if $K = \corpo$ we have that $\mathcal{J}_d$ is $2$-closed if $d\equiv 0 \mod 2$, since $D_2(v) = 0$ in that case. On the other hand, if $d\equiv 1\mod 2$ then $v\wedge v^2  = f(x,y)\partial_{x}\wedge \partial_{y}$ where $f(x,y)$ is the local equation to the $2$-divisor associated to $\mathcal{J}_d$ given by
$
f(x,y) = y^{2d+1}+x^{d}y^{d}+x^{2d+1}y^{d-1}+x^{d-1}
$
and by the Corollary \ref{2divisor} we conclude that the $2$-divisor is irreducible of  degree $3d$.
\end{proof}

\begin{lemma}\label{grauuuu} Let $\mathcal{J}_d$ be the Jouanolou foliation on the complex projective plane of degree $d\not\equiv 1\mod 3$. If $\mathcal{J}_d$ has a reduced $\mathcal{J}_d$-invariant algebraic curve then there is a $\mathcal{J}_d$-invariant reduced curve $D$ such that $\deg(D) = d+2$.
\end{lemma}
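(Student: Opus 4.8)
The plan is to exploit the order-$N$ automorphism $\Phi\in\Aut(\mathcal{J}_d)$, $N=d^2+d+1$, together with the theory of special pairs from Section~\ref{automorfismoIIII}, adapted to the complex setting. First I would record the geometry over $\mathbb{C}$: by the Baum--Bott computation in Lemma~\ref{Jounaopfechada} the common eigenvalue ratio $\alpha$ of $\mathcal{J}_d$ satisfies $\alpha+\alpha^{-1}+2=(d+2)^2/N$ with negative discriminant $-3d^2(d+2)^2/N^2$, so $\alpha\in\mathbb{C}\setminus\mathbb{R}$; in particular $\alpha\notin\mathbb{Q}$ and $\alpha\neq1$, the $N$ singularities are non-degenerate and pairwise distinct, and $\Phi$ (with $\Phi^{*}\omega=\gamma\omega$, $\gamma$ a primitive $N$th root of unity) permutes them.

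Given a reduced $\mathcal{J}_d$-invariant curve $C$, the key device is to replace $C$ by the reduced curve $D$ supported on the full orbit $\bigcup_{i=0}^{N-1}\Phi^{i}(C)$. Then $D$ is again reduced and $\mathcal{J}_d$-invariant, and crucially it is globally $\Phi$-invariant, i.e. $\Phi^{*}F=\lambda F$ for its defining polynomial $F$. I would then show that $(\mathcal{J}_d,D)$ is special. Writing $dF\wedge\omega=F\bigl(\tfrac{\deg D}{d+2}d\omega+i_{R}\beta\bigr)$ and applying $\Phi^{*}$, the relations $\Phi^{*}F=\lambda F$ and $\Phi^{*}\omega=\gamma\omega$ force $\Phi^{*}\beta=\gamma\beta$. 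Expanding $\beta=b\,dx\wedge dy\wedge dz$ with $b=\sum b_{ij}x^{j}y^{i-j}z^{d-1-i}$ and reading off the $\Phi$-weights exactly as in the proof of Theorem~\ref{irredutibilidade}, each nonzero $b_{ij}$ would require $N\mid dj+j-i+d$; but $1\le dj+j-i+d\le d^2<N$, so $b=0$ and the pair is special. Unlike Theorem~\ref{irredutibilidade}, here I only ever use the exponent $l=1$ (because $\Phi$ itself fixes $D$), so no primality of $N$ is needed.

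Next I would transport Proposition~\ref{speciallll} and Corollary~\ref{grauspecial} to characteristic zero. At a singularity $q=[0:0:1]$ with $\omega=y\,dx+\alpha x\,dy+O(2)$, comparing the lowest-order terms of the special relation $(d+2)\,df\wedge\omega=\deg(D)\,f\,d\omega$ gives, for the initial form $f_m=\sum_{i+j=m}c_{ij}x^iy^j$, the equations $(d+2)(\alpha i-j)=\deg(D)(\alpha-1)$ for every $c_{ij}\neq0$. Since $\alpha$ is irrational this forces $(d+2)i=(d+2)j=\deg(D)$, so $f_m=c\,(xy)^{s}$ with $s=\deg(D)/(d+2)$; hence $\sing(\mathcal{J}_d)\subset D$ and $m_q(D)=2s$ at each of the $N$ singularities.

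Finally I would pin the degree using local separatrix theory. Because each singularity is non-degenerate with $\alpha\notin\mathbb{Q}_{>0}$, it is non-resonant, so $\mathcal{J}_d$ is formally linearizable there and admits exactly the two smooth separatrices $\{x=0\}$ and $\{y=0\}$; these are the only germs of invariant curve. As $D$ is reduced, its germ at $q$ lies inside these two smooth branches, whence $m_q(D)\le2$; combined with $m_q(D)=2s$ this yields $s=1$, so $\deg D=d+2$, as required. The main obstacle I anticipate is precisely this last step: making rigorous that reducedness caps the local multiplicity at two (that is, controlling \emph{all} invariant curve germs at an irrational-eigenvalue singularity), and checking that the complex analogues of Proposition~\ref{speciallll} and Corollary~\ref{grauspecial} go through with the irrationality of $\alpha$ replacing the $p$-reduced hypothesis. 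By contrast, the weight computation establishing that $(\mathcal{J}_d,D)$ is special is routine once the passage to the $\Phi$-invariant orbit curve reduces everything to the case $l=1$.
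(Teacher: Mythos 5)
Your proposal is correct and follows essentially the same route as the paper: the paper likewise replaces $C$ by its reduced $\Aut(\mathcal{J}_d)$-orbit curve $D$, asserts that $(\mathcal{J}_d,D)$ is special by citing \cite[Lemma 3]{MR2156709} (which is exactly your $\Phi$-weight computation with $l=1$, where indeed no primality of $d^2+d+1$ is needed), and then runs the argument of Corollary~\ref{grauspecial} over $\mathbb{C}$, deferring to the discussion after \cite[Lemma 2]{MR2156709} for the point you correctly flag as delicate, namely that at each singularity the non-real eigenvalue ratio $\alpha$ puts one in the Poincaré domain without resonances, so there are exactly two smooth transverse separatrices and reducedness forces $m_q(D)\le 2$, hence $s=1$ and $\deg(D)=d+2$. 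You have simply unpacked the two steps the paper outsources to its references, and your bounds (e.g. $1\le dj+j-i+d\le d^2<d^2+d+1$) check out.
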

\begin{proof} Let $D$ the curve obtained by conjugation of $C$ via $\Aut(\mathcal{J}_d)$. This curve is reduced and it is invariant by $\Aut(\mathcal{J}_d)$. By the properties of the automorphism group of $\mathcal{J}_d$, it follows that the pair $(\mathcal{J}_d,D)$ is special (see \cite[Lemma 3]{MR2156709}). The argument now is very similar to the argument on Corollary \ref{grauspecial}, but with computations over the complex numbers (see also the argument after \cite[Lemma 2]{MR2156709}).
\end{proof}

Now we can present the main result of this section theorem.

\begin{thm} A very generic foliation of odd degree $d>1$ with $d\not\equiv 1 \mod 3$ on the complex projective plane does not have algebraic solutions.
\end{thm}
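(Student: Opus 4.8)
The plan is to deduce the ``very generic'' statement from the single example of the Jouanolou foliation, via the elementary principle that one foliation with no algebraic solution makes the generic one have none. First I would record that for each $e\geq 1$ the locus $S_e$ of degree-$d$ foliations admitting an invariant curve of degree $e$ is Zariski closed in the projective space $\mathbb{P}(\h(\mathbb{P}_{\mathbb{C}}^2,\Omega^1(d+2)))$ of foliations. This is proved exactly as in Proposition \ref{globalzao}: the incidence set $\{(\omega,[F],[\Theta]) : dF\wedge\omega = F\Theta\}$ inside the product of the three projective spaces is closed, and projecting away the factors $[F]$ and $[\Theta]$ (a proper morphism) yields the closed image $S_e$. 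Hence the set of foliations possessing some algebraic solution is the countable union $\bigcup_{e\geq 1}S_e$, and to prove the theorem it suffices to exhibit a single degree-$d$ foliation lying outside every $S_e$: then each $S_e$ is a proper closed subset, and any foliation in the complement of $\bigcup_e S_e$ has no algebraic solution, which is exactly the assertion that the very generic foliation of degree $d$ has none.

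The required example is the Jouanolou foliation $\mathcal{J}_d$, which has foliation-degree $d$, so the heart of the argument is to show that $\mathcal{J}_d$ on $\mathbb{P}_{\mathbb{C}}^2$, for odd $d>1$ with $d\not\equiv 1\bmod 3$, has no algebraic solution. Suppose for contradiction it has one, i.e.\ a reduced invariant curve; Lemma \ref{grauuuu} (this is where $d\not\equiv 1\bmod 3$ first enters) upgrades this to a reduced invariant curve $D=\{F=0\}$ of degree \emph{exactly} $d+2$. Since $\mathcal{J}_d$ is defined over $\mathbb{Q}$, the variety of degree-$(d+2)$ invariant curves is defined over $\mathbb{Q}$ and nonempty over $\mathbb{C}$, hence has a $\overline{\mathbb{Q}}$-point, so I may take $F$ with coefficients in a number field $K$. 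Localizing $\mathcal{O}_K$ at a place $\mathfrak{q}\mid 2$ gives a DVR, in which I would normalize $F$ to be primitive; then $F\bmod\mathfrak{q}$ is a nonzero form of degree $d+2$, and $D_{\mathfrak{q}}=\{F\bmod\mathfrak{q}=0\}$ is invariant by the characteristic-two Jouanolou foliation $\mathcal{J}_d\otimes\overline{\mathbb{F}}_2$, because the relation $dF\wedge\omega=F\Theta$ is a polynomial identity stable under reduction.

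Now I would invoke Proposition \ref{2Jou}: for odd $d$ (with $d\not\equiv 1\bmod 3$) the foliation $\mathcal{J}_d$ in characteristic two is not $2$-closed and its $2$-divisor $\Delta$ is irreducible of degree $p(d-1)+d+2 = 3d$. Every irreducible component of $D_{\mathfrak{q}}$ is itself invariant (each irreducible factor of $F\bmod\mathfrak{q}$ divides its own $dG\wedge\omega$, as one reads off the logarithmic derivative), hence by \cite[Proposition 3.8]{mendson2022foliations} lies in the support of $\Delta$; but that support is a single irreducible curve of degree $3d$, so every component of $D_{\mathfrak{q}}$ has degree $3d$. This forces $\deg D_{\mathfrak{q}}\geq 3d$, contradicting $\deg D_{\mathfrak{q}} = d+2 < 3d$, the inequality holding precisely because $d>1$. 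Therefore $\mathcal{J}_d$ has no algebraic solution, and by the first paragraph the very generic foliation of degree $d$ has none.

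The conceptual point, and the only place needing care, is that the usual passage from positive characteristic to characteristic zero (Proposition \ref{local_global}) requires irreducibility of the $p$-divisor for infinitely many primes, whereas here I use only the single prime $2$. This is legitimate because the characteristic-two $2$-divisor has the \emph{fixed} degree $3d$, incompatible with the degree $d+2$ that Lemma \ref{grauuuu} forces on an invariant curve; so one well-chosen place over $2$ already yields the contradiction and no density statement is needed. I expect the genuine content to sit entirely in the inputs already established (Lemma \ref{grauuuu} and Proposition \ref{2Jou}), while the remaining steps — closedness of $S_e$, preservation of invariance and degree under reduction, and invariance of the components of $D_{\mathfrak{q}}$ — are routine.
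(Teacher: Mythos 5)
Your overall route is the paper's own: everything rests on Lemma \ref{grauuuu} (an algebraic solution forces a reduced invariant curve of degree exactly $d+2$) and Proposition \ref{2Jou} (irreducible $2$-divisor of degree $3d$), played against each other after reduction at a single place over $2$. But one step fails as written: the parenthetical claim that \emph{every} irreducible component of $D_{\mathfrak{q}}$ is invariant ``as one reads off the logarithmic derivative''. In characteristic $2$ the logarithmic-derivative argument only controls factors of \emph{odd} multiplicity: writing $F \bmod \mathfrak{q} = \prod_i G_i^{a_i}$, the relation $dF\wedge\omega = F\Theta$ yields, for each $i$, that $G_i$ divides $a_i\bigl(\prod_{j\neq i}G_j\bigr)\,dG_i\wedge\omega$, which says nothing when $a_i \equiv 0 \bmod 2$. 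Concretely, $F \equiv G^2$ satisfies $dF\wedge\omega = 0 = F\cdot 0$ for an arbitrary $G$, so an even-multiplicity factor of the reduction need not be invariant and need not lie in the support of $\Delta$; reducedness of the curve over $\mathbb{C}$ does not survive reduction modulo $2$. Hence your chain ``every component is invariant, hence lies in $\mathrm{supp}\,\Delta$, hence has degree $3d$, hence $\deg D_{\mathfrak{q}}\geq 3d$'' is unjustified at its first link.

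The gap is local and repairable with material you already have: $\deg(F\bmod\mathfrak{q}) = d+2$ is \emph{odd} (this is where the hypothesis $d$ odd is used a second time, not only for non-$2$-closedness in Proposition \ref{2Jou}), so $F\bmod\mathfrak{q}$ cannot be a square and at least one irreducible factor occurs with odd multiplicity; for that factor the logarithmic-derivative argument is valid, it is $\mathcal{F}_d$-invariant, hence lies in the support of the irreducible $2$-divisor and must coincide with the degree-$3d$ curve --- impossible, since it divides a form of degree $d+2 < 3d$. This is precisely the role of the paper's remark that $\deg(\tilde{Q}) = d+2 \equiv 1 \bmod 2$ and its phrase ``$\tilde{Q}$ is not a $2$-factor''. (Note also that only one invariant component is needed, so your ``every component'' claim was stronger than necessary.) The remaining steps --- closedness of each $S_e$, descending to a number field via a $\overline{\mathbb{Q}}$-point of the invariant-curve variety instead of the paper's Galois product $\prod_{\sigma}F^{\sigma}$, and the primitivity normalization at the place over $2$ --- are sound and only cosmetically different from the paper's proof.
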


\begin{proof} It is sufficient to show that the Jouanolou foliation has no algebraic solution. Let $\mathcal{J}_d$ be the Jouanolou foliation with $d\equiv 1 \mod 2$ and $d\not\equiv 1\mod 3$. By the Proposition \ref{2Jou} we know that the foliation $\mathcal{F}_d := \mathcal{J}_d\otimes \mathbb{F}_2$ is not $2$-closed with $2$-divisor $\Delta_{\mathcal{F}_d}$ irreducible. Suppose by contradiction that there is $C = \{F = 0\}$ an irreducible invariant curve defined by a reduced polynomial $F \in \mathbb{C}[x,y,z]$. We can assume that $C$ is defined by
a irreducible polynomial over $\mathbb{Z}$. Indeed, let $K$ be the smallest field extension of $\mathbb{Q}$ where all coefficients
of $F$ are defined.
Clearing denominators we can assume
$F\in \mathcal{O}_{K}[x,y,z]$. Let $G_{K/\mathbb{Q}}$ the Galois group of $K/\mathbb{Q}$ and for $\sigma \in G_{K/\mathbb{Q}} $ denote by $F^{\sigma}$ the natural action of $\sigma$ on the coefficients of $F$. Consider the polynomial $H = \prod_{\sigma \in G_{K/\mathbb{Q}}}F^{\sigma}$. By this construction we conclude that $H \in\mathbb{Z}[x,y,z]$ and it is $\mathcal{J}_d$-invariant. In particular, some irreducible factor $Q$ of $H$ gives a $\mathcal{J}_d$-invariant curve defined over $\mathbb{Z}$. In particular, it is reduced over $\mathbb{C}$. By the Lemma \ref{grauuuu} we know that the conjugation of $Q$, via $\Aut(\mathcal{J}_d)$, defines a $\mathcal{J}_d$-invariant curve $\tilde{C} = \{\tilde{Q}=0\}$ such that $\deg(\tilde{Q}) = d+2\equiv 1 \mod 2$. Let $L$ the smallest field extension of $\mathbb{Q}$ such that all coefficients of $\tilde{Q}$ are defined. By clearing denominators we can assume that
$\tilde{Q}\in \mathcal{O}_{L}[x,y,z]$. Let $\mathfrak{m}\in \spm(\mathcal{O}_L)$ be a maximal ideal over the maximal ideal
$2\mathbb{Z}$, i.e. $2\mathbb{Z} = \mathfrak{m} \cap \mathbb{Z}$. Since $\tilde{Q}$ is not a $2$-factor and since reduction modulo $\mathfrak{m}$ preserves the invariance we can find $Q_2$ a irreducible fator of $\tilde{Q}\otimes \mathbb{F}_\mathfrak{m}$ which defines a $\mathcal{F}_d$-invariant curve. By the irreducibility of $\Delta_{\mathcal{F}_d}$ we conclude that $Q_2 = \Delta_{\mathcal{F}_d}$ and so $d+2 \geq \deg(Q_2) = \deg(\mathcal{F}_d) = 3d$, contradiction since $d>1$.
\end{proof}

\normalfont \noindent \textbf{Acknowledgements.} W.Mendson thanks J.V. Pereira for conversations about some topics related to Section \ref{pdivisorP1P1}. The author acknowledges support from CAPES/COFECUB.

\nocite{MR1440180}\nocite{MR3687427}\nocite{galindo2020foliations}\nocite{MR3328860}\nocite{mendson2022}

\nocite{MR1440180}\nocite{MR3687427}\nocite{galindo2020foliations}\nocite{MR3328860}

\bibliographystyle{siam}

\bibliography{annot}

\end{document}